\newtheorem{thm2}{Theorem}
\newtheorem{cor2}{Corollary}
\newtheorem{conj2}{Conjecture}
\newtheorem{prop2}{Proposition}
\newtheorem{thm}{Theorem}[section]
\newtheorem{cor}[thm]{Corollary}
\newtheorem{lem}[thm]{Lemma}
\newtheorem{prop}[thm]{Proposition}
\newtheorem{conj}[thm]{Conjecture}
\theoremstyle{definition}
\newtheorem{defn}[thm]{Definition}
\newtheorem{rmk}[thm]{Remark}
\newtheorem*{rmk2}{Remark}
\newtheorem{ques}[thm]{Question}
\DeclareMathOperator{\ch}{ch}
\DeclareMathOperator{\End}{End} 
\DeclareMathOperator{\Hom}{Hom}
\DeclareMathOperator{\Sym}{Sym}
\DeclareMathOperator{\prim}{prim}
\DeclareMathOperator{\st}{st}
\newcommand{\C}{\ensuremath\mathds{C}}
\newcommand{\R}{\ensuremath\mathrm{R}}
\newcommand{\N}{\ensuremath\mathds{N}}
\newcommand{\Z}{\ensuremath\mathds{Z}}
\newcommand{\Q}{\ensuremath\mathds{Q}}
\newcommand{\FF}{\ensuremath\mathcal{F}}
\newcommand{\h}{\ensuremath\mathfrak{h}}
\newcommand{\PP}{\ensuremath\mathds{P}}
\newcommand{\D}{\ensuremath\mathrm{D}}
\newcommand{\HH}{\ensuremath\mathrm{H}}
\newcommand{\CH}{\ensuremath\mathrm{CH}}
\newcommand{\id}{\ensuremath\mathrm{id}}
\newcommand{\Gr}{\ensuremath\mathrm{Gr}}
\newcommand{\Stab}{\ensuremath\mathrm{Stab}}
\renewcommand{\top}{\ensuremath\mathrm{top}}
\newcommand{\QQ}{\mathbb{Q}}
\newcommand{\NN}{\mathbb{N}}
\newcommand{\Ss}{\mathcal S}
\newcommand{\XX}{\mathcal X}
\newcommand{\YY}{\mathcal Y}
\newcommand{\MM}{\mathcal M}
\newcommand{\OO}{\mathcal O}
\newcommand{\codim}{\hbox{codim}}
\newcommand{\wt}{\widetilde}
\newcommand{\ima}{\hbox{Im}}
\newcommand{\rom}{\romannumeral}
\renewcommand{\1}{\mathop{\mathds{1}}\nolimits} 
\renewcommand{\bar}{\overline}
\newcommand{\cart}{\ar@{}[dr]|\square} 
\renewcommand{\tilde}{\widetilde}
	\def\MR#1{}
\newif\ifHideFoot
\newcommand{\Lie}[1]{}
\newcommand{\Charles}[1]{}
\newcommand{\Robert}[1]{}
\newcommand{\marg}[1]{\normalsize{{
			\color{red}\footnote{{\color{blue}#1}}}{\marginpar[\vskip
			-.25cm{\color{red}\hfill$\Rightarrow$\tiny\thefootnote}]{\vskip
				-.2cm{\color{red}$\Leftarrow$\tiny\thefootnote}}}}}
\newcommand{\Lie}[1]{\marg{(Lie) #1}}
\newcommand{\Charles}[1]{\marg{(Charles) #1}}
\newcommand{\Robert}[1]{\marg{(Robert) #1}}
\begin{document}

	\title[The generalized Franchetta conjecture for some HK varieties, II]{The generalized Franchetta conjecture\linebreak
		 for some hyper-K\"ahler varieties, II.}

	\author{Lie Fu}
	\address{Institut Camille Jordan, Universit\'e Claude Bernard Lyon 1, France}
	\address{IMAPP, Radboud University, Nijmegen, Netherlands}
	\email{fu@math.univ-lyon1.fr}
	
	\author{Robert Laterveer} 
	\address{CNRS - IRMA, Universit\'e de Strasbourg,
		France}
	\email{laterv@math.unistra.fr}  
	
	\author{Charles Vial}
	\address{Fakult\"at f\"ur Mathematik, Universit\"at Bielefeld, Germany} 
	\email{vial@math.uni-bielefeld.de}
	\thanks{2020 {\em Mathematics Subject Classification.} 14C25, 14C15, 14J42, 14J28, 14F08, 14J70, 14D20, 14H10}
	
	\thanks{{\em Key words and phrases.} Chow ring, motives, hyper-K\"ahler varieties, cubic hypersurfaces, Franchetta conjecture, Beauville--Voisin conjecture,  derived categories, stability conditions, Kuznetsov component, moduli space of sheaves, tautological ring.
	}
	\thanks{LF is supported by the Radboud Excellence Initiative programme.
	LF and RL are supported by the project FanoHK, grant ANR-20-CE40-0023 of the French Agence Nationale de la Recherche.}

	
	\begin{abstract} 
		We prove the generalized Franchetta conjecture for the locally complete family of hyper-K\"ahler  eightfolds constructed by Lehn--Lehn--Sorger--van Straten (LLSS). As a
		corollary, we establish the Beauville--Voisin conjecture for very general
		LLSS eightfolds. 
		The strategy consists in 
		reducing to the Franchetta property for
		 relative fourth powers of cubic fourfolds, by 
		using the recent description of LLSS eightfolds as moduli spaces of Bridgeland semistable objects in the Kuznetsov component of the derived category of cubic fourfolds, together with its generalization to the relative setting  due to Bayer--Lahoz--Macr\`i--Nuer--Perry--Stellari.
		As a by-product, we compute the Chow motive of the Fano variety of lines on a smooth cubic hypersurface in terms of the Chow motive of the cubic hypersurface.
	\end{abstract}
	
	\maketitle
	
		\vspace{-20pt}
	\section*{Introduction}

	\subsection*{The Franchetta property}
	Let $f: \mathcal X \to B$ be a smooth projective morphism between smooth schemes of finite type over the field of complex numbers. 
	For any fiber $X$ of $f$ over a closed point of~$B$, we define 
	$$\operatorname{GDCH}^*_B(X) := \operatorname{Im} \bigl(\CH^*(\mathcal X) \to \CH^*(X)\bigr),$$ the image of the Gysin restriction map. Here and in the sequel, Chow groups are always considered with \emph{rational} coefficients. 
	The elements of $\operatorname{GDCH}^*_B(X)$ are called the \emph{generically defined cycles} (with respect to the deformation family $B$) on 
	$X$.
	The morphism $f: \mathcal X \to B$ is said to satisfy the \emph{Franchetta property} for codimension-$i$ cycles if the restriction of the cycle class map 
	$$\operatorname{GDCH}^i_B(X) \to \HH^{2i}(X,\QQ)$$ is injective for all (or equivalently, for very general) fibers $X$. It is said to satisfy the Franchetta property if it satisfies the Franchetta property for codimension-$i$ cycles for all $i$. At this point, we note that if $B'$ is a smooth locally closed subscheme of $B$, then there is no \emph{a priori} implication between the Franchetta properties for $\XX \to B$ and for the restricted family $\XX_{B'}\to B'$\,: informally, $\operatorname{GDCH}^i_{B'}(X)$ is generated by more elements than $\operatorname{GDCH}^i_B(X)$\,; on the other hand, specializing to $B'$ creates new relations among cycles. However, if $B' \to B$ is a dominant morphism, the Franchetta property for $\XX_{B'} \to B'$ implies the Franchetta property for $\XX \to B$\,; see \cite[Remark~2.6]{FLV}.

The Franchetta property is a property about the generic fiber $\mathcal{X}_{\eta}$. Indeed it is equivalent to the condition that the composition 
$$\CH^{*}(\mathcal{X}_{\eta})\to\CH^{*}(\mathcal{X}_{\bar\eta})\to \HH^{*}(\mathcal{X}_{\bar\eta})$$ is injective, where the first map, which is always injective, is the pull-back to the geometric generic fiber and the second one is the cycle class map to some Weil cohomology of $\mathcal{X}_{\bar \eta}$.

	\subsection*{Hyper-K\"ahler varieties}
	It was first conjectured by O'Grady~\cite{OGradyK3} that the universal family of polarized K3 surfaces of given genus $g$ over the corresponding moduli space satisfies the Franchetta property. By using Mukai models, this was proved for certain families of K3 surfaces of low genus by Pavic--Shen--Yin~\cite{FranchettaK3}. By investigating the case of the Beauville--Donagi family~\cite{BD} of Fano varieties of lines on smooth cubic fourfolds, we were led in~\cite{FLV} to ask whether O'Grady's conjecture holds more generally for hyper-K\"ahler varieties\,:

	\begin{conj2}[Generalized Franchetta conjecture for hyper-K\"ahler varieties \cite{FLV}]
Let $\FF$ be the moduli stack of a locally
complete family of polarized hyper-K\"ahler varieties. Then the universal family $\XX \to \FF$ satisfies the Franchetta property.
	\end{conj2}
	
It might furthermore be the case that, for some positive integers $m$, the relative $m$-th powers $\XX^m_{/\FF} \to \FF$ satisfy the Franchetta property. This was proved for instance in the case $m=2$ in \cite{FLV} for the universal family of K3 surfaces of genus $\leq 12$ (but different from $11$) and for the Beauville--Donagi family of Fano varieties of lines on smooth cubic fourfolds.\medskip

	The first main object of study of this paper is about the locally complete family of hyper-K\"ahler eightfolds
	constructed by Lehn--Lehn--Sorger--van Straten~\cite{LLSvS}, subsequently
	referred to as LLSS eightfolds. An LLSS eightfold is constructed from the space of twisted cubic curves on a smooth cubic fourfold not containing a plane.
	The following result, which is the first main result of this paper, completes our previous work \cite[Theorem 1.11]{FLV} where the Franchetta property was established for 0-cycles and codimension-2 cycles on LLSS eightfolds.
	
	\begin{thm2}\label{thm2:FranchettaLLSS}
	The universal family of LLSS hyper-K\"ahler eightfolds over the moduli space of smooth cubic fourfolds not containing a plane satisfies the Franchetta property. 
	\end{thm2}

As already observed in \cite[Proposition~2.5]{FLV}, the generalized Franchetta conjecture for a family of hyper-K\"ahler varieties implies the Beauville--Voisin conjecture~\cite{V17} for the very general member of the family\,:

\begin{cor2} Let $Z$ be an LLSS hyper-K\"ahler eightfold.
	Then the $\QQ$-subalgebra $$\mathrm{R}^*(Z) := \langle h, c_j(Z) \rangle \quad \subset \CH^*(Z)$$ generated by the natural polarization $h$ and the Chern classes $c_j(Z)$ injects into cohomology \emph{via} the cycle class map. In particular,
the very general LLSS eightfold satisfies the Beauville--Voisin conjecture~\cite{V17}.  
\end{cor2}

Indeed, any subring of $\CH^*(Z)$ generated by generically defined cycles injects in cohomology, provided $Z$ satisfies the Franchetta property. As such, one
 further obtains as a direct consequence of Theorem~\ref{thm2:FranchettaLLSS} that for an LLSS eightfold $Z$,  the subring of $\CH^*(Z)$ generated by  the polarization $h$, the Chern classes $c_j(Z)$ and the classes of the (generically defined) co-isotropic subvarieties described in \cite[Corollary~1.12]{FLV} injects in cohomology \emph{via} the cycle class map. This provides new evidence for Voisin's refinement in \cite{Voisin2016} of the Beauville--Voisin conjecture. 
	
\subsection*{Strategy of proof of Theorem~\ref{thm2:FranchettaLLSS}} 
 In \cite[Theorem~1.11]{FLV}, we established the Franchetta conjecture for 0-cycles on LLSS eightfolds. Our proof consisted in reducing, \emph{via} Voisin's degree-6 dominant rational map $\psi : F\times F \dashrightarrow Z$ constructed in~\cite[Proposition~4.8]{Voisin2016}, to the generalized Franchetta conjecture for
  the square of the Fano variety of lines $F$ of a smooth cubic fourfold, which we established in \cite[Theorem~1.10]{FLV}.
In order to deal with positive-dimensional cycles on $Z$, we take a completely different approach\,: we consider the recent description of LLSS eightfolds as certain moduli spaces of semistable objects in the Kuznetsov component of the derived category of cubic fourfolds \cite{LLMS, LPZ18}, together with its  generalization due to Bayer--Lahoz--Macr\`i--Nuer--Perry--Stellari \cite{BLMNPS} to the relative setting. 
Our first task, which is carried out in \S \ref{S:motive-moduli}, consists then in relating the Chow motives of the moduli space $M$ of semistable objects in the Kuznetsov  component of the derived category of a smooth cubic fourfold $Y$ to the Chow motives of powers of $Y$. 
By adapting and refining an argument of B\"ulles~\cite{Bue18}, we show in Theorem~\ref{thm:NCK3} that the motive of $M$ belongs to the thick subcategory generated by Tate twists of the motive of $Y^m$, where $\dim M = 2m$. 
 Since all the data involved in the above are generically defined, the Franchetta property for LLSS eightfolds is thus reduced to the Franchetta property for fourth powers of smooth cubic fourfolds (Theorem~\ref{thm2:Fcubic} below). 
 The proof of Theorem~\ref{thm2:FranchettaLLSS} is then given in \S \ref{S:proof}\,; see Theorem~\ref{thm:FranchettaLLSS}.

 \subsection*{Powers of smooth cubic hypersurfaces} 

 The following theorem, in the case of cubic fourfolds, suggests that the Franchetta property could hold for powers of Fano varieties of cohomological K3-type\,; that (conjectural) motivic properties of hyper-K\"ahler varieties could transfer to Fano varieties of cohomological K3-type was already pinpointed in~\cite{FLV-MCK}.
 
 \begin{thm2}\label{thm2:Fcubic}
Let $B$ be the open subset of $\PP \HH^0(\PP^{n+1},\mathcal{O}(3))$ parameterizing smooth cubic hypersurfaces in $\PP^{n+1}$, and let $\mathcal Y \to B$ be the corresponding universal family. Then the families $\mathcal Y^m_{/B} \to B$ satisfy the Franchetta property for all $m\leq 4$.
 \end{thm2}

 Theorem~\ref{thm2:Fcubic} is established in \S \ref{SS:cubic}. Its proof relies on
   the existence of a multiplicative Chow--K\"unneth decomposition for cubic hypersurfaces  (see Theorem~\ref{thm:MCKcubic}), and on an analogue in the case of cubic hypersurfaces
 of a result of Yin~\cite{YinK3} concerning K3 surfaces (which itself is analogous to a result of Tavakol~\cite{Tavakol} concerning hyperelliptic curves). The latter is embodied in Corollary~\ref{cor:TautoCubic}. In the particular case of cubic \emph{fourfolds}, it admits also the following refined form, which is the analogue of Voisin's \cite[Conjecture 1.6]{V17}\,:

 	\begin{prop2}[see Proposition~\ref{proppowers}]
 		 Let $Y$ be a smooth cubic fourfold and
 		$m\in\N$. Let $\widetilde{\mathrm{R}}^{\ast}(Y^m)$ be the $\QQ$-subalgebra
 		\[ \tilde{\mathrm{R}}^\ast(Y^m):=\langle p_i^\ast \CH^1(Y), \ p_j^\ast \CH^2(Y), \
 		p_{k,l}^\ast
 		\Delta_Y
 		\rangle\ \ \ \subset\ \CH^\ast(Y^m),\ \]
 		where $p_i$, $p_j$ and $p_{k,l}$ denote the various projections from $Y^m$ to
 		$Y$
 		and $Y^2$.
 		Then $\tilde{\mathrm{R}}^*(Y^m)$ injects into $\HH^{2*}(Y^m,\QQ)$ \emph{via} the cycle class
 		map for all $m \leq 2b_{\mathrm{tr}}(Y)+1$, where $b_{\mathrm{tr}}(Y)$ denotes the dimension of the transcendental cohomology of the smooth cubic fourfold $Y$.
 	Moreover, $\tilde{\mathrm{R}}^*(Y^m)$ injects into $\HH^{2*}(Y^m,\QQ)$ for all $m$ if and only
 	if $Y$ is Kimura--O'Sullivan finite-dimensional~\cite{KimuraFD}.
 \end{prop2}

\subsection*{Fano varieties of lines on smooth cubic hypersurfaces} 
Combining Theorem~\ref{thm2:Fcubic} with our previous work \cite[Theorem~4.2]{FLV-MCK} 
where we established the Franchetta property for the square of the Fano variety of lines on a smooth cubic hypersurface, we can compute explicitly  the Chow motive of the Fano variety of lines on a smooth cubic hypersurface in terms of the Chow motive of the cubic hypersurface, without resorting to Kimura--O'Sullivan finite-dimensionality arguments. The following is the main result\,; see Theorem~\ref{thm:FY2} for more precise statement and stronger results.

 \begin{thm2}\label{thm2:FY}
 Let $Y$ be a smooth cubic hypersurface in $\PP^{n+1}$ and $F$ the associated Fano variety of lines on~$Y$. We have an isomorphism of Chow motives
\begin{equation}\label{eqn:GalkinShinderMot}
\h(F)\simeq \Sym^{2}(\h^{n}(Y)_{\prim}(1))\oplus \bigoplus_{i=1}^{n-1}\h^{n}(Y)_{\prim}(2-i)\oplus \bigoplus_{k=0}^{2n-4}\1(-k)^{\oplus a_{k}},
\end{equation} 
where 
\begin{equation*}
a_{k}=\begin{cases}
\lfloor \frac{k+2}{2}\rfloor &\text{ if } k<n-2 \\
\lfloor \frac{n-2}{2}\rfloor &\text{ if } k=n-2\\
\lfloor \frac{2n-2-k}{2}\rfloor &\text{ if } k>n-2.\\
\end{cases}
\end{equation*} 
In particular, we have an isomorphism of Chow motives 
\begin{equation}\label{eqn:QuestionHuy}
\h(F)(-2) \oplus \h(Y) \oplus \h(Y)(-n) \simeq \mathrm{Sym}^2\h(Y).
\end{equation}
\end{thm2}
 
 \begin{rmk2}
 Let us explain the relations of Theorem \ref{thm2:FY} to earlier works and open questions in the literature. 
 \begin{enumerate}[$(i)$]
 \item The isomorphism \eqref{eqn:GalkinShinderMot} lifts the isomorphism in cohomology due to Galkin--Shinder \cite[Theorem 6.1]{GalkinShinder} to the level of Chow motives.
 \item  The isomorphism \eqref{eqn:QuestionHuy} answers a question of Huybrechts \cite[\S3.3]{Huy}.
\item Theorem~\ref{thm2:FY} refines the main result of~\cite{Lat-Y-FY} and, in fact, our method of proof, which goes through the Franchetta property for $F\times F$ established in \cite[Theorem~4.2]{FLV-MCK} and a cancellation property described in Proposition~\ref{P:cancellation}, provides a new, independent, and more conceptual, proof of the main result of~\cite{Lat-Y-FY}.
\item Specializing to the case of cubic fourfolds, \eqref{eqn:GalkinShinderMot} implies 
an isomorphism of Chow motives
\begin{equation}\label{eqn:FanoCubicFourfold}
\h(F) \simeq \mathrm{Sym}^2M \oplus M(-1),
\end{equation} 
where  $M:=  (Y,\Delta_Y - \frac{1}{3}h^4\times Y -  \frac{1}{3}h^2\times h^2 - \frac{1}{3}Y\times h^4,1)= \mathds{1} \oplus \h^4(Y)_{\mathrm{prim}}(1) \oplus \mathds{1}(-2) $ is the \emph{K3-surface-like} Chow motive. 
Recall that for a smooth projective surface $S$, the Hilbert square of $S$ is isomorphic to the blow-up of the symmetric square $\mathrm{Sym}^2 S$ along the diagonal and that $\h(\mathrm{Hilb}^2(S)) \simeq \mathrm{Sym}^2\h(S) \oplus \h(S)(-1)$. Therefore, \eqref{eqn:FanoCubicFourfold} can be interpreted  as saying that the Chow motive of $F$ is the Hilbert square of the Chow motive  $M$ of a ``non-commutative'' K3 surface\,; this is the motivic analogue of the following folklore conjecture (\emph{cf.}\ \cite[Conjecture~4.3]{Pop})\,: 
\emph{given a smooth cubic fourfold $Y$, the derived category of the Fano variety of lines $F$ is equivalent, as a $\C$-linear triangulated category, to the symmetric square 
(in the sense of Ganter--Kapranov \cite{MR3177367}) of the Kuznetsov component $\mathcal{A}_{Y}$ of the derived category of $Y$, \emph{i.e.},
$\D^{b}(F)\cong \mathrm{Sym}^{2}\mathcal{A}_{Y}.$}

 \end{enumerate}
 \end{rmk2}

\subsection*{Further outlooks} The strategy for proving Theorem~\ref{thm2:FranchettaLLSS} has potential beyond the case of LLSS eightfolds. Indeed, once suitable stability conditions are constructed for other non-commutative K3 surfaces (\emph{cf}.\ \S \ref{S:motive-moduli}), one may hope our strategy can be employed to prove the generalized Franchetta conjecture for the associated hyper-K\"ahler varieties.
In \S \ref{S:moduli}, we exemplify the above by establishing the Franchetta property for many (non locally complete) families of hyper-K\"ahler varieties.

\subsection*{Conventions} All algebraic varieties are defined over the field of complex numbers. We work with Chow groups with rational coefficients. The categories of motives we consider are the categories of pure Chow motives with rational coefficients $\MM_{\mathrm{rat}}$ and of pure numerical motives with rational coefficients $\MM_{\mathrm{num}}$,  as reviewed in \cite{Andre}. We write $\h(X)$ for the Chow motive of a smooth projective variety $X$. The set of non-negative integers will be denoted by $\N$.

\subsection*{Acknowledgements} We thank Thorsten Beckmann, Tim-Henrik B\"ulles, Chunyi Li and Xiaolei Zhao for helpful exchanges. We thank the referees for constructive comments that improved our paper.

	\section{The motive of moduli spaces of objects in 2-Calabi--Yau categories}
	\label{S:motive-moduli}
	An important source of examples of projective hyper-K\"ahler manifolds is given by moduli spaces of stable sheaves on Calabi--Yau surfaces \cite{Muk84} \cite{MR1703077} \cite{MR1966024} \cite{Yos01}. Recently, B\"ulles \cite{Bue18} showed that the Chow motive of such a (smooth projective) moduli space is in the thick tensor subcategory generated by the motive of the surface. By allowing non-commutative ``Calabi--Yau surfaces'', we get even more examples of hyper-K\"ahler varieties as moduli spaces of stable objects in a 2-Calabi--Yau category equipped with stability conditions \cite{BM14a} \cite{BM14b} \cite{BLMS} \cite{BLMNPS} \cite{PPZ19}. B\"ulles' result was recently extended to this non-commutative setting by Floccari--Fu--Zhang \cite[Theorem 5.3]{FFZ19}. In this section, we provide a refinement of these results following an observation of Laterveer \cite{LatLagFib}.\medskip
		
	Let $Y$ be a smooth projective variety and let $\D^{b}(Y)$ be its bounded derived category of coherent sheaves. Let  $\mathcal{A}$ be an admissible triangulated subcategory of $\D^{b}(Y)$ such that it is \emph{2-Calabi--Yau}, that is, its Serre functor is the double shift $[2]$. There are by now several interesting examples of such 2-Calabi--Yau categories\,: 
	\begin{enumerate}[$(i)$]
	\item  the twisted derived categories of K3 or abelian surfaces equipped with a Brauer class\,;
	\item the Kuznetsov component of the derived categories of cubic fourfolds \cite{Kuz10}\,;
	\item the Kuznetsov component of the derived categories of Gushel--Mukai fourfolds or sixfolds \cite{Gus83} \cite{Muk89} \cite{DK19} \cite{KP16}\,;
	\item the Kuznetsov component of the derived categories of  Debarre--Voisin twentyfolds \cite{MR2746467}.
	\end{enumerate}
	In all the examples listed above, there is a semi-orthogonal decomposition $$\D^{b}(Y)=\langle\mathcal{A}, {}^{\perp}\mathcal{A}\rangle,$$ where ${}^{\perp}\mathcal{A}:=\{E\in \D^{b}(Y)~\vert~\Hom(E, F)=0 \text{ for all } F\in \mathcal{A}\}$ is generated by an exceptional collection (with $ {}^{\perp}\mathcal{A} = 0$ in $(i)$)\,; see \cite{MS18} or \cite[Example 5.1]{FFZ19} for more details.  
	
 Let us now proceed to review the notions of Mukai lattice and Mukai vector. For that purpose, recall that the topological K-theory of $Y$ is naturally equipped with the Euler pairing\,:
	$$([E], [F]):=-\chi(E, F), \ \text{ for all } [E], [F]\in K_{\top}(Y).$$
	Following \cite{AT14}, the \emph{Mukai lattice} of $\mathcal{A}$ is defined as the free abelian group
		$$\HH(\mathcal{A}):=\{\alpha \in K_{\top}(Y)~\vert~(\alpha, [E])=0 \text{ for all } E\in {}^{\perp}\mathcal{A}\}/\text{torsion},$$
	equipped with the restriction of the Euler pairing, which is called the \emph{Mukai pairing}. The \emph{Mukai vector} of an object $E\in \mathcal{A}$ is by definition $v(E):=\ch(E)\cdot \sqrt{\operatorname{td}_{Y}}$. It is an element of $\HH(\mathcal{A})$ by construction.

	Assume that $\mathcal{A}$ admits stability conditions in the sense of Bridgeland \cite{Bri07}\,; this has by now been established  for examples $(i) \sim (iii)$ \cite{Bri08} \cite{YY14} \cite{BLMS} \cite{PPZ19},
	 and is also expected for example $(iv)$. We denote the distinguished connected component of the stability manifold by $\Stab^\dagger(\mathcal{A})$. Recall that if $v$ is a primitive element in the Mukai lattice of $\mathcal{A}$, a stability condition $\sigma\in \Stab^\dagger(\mathcal{A})$ is said to be $v$-\emph{generic} if stability coincides with semi-stability for all objects in $\mathcal{A}$ with Mukai vector~$v$. General results in \cite{Lieblich-JAG06} and \cite{AHLH-Existence} guarantee that a good moduli space $\mathcal{M}_{\sigma}(\mathcal{A}, v)$ of $\sigma$-semistable objects in $\mathcal{A}$ with Mukai vector $v$ exists as an algebraic space of finite type. 
	 Moreover, initiated by Bayer--Macr\`i \cite{BM14a, BM14b}, a much deeper study shows that the moduli space $\mathcal{M}_{\sigma}(\mathcal{A}, v)$ is a non-empty projective hyper-K\"ahler manifold for examples  $(i)$, $(ii)$ and $(iii)$,  by \cite{BM14b}, \cite{BLMNPS}, and \cite{PPZ19} respectively (the example $(iv)$ is also expected).

	\begin{thm}\label{thm:NCK3}
Let $Y$ be a smooth projective variety and $\mathcal{A}$ be an admissible triangulated subcategory of $\D^b(Y)$ such that $\mathcal{A}$ is 2-Calabi--Yau. Let $v$ be a primitive element in the Mukai lattice of $\mathcal{A}$ and let $\sigma\in \Stab^\dagger(\mathcal{A})$ be a $v$-generic
stability condition. Assume that $\mathcal{M}:=\mathcal{M}_{\sigma}(\mathcal{A}, v)$ is a non-empty smooth projective hyper-K\"ahler variety of dimension $2m:=v^{2}+2$. Then the Chow motive $\h(\mathcal{M})$ is a direct summand of a Chow motive of the form $$\bigoplus_{i=1}^{r}\h(Y^m)(l_{i})$$ with $r\in \N$, $l_{i}\in \Z$.
\end{thm}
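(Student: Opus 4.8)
The plan is to adapt B\"ulles' strategy \cite{Bue18}, as extended by Floccari--Fu--Zhang \cite{FFZ19}, but to track more carefully which powers of $Y$ appear. The starting point is the existence of a (quasi-)universal family $\mathcal{E}$ on $\mathcal{M}\times Y$ (in the twisted/relative sense appropriate to $\mathcal{A}$), whose Mukai vector $v(\mathcal{E}) \in \CH^*(\mathcal{M}\times Y)_{\QQ}$ (after trivializing the Brauer obstruction rationally) gives a correspondence inducing, via the K\"unneth components, maps between the motives of $\mathcal{M}$ and $Y$. The key homological input is that the cohomological realization of $\mathcal{A}$ (the Mukai lattice $\HH(\mathcal{A})$ together with its weight-two Hodge structure) is, up to Tate twists, a sub-Hodge-structure of $\HH^*(Y)$; equivalently $\h(\mathcal{M})$ is built from $\mathds{1}$-summands and the transcendental part $\mathfrak{t}(\mathcal{A})$, which is a direct summand of $\h(Y)$ up to a Tate twist. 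So morally one wants $\h(\mathcal{M})$ to be cut out of $\bigoplus \h(Y)(l_i)$; the point of the theorem is to sharpen this to powers $Y^m$ with $m = \frac{v^2+2}{2}$.

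The main steps I would carry out are as follows. First, using the universal family, produce for each $k\ge 0$ a correspondence $\Gamma_k \in \CH^*(Y^{[k]}\text{-like object} )$ — more precisely, following B\"ulles, the components of $\mathrm{ch}(\mathcal{E})$ give classes that, assembled over products, yield a correspondence $\Psi: \h(\mathcal{M}) \to \bigoplus_k \h(Y^k)(\ast)$ and a candidate one-sided inverse. Second, invoke the ``multiplicative'' structure: $\mathrm{ch}(\mathcal{E})$ has a K\"unneth decomposition $\sum_j \alpha_j \otimes \beta_j$ with $\alpha_j\in\CH^*(\mathcal{M})$, $\beta_j \in \CH^*(Y)$, and products of such classes live on $Y^k$ — the crucial numerical bookkeeping is that because $v$ is primitive with $v^2 = 2m-2$, the span of the relevant classes is controlled by $m$, so that only $\h(Y^k)$ for $k\le m$ (and hence, by adding $\mathds{1}$-factors, $\h(Y^m)$) are needed. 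Third, run the idempotent-completion / Lefschetz-type argument: one shows the composite $\h(\mathcal{M}) \to \bigoplus \h(Y^m)(l_i) \to \h(\mathcal{M})$ is an isomorphism, hence $\h(\mathcal{M})$ is a direct summand, using that the intersection pairing on the tautological classes generated by $\mathrm{ch}(\mathcal{E})$ is non-degenerate — this is where Laterveer's observation \cite{LatLagFib} enters, allowing one to replace the full collection of powers $Y^{\le 2m}$ appearing in \cite{FFZ19} by $Y^m$ alone.

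Concretely, the heart is a linear-algebra statement about the subalgebra of $\CH^*(\mathcal{M})$ generated by the K\"unneth factors $\alpha_j$ of $v(\mathcal{E})$: one shows this subalgebra is spanned by monomials of degree $\le m$ in a spanning set of size related to $\operatorname{rk}\HH(\mathcal{A})$, and that the corresponding "Poincar\'e-dual" classes — obtained by intersecting with powers of the O'Grady/Beauville--Bogomolov class — pair non-degenerately, so that the obvious projector onto $\h(\mathcal{M})$ inside $\bigoplus \h(Y^m)(l_i)$ splits off a copy of $\h(\mathcal{M})$.

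The step I expect to be the main obstacle is precisely this last non-degeneracy/splitting input: showing that the natural correspondence $\bigoplus_i \h(Y^m)(l_i) \to \h(\mathcal{M})$ built from the universal family admits a section, i.e.\ that the relevant intersection matrix of tautological classes is invertible in $\CH^*(\mathcal{M})$ and not merely in cohomology. B\"ulles handles this for ordinary moduli of sheaves by an inductive argument on the dimension using the structure of the universal sheaf and restriction to the diagonal; the non-commutative case requires replacing "the universal sheaf restricted to the diagonal" by the appropriate object in $\mathcal{A}\boxtimes\mathcal{A}$ and verifying that the Mukai pairing computation still forces the matrix to be a non-zero scalar times an identity block. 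Getting the bound to land exactly on $m$ (rather than, say, $2m$) is where the refinement of \cite{Bue18} and \cite{FFZ19} is needed, and I would expect to spend most of the effort making that counting argument precise, using that $v$ is primitive so that no "extra" generators are forced and the Hilbert-scheme-type stratification of the tautological ring truncates at degree $m$.
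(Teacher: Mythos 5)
Your overall skeleton (B\"ulles-style correspondences through powers of $Y$ built from $\ch(\mathcal{E})$, with Laterveer's input responsible for the improvement of the bound) points in the right direction, but the proposal has a genuine gap precisely at the step you yourself flag as the main obstacle: you plan to split $\h(\mathcal{M})$ off $\bigoplus_i \h(Y^m)(l_i)$ by proving that some intersection matrix of tautological classes is invertible in $\CH^*(\mathcal{M})$ (not just in cohomology), and you have no mechanism for doing so. The actual argument never needs any such non-degeneracy statement, because the missing key ingredient is Markman's theorem, extended to $2$-Calabi--Yau categories by Floccari--Fu--Zhang: the diagonal is \emph{equal} to a Chern class of the relative Ext-complex, $\Delta_{\mathcal{M}} = c_{2m}(\mathcal{P})$ in $\CH^{2m}(\mathcal{M}\times\mathcal{M})$, with $\mathcal{P}=-R\pi_{1,3,*}(\pi_{1,2}^{*}\mathcal{E}^{\vee}\otimes^{\mathbb{L}}\pi_{2,3}^{*}\mathcal{E})$. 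Once the identity correspondence itself is expressed this way, it suffices to show that $c_{2m}(\mathcal{P})$ lies in the ideal $I_m=\langle \beta\circ\alpha \mid \alpha\in\CH^*(\mathcal{M}\times Y^m),\ \beta\in\CH^*(Y^m\times\mathcal{M})\rangle$; then the factorization of $\mathrm{id}_{\h(\mathcal{M})}$ through $\bigoplus_i\h(Y^m)(l_i)$ automatically makes $\h(\mathcal{M})$ a direct summand, by pseudo-abelianness of $\MM_{\mathrm{rat}}$. No section has to be constructed by hand and no pairing has to be inverted in Chow.

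Your bookkeeping for why the bound lands on $m$ is also not the right one: primitivity of $v$ and $v$-genericity only guarantee that $\mathcal{M}$ is a smooth projective hyper-K\"ahler variety carrying a (quasi-)universal family; they play no role in the counting. The counting is a parity induction on Chern classes: Grothendieck--Riemann--Roch gives $\ch(\mathcal{P})=\beta\circ\alpha$ with $\alpha\in\CH^*(\mathcal{M}\times Y)$, $\beta\in\CH^*(Y\times\mathcal{M})$, so all $\ch_i(\mathcal{P})\in I_1$; Laterveer's observation is \emph{not} a non-degeneracy statement but the fact that $q(\mathcal{M})=0$ forces every divisor on $\mathcal{M}\times\mathcal{M}$ to be decomposable, i.e.\ $\CH^1(\mathcal{M}\times\mathcal{M})\subset I_0$, which settles $c_0,c_1$; then writing $\ch_i=\frac{(-1)^{i-1}}{(i-1)!}c_i+Q(c_1,\dots,c_{i-1})$ and using B\"ulles' multiplicativity $I_k\cdot I_{k'}\subset I_{k+k'}$ together with $\sum_j \lfloor j/2\rfloor d_j\leq\lfloor i/2\rfloor$ for weighted monomials of degree $i$, one gets $c_i(\mathcal{P})\in I_{\lfloor i/2\rfloor}$ by induction. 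Taking $i=2m=\dim\mathcal{M}$ yields $\Delta_{\mathcal{M}}\in I_m$, which is exactly the assertion of the theorem. Without the Markman-type identity your route would require a Beauville--Voisin-type invertibility statement in $\CH^*(\mathcal{M})$ that is not known and is not needed.
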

	 The novelty of this result with respect to \cite[Theorem 5.3]{FFZ19} is the better bound on the power of $Y$, which will be crucial in the proof of the Franchetta property. 
	
	\begin{proof}[Proof of Theorem \ref{thm:NCK3}]
	Following B\"ulles \cite{Bue18}, we consider the following chain of two-sided ideals of the ring of self-correspondences of $\mathcal{M}$\,:
	$$I_{0}\subset I_{1}\subset \dots \subset \CH^{*}(\mathcal{M}\times \mathcal{M}),$$
	where for any non-negative integer $k$, $$I_{k}:=\langle \beta\circ \alpha~\vert~\alpha\in \CH^{*}(\mathcal{M}\times Y^{k}), \beta\in\CH^{*}(Y^{k}\times \mathcal{M})\rangle.$$
	Note that $I_{0}=\langle \alpha\times \beta~\vert~ \alpha, \beta\in \CH^{*}(M)\rangle$ consists of ``decomposable'' cycles in $\mathcal{M}\times \mathcal{M}$. The conclusion of the theorem can be rephrased as saying that $\Delta_{\mathcal{M}}\in I_{m}$.
	
	Using Lieberman's formula, B\"ulles showed \cite[Theorem 1.1]{Bue18} that the intersection product behaves well with respect to the grading. More precisely, for any $k, k'\geq 0$,
	\begin{equation}\label{eqn:multiplicativity}
	 I_{k}\cdot I_{k'}\subset I_{k+k'}.
	\end{equation}
	The observation of Laterveer \cite[Lemma 2.2]{LatLagFib} is that the vanishing of the irregularity of $\mathcal{M}$ implies that any divisor of $\mathcal{M}\times \mathcal{M}$ is decomposable, that is, 
	\begin{equation}\label{eqn:divisor}
	 \CH^{1}(\mathcal{M}\times \mathcal{M})\subset I_{0}.
	\end{equation}
	It was pointed out in \cite[Proposition 5.2]{FFZ19} that the proof of Markman's result \cite[Theorem 1]{Mar02} (revisited in \cite{MZ17})  goes through for any 2-Calabi--Yau category, and we have  $$\Delta_{\mathcal{M}} = c_{2m}(\mathcal{P}) \in \CH^{2m}(\mathcal{M} \times \mathcal{M}), $$
	with $\mathcal{P}:=-R\pi_{1,3,*}(\pi_{1,2}^{*}(\mathcal{E})^{\vee}\otimes^{\mathbb{L}}\pi_{2,3}^{*}(\mathcal{E}))$, where $\mathcal{E}$ is a universal family and the $\pi_{i,j}$'s are the natural projections from $\mathcal{M}\times S\times \mathcal{M}$. Therefore our goal is to show that $c_{2m}(\mathcal{P})\in I_{m}$. We prove by induction that $c_{i}(\mathcal{P})\in I_{\lfloor i/2\rfloor}$ for any $i\in \N$.
	
	The cases $i=0$ and 1 are clear from \eqref{eqn:divisor}. For $i\geq 2$, as in \cite{Bue18}, the Grothendieck--Riemann--Roch theorem implies that $\ch(\mathcal{P}) = - (\pi_{1,3})_\ast(\pi_{1,2}^\ast\alpha \cdot \pi_{2,3}^\ast\beta) =\beta\circ \alpha$,   where
    $$ \alpha = \ch(\mathcal{E}^\vee) \cdot \pi_Y^\ast\sqrt{\operatorname{td}(Y)} \quad\text{ in } \CH^*(\mathcal{M}\times Y)\quad\text{and}\quad \beta = \ch(\mathcal{E}) \cdot \pi_Y^\ast\sqrt{\operatorname{td}(Y)} \quad\text{ in } \CH^*(Y\times \mathcal{M})\ .$$ 
       Hence $\ch_{i}(\mathcal{P})\in I_{1}$ for all $i\in \N$, by definition. Let us drop $\mathcal{P}$ from the notation in the sequel. Note that $$\ch_{i}=\frac{(-1)^{i-1}}{(i-1)!}c_{i}+Q(c_{1},\dots, c_{i-1}),$$ where $Q$ is a weighted homogeneous polynomial of degree $i$. It suffices to show that $$Q(c_{1}, \dots, c_{i-1})\in I_{\lfloor i/2\rfloor}.$$ To this end, for any monomial  $c_{1}^{d_{1}}c_{2}^{d_{2}}\cdots c_{i-1}^{d_{i-1}}$ of $Q$, we have that $\sum_{j=1}^{i-1}jd_{j}=i$ and hence 
    \begin{equation}\label{eqn:index}
     \sum_{j=1}^{i-1}\lfloor j/2\rfloor d_{j}\leq \lfloor i/2\rfloor.
    \end{equation}
     Using the induction hypothesis that $c_{j}\in I_{\lfloor j/2\rfloor}$ for any $j\leq i-1$, we see that 
    $$c_{1}^{d_{1}}c_{2}^{d_{2}}\cdots c_{i-1}^{d_{i-1}}\in \prod_{j=1}^{i-1}I^{d_{j}}_{\lfloor j/2\rfloor}\subseteq I_{\sum_{j=1}^{i-1}\lfloor j/2\rfloor d_{j}}\subseteq I_{\lfloor i/2\rfloor},$$
    where $\prod$ denotes the intersection product, the second inclusion uses the multiplicativity \eqref{eqn:multiplicativity}, and the last inclusion follows from \eqref{eqn:index}.	The induction process is complete. We conclude that $\Delta_{\mathcal{M}} = c_{2m}$ belongs to $I_{m}$, as desired.
	\end{proof}

	In Theorem \ref{thm:NCK3}, if the Mukai vector $v$ is not primitive, the moduli space of semistable objects $\mathcal{M}_{\sigma}(\mathcal{A}, v)$ is no longer smooth (for any stability condition $\sigma$).  
	However,  in the so-called O'Grady-10 case, namely $v=2v_{0}$ with $v_{0}^{2}=2$, for a generic stability condition $\sigma$,  there exists a crepant resolution of $\mathcal{M}_{\sigma}(\mathcal{A}, v)$, which is a projective hyper-K\"ahler tenfold.  
	In the classical case of moduli spaces of $H$-semistable sheaves with such a Mukai vector $v$ on a K3 or abelian surface with the polarization $H$ being $v$-generic, such a crepant resolution was constructed first by O'Grady \cite{MR1703077} for some special  $v$, and then by Lehn--Sorger \cite{LehnSorger-OG10} and Perego--Rapagnetta \cite{PR13} in general. 
	In the broader setting where $\mathcal{A}=\D^b(S)$ is the derived category of a K3 surface and $\sigma$ is a $v$-generic stability condition, the existence of a crepant resolution of $\mathcal{M}_{\sigma}(\mathcal{A}, v)$ was proved by Meachan--Zhang \cite[Proposition 2.2]{MeachanZhang} using \cite{BM14b}.
	In our general setting where $\mathcal{A}$ is a 2-Calabi--Yau category, Li--Pertusi--Zhao~\cite[Section 3]{LPZ19}  showed that the singularity  of $\mathcal{M}_{\sigma}(\mathcal{A}, v)$ has the same local model as in the classical case and that the construction of the crepant resolution in \cite{LehnSorger-OG10} can be adapted by using \cite{AHR-Annals}. For details we refer to \cite[Section 3]{LPZ19}, where the proof was written for cubic fourfolds, but works in general.

	The following two results exemplify the belief that the Chow motive of the crepant resolution can be controlled in the same way as in Theorem \ref{thm:NCK3}. 
	Theorem~\ref{thm:K3OG} is for  K3 and abelian surfaces, while Theorem~\ref{thm:NCK3OG} is in the non-commutative setting  where $\mathcal{A}$ is the Kuznetsov component of a cubic fourfold.
	
	\begin{thm}\label{thm:K3OG}
	Let $S$ be a K3 or abelian surface and let $\alpha$ be a Brauer class of $S$. Let $v_{0}\in \widetilde{\HH}(S)$ be a Mukai vector with $v_{0}^{2}=2$. Set $v=2v_0$. Let $\sigma$ be a $v$-generic stability condition on $\D^{b}(S, \alpha)$. Denote by $\widetilde{\mathcal{M}}$ any crepant resolution of the moduli space $\mathcal{M}:=\mathcal{M}_{\sigma}(S, v)$ of $\sigma$-semistable objects in $\D^{b}(S, \alpha)$ with Mukai vector $v$. Then the Chow motive $\h(\widetilde{\mathcal{M}})$ is a direct summand of a Chow motive of the form $$\bigoplus_{i=1}^{r}\h(S^{5})(l_{i})$$ with $r\in \N$, $l_{i}\in \Z$.
	\end{thm}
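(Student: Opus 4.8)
The plan is to mimic the proof of Theorem~\ref{thm:NCK3}, replacing $\mathcal{M}$ by its crepant resolution $\widetilde{\mathcal{M}}$ and replacing $Y^m$ by $S^5$. The point is that $\widetilde{\mathcal{M}}$ is a hyper-K\"ahler tenfold (so $m=5$), and we want to show $\Delta_{\widetilde{\mathcal{M}}} \in I_5$, where now $I_k := \langle \beta\circ\alpha \mid \alpha\in\CH^*(\widetilde{\mathcal{M}}\times S^k),\ \beta\in\CH^*(S^k\times\widetilde{\mathcal{M}})\rangle$. Two ingredients from the ``smooth'' proof survive verbatim on $\widetilde{\mathcal{M}}$\,: the multiplicativity $I_k\cdot I_{k'}\subset I_{k+k'}$ (this is Lieberman's formula, valid on any smooth projective variety), and the decomposability of divisors $\CH^1(\widetilde{\mathcal{M}}\times\widetilde{\mathcal{M}})\subset I_0$, which follows from Laterveer's argument since $\widetilde{\mathcal{M}}$ is hyper-K\"ahler and hence has trivial irregularity.

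The substantive new input is to produce a single correspondence between $\widetilde{\mathcal{M}}$ and a power of $S$ that plays the role of the (derived-dual of the) universal family $\mathcal{E}$. On the open locus $\mathcal{M}^s\subset\mathcal{M}$ of stable objects a (twisted) universal object exists, but it does not extend across the singular locus, and $\widetilde{\mathcal{M}}\to\mathcal{M}$ contracts a divisor over $\mathcal{M}\setminus\mathcal{M}^s$. The cleanest route is to invoke the explicit geometry of the Lehn--Sorger/O'Grady resolution as revisited by Li--Pertusi--Zhao and Meachan--Zhang\,: the singular locus of $\mathcal{M}=\mathcal{M}_\sigma(S,v)$ with $v=2v_0$, $v_0^2=2$, is $\mathcal{M}_\sigma(S,v_0)$ (a K3-type fourfold, itself handled by Theorem~\ref{thm:NCK3} with $m=2$), embedded as the locus of strictly semistable objects $E_1\oplus E_2$, and the exceptional divisor of the blow-up-type resolution is a $\PP^{?}$- (resp.\ relative Grassmannian-) bundle over $\mathcal{M}_\sigma(S,v_0)$ or over $\mathcal{M}_\sigma(S,v_0)\times\mathcal{M}_\sigma(S,v_0)$. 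First I would set up a correspondence: pull back the universal object on $\mathcal{M}^s\times S$, together with the universal objects for the Mukai vector $v_0$ on the exceptional strata (each of which, by the $m=2$ case of Theorem~\ref{thm:NCK3}, has motive a summand of $\bigoplus\h(S^2)(l_i)$, hence $\h(S^4)(l_i)$ after taking a product of two such), and glue these motivic contributions. Concretely, I would argue that $\h(\widetilde{\mathcal{M}})$ decomposes, via the motivic blow-up formula, into the motive of (a resolution of) $\mathcal{M}$ on the one hand and Tate twists of the motive of the exceptional divisor on the other; the exceptional divisor's motive is controlled by $\h(S^4)$, while $\h(\mathcal{M})$ — or rather the part coming from $\mathcal{M}^s$ — is controlled by $\h(S^5)$ exactly as in Theorem~\ref{thm:NCK3}, since on $\mathcal{M}^s$ the B\"ulles argument applies to the (twisted) universal family and $2m=10$ there too.

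I expect the main obstacle to be making the preceding paragraph precise: the resolution $\widetilde{\mathcal{M}}\to\mathcal{M}$ is not literally a blow-up of a smooth variety along a smooth center (the center $\mathcal{M}^{ss}\setminus\mathcal{M}^s$ sits inside the singular $\mathcal{M}$), so one cannot directly quote Manin's blow-up formula. The fix, following \cite{LPZ19}, is to use the local model: \'etale-locally $\mathcal{M}$ looks like the classical O'Grady-10 singularity, whose resolution is understood, and the comparison of universal families across the two charts (stable locus vs.\ exceptional locus) must be tracked motivically. An alternative, possibly cleaner, route — which I would try first — is to use the description of $\widetilde{\mathcal{M}}$ itself as a moduli space: by \cite{LPZ19} (adapting \cite{MeachanZhang}), $\widetilde{\mathcal{M}}$ carries a universal family of the relevant objects after one replaces $\mathcal{M}_\sigma(S,v)$ by an appropriate moduli problem (e.g.\ of Bridgeland-stable objects on the resolution or pairs), so that a version of Markman's/B\"ulles' identity $\Delta_{\widetilde{\mathcal{M}}}=c_{10}(\mathcal{P})$ holds with $\mathcal{P}$ built from that family, landing in $\CH^*(\widetilde{\mathcal{M}}\times S^k\times\widetilde{\mathcal{M}})$ for $k$ bounded by the ``size'' of the objects. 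If this gives $\Delta_{\widetilde{\mathcal{M}}}\in I_5$ — which one checks by the same Grothendieck--Riemann--Roch plus induction argument $c_i(\mathcal{P})\in I_{\lfloor i/2\rfloor}$ used in Theorem~\ref{thm:NCK3} — the theorem follows immediately, taking $Y^m = S^5$.
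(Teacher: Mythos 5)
Your overall skeleton is the right one --- run the B\"ulles induction to control the stable part by $\h(S^5)$, and control the exceptional contributions via the moduli space $\mathcal{M}_\sigma(S,v_0)$ and its square (hence $\h(S^2)$ and $\h(S^4)$ by Theorem~\ref{thm:NCK3}) --- but the step that actually assembles these into a statement about $\h(\widetilde{\mathcal{M}})$ is exactly where both of your proposed routes break down, and neither is repaired. Your ``route 2'' rests on the hope that $\widetilde{\mathcal{M}}$ itself carries a universal family for some modified moduli problem so that Markman's identity $\Delta_{\widetilde{\mathcal{M}}}=c_{10}(\mathcal{P})$ holds on the resolution; this is not available: Markman's argument (and its 2-Calabi--Yau extension in \cite[Proposition 5.2]{FFZ19}) uses stability of the parametrized objects, and points of the exceptional divisor do not parametrize stable objects, so the identity on $\widetilde{\mathcal{M}}$ is unknown and your conclusion is conditional on an unproved premise. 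Your ``route 1'' invokes a motivic blow-up formula which, as you yourself note, does not apply because $\widetilde{\mathcal{M}}\to\mathcal{M}$ is not the blow-up of a smooth variety along a smooth center; saying that the local model of \cite{LPZ19} lets one ``track universal families motivically'' is not an argument --- \'etale-local models do not by themselves produce global correspondences or a global motivic decomposition.

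The mechanism the paper uses instead avoids any blow-up formula for $\widetilde{\mathcal{M}}\to\mathcal{M}$ and any universal family on the resolution. First, by Riess's theorem that birational hyper-K\"ahler varieties have isomorphic Chow motives, one reduces ``any crepant resolution'' (which your write-up does not address) to the explicit O'Grady/Lehn--Sorger one. Then one runs the induction on the \emph{non-proper} stable locus $\mathcal{M}^{\st}$ (checking that multiplicativity and divisor decomposability survive non-properness), obtaining $\Delta_{\mathcal{M}^{\st}}\in I_5$. Finally, on the auxiliary blow-up $\widehat{\mathcal{M}}\to\widetilde{\mathcal{M}}$ from O'Grady's construction, whose boundary is the union of two divisors $\widehat{\Omega}$ and $\widehat{\Sigma}$, one takes Zariski closures of the correspondences: $\Delta_{\widehat{\mathcal{M}}}-\sum_i\widehat{\beta_i}\circ\widehat{\alpha_i}$ is then supported on $\partial\widehat{\mathcal{M}}\times\widehat{\mathcal{M}}\cup\widehat{\mathcal{M}}\times\partial\widehat{\mathcal{M}}$, which yields a split injection of $\h(\widehat{\mathcal{M}})$ into $\bigoplus_i\h(S^5)(l_i)$ plus Tate twists of $\h(\widehat{\Omega})$ and $\h(\widehat{\Sigma})$; the latter are controlled by $\mathcal{M}_\sigma(S,v_0)$ and $\mathcal{M}_\sigma(S,v_0)^2$ via \cite[Lemmas 4.2 and 4.3]{FFZ19}, and then by $\h(S^2)$ and $\h(S^4)$ using Theorem~\ref{thm:NCK3}. (Also, a small inaccuracy: the strictly semistable locus is $\operatorname{Sym}^2\mathcal{M}_\sigma(S,v_0)$, not $\mathcal{M}_\sigma(S,v_0)$.) So the missing idea in your proposal is precisely this closure/decomposition-of-the-diagonal argument with error terms absorbed into boundary divisors; without it, your sketch does not yield the theorem.
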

	The existence of crepant resolutions for $\mathcal{M}:=\mathcal{M}_{\sigma}(S, v)$ was established in \cite[Proposition~2.2]{MeachanZhang}.
	The fact that  $\h(\widetilde{\mathcal{M}})$ is in the tensor subcategory generated by $\h(S)$ is proved in \cite[Theorem~1.3]{FFZ19}. The improvement here consists in bounding the power of $S$ by 5.
	\begin{proof}
	Replacing $\mathcal{M}$ everywhere by the stable locus $\mathcal{M}^{\st}$ in the proof of Theorem \ref{thm:NCK3} yields that 
	\begin{equation}\label{eqn:DeltaMstable}
	 \Delta_{\mathcal{M}^{\st}}\in \langle \beta\circ \alpha~\vert~\alpha\in \CH^{*}(\mathcal{M}^{\st}\times S^{5}), \beta\in\CH^{*}(S^{5}\times \mathcal{M}^{\st})\rangle.
	\end{equation}
Indeed, let us give some details\,:
 define the chain of subgroups $I_{0}\subset I_{1}\subset \dots \subset \CH^{*}(\mathcal{M}^{\st}\times \mathcal{M}^{\st})$ by $I_{k}:=\langle \beta\circ \alpha~\vert~\alpha\in \CH^{*}(\mathcal{M}^{\st}\times S^{k}), \beta\in\CH^{*}(S^{k}\times \mathcal{M}^{\st})\rangle$. 
 Note that, since $\mathcal{M}^{\st}$ is not proper, $\CH^{*}(\mathcal{M}^{\st}\times \mathcal{M}^{\st})$ is no longer a ring for the composition of self-correspondences. It is however easy to see that the multiplicativity \eqref{eqn:multiplicativity} and the inclusion \eqref{eqn:divisor} still hold. Again by \cite[Proposition 5.2]{FFZ19}, $\Delta_{\mathcal{M}^{\st}}=c_{10}(\mathcal{P})$ with $\mathcal{P}$ defined similarly as in the proof of Theorem \ref{thm:NCK3}, by using the universal sheaf over $\mathcal{M}^{\st}$. The Grothendieck--Riemann--Roch theorem implies that the Chern characters of $\mathcal{P}$ belong to $I_{1}$. The same induction process as in the proof of Theorem~\ref{thm:NCK3} shows that the $i$-th Chern class of $\mathcal{P}$ lies in $I_{\lfloor i/2\rfloor}$ for all $i$. In particular,  $\Delta_{\mathcal{M}^{\st}}=c_{10}(\mathcal{P})\in I_{5}$, which is nothing but \eqref{eqn:DeltaMstable}.

The rest of the proof is as in \cite[Section 4]{FFZ19}. Let us give a sketch. First, as two birationally isomorphic hyper-K\"ahler varieties have isomorphic Chow motives \cite{Riess-Manuscripta}, it is enough to consider the crepant resolution constructed by O'Grady and Lehn--Sorger. By construction, there is a further blow-up $\widehat{\mathcal{M}}\to \widetilde{\mathcal{M}}$ whose boundary $\partial \widehat{\mathcal{M}}:=\widehat{\mathcal{M}}\backslash \mathcal{M}^{\st}$ is the union of two divisors denoted by $\widehat{\Omega}$ and $\widehat{\Sigma}$. By taking closures, \eqref{eqn:DeltaMstable} implies that there exist $\widehat{\alpha_{i}}\in \CH^{*}(\widehat{\mathcal{M}}\times S^{5}), \widehat{\beta_{i}}\in\CH^{*}(S^{5}\times \widehat{\mathcal{M}})$ such that $
\Delta_{\widehat{\mathcal{M}}}-\sum_{i}\widehat{\beta_{i}}\circ \widehat{\alpha_{i}}$ is supported on $\partial\widehat{\mathcal{M}}\times \widehat{\mathcal{M}}\cup \widehat{\mathcal{M}}\times \partial \widehat{\mathcal{M}}$. Consequently, there is a split injection of Chow motives
$$\h(\widehat{\mathcal{M}})\hookrightarrow \bigoplus_{i} \h(S^{5})(l_{i})\oplus \h(\widehat{\Omega}) \oplus \h(\widehat{\Omega})(-1)\oplus \h(\widehat{\Sigma})\oplus \h(\widehat{\Sigma})(-1).$$
It remains to show that $\h(\widehat{\Omega})$ and $\h(\widehat{\Sigma})$ are both direct summands of Chow motives of the form $\bigoplus_{i=1}^{r}\h(S^{5})(l_{i})$. 

For $\h(\widehat{\Omega})$,  the proof of \cite[Lemma~4.3]{FFZ19} shows that $\h(\widehat{\Omega})$ has a split injection into a Chow motive of the form $\bigoplus_{i}\h(\mathcal{M}_{\sigma}(S, v_{0}))(l_{i})$. One can conclude by Theorem~\ref{thm:NCK3} that $\h(\mathcal{M}_{\sigma}(S, v_{0}))$ is a direct summand of a Chow motive of the form $\bigoplus_{i} \h(S^{2})(l_{i})$. 

Finally, for $\h(\widehat{\Sigma})$, the proof of \cite[Lemmas~4.2 and~4.3]{FFZ19} shows that $\h(\widehat{\Sigma})$ has a split injection into a Chow motive of the form $\bigoplus_{i}\h(\mathcal{M}_{\sigma}(S, v_{0})^{2})(l_{i})$. Again by Theorem \ref{thm:NCK3}, it is a direct summand of a Chow motive of the form $\bigoplus_{i} \h(S^{4})(l_{i})$.
 	\end{proof}

	\begin{thm}\label{thm:NCK3OG}
	Let $Y$ be a smooth cubic fourfold and let $$\mathcal{A}_{Y}\colon\!\!\!\!=\langle \mathcal{O}_Y,\mathcal{O}_Y(1),\mathcal{O}_Y(2)\rangle^{\perp}=\{E\in \D^b(Y)~\mid~ \operatorname{Ext}^*(\mathcal{O}_Y(i), E)=0 \text{ for } i=0, 1, 2\}$$ be its Kuznetsov component. Let $v_{0}$ be an element in the Mukai lattice of $\mathcal{A}_{Y}$ with $v_{0}^{2}=2$ and set $v=2v_{0}$. Let $\sigma\in \Stab^\dagger(\mathcal{A}_{Y})$ be a $v$-generic stability condition. Let $\mathcal{M}:=\mathcal{M}_{\sigma}(\mathcal{A}_Y, v)$ be the moduli space of $\sigma$-semistable objects in $\mathcal{A}_{Y}$ with Mukai vector $v$. Let $\widetilde{\mathcal{M}}$ be a crepant resolution of $\mathcal{M}$, which is a projective hyper-K\"ahler manifold of dimension~$10$. Then its Chow motive $\h(\widetilde{\mathcal{M}})$ is a direct summand of a Chow motive of the form $$\bigoplus_{i=1}^{r}\h(Y^5)(l_{i})$$ with $r\in \N$, $l_{i}\in \Z$.
	\end{thm}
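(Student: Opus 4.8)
The plan is to mimic the proof of Theorem~\ref{thm:K3OG}, but in the non-commutative setting $\mathcal{A}=\mathcal{A}_Y$, replacing the K3 or abelian surface $S$ everywhere by (powers of) the cubic fourfold $Y$ and keeping track of the bounds on the powers via Theorem~\ref{thm:NCK3}. First I would work on the stable locus $\mathcal{M}^{\st}\subset\mathcal{M}$. As in the proof of Theorem~\ref{thm:K3OG}, define the chain $I_0\subset I_1\subset\cdots\subset\CH^*(\mathcal{M}^{\st}\times\mathcal{M}^{\st})$ by $I_k:=\langle \beta\circ\alpha\mid \alpha\in\CH^*(\mathcal{M}^{\st}\times Y^k),\ \beta\in\CH^*(Y^k\times\mathcal{M}^{\st})\rangle$; as noted there, although $\mathcal{M}^{\st}$ is not proper the multiplicativity \eqref{eqn:multiplicativity} and the divisor inclusion \eqref{eqn:divisor} still hold. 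By \cite[Proposition~5.2]{FFZ19}, applied to the 2-Calabi--Yau category $\mathcal{A}_Y$ and the universal family over $\mathcal{M}^{\st}$, one has $\Delta_{\mathcal{M}^{\st}}=c_{10}(\mathcal{P})$, with $\mathcal{P}$ built from the universal object exactly as in the proof of Theorem~\ref{thm:NCK3}. The Grothendieck--Riemann--Roch computation of that proof gives $\operatorname{ch}_i(\mathcal{P})\in I_1$ for all $i$, and then the same induction (using \eqref{eqn:multiplicativity}, \eqref{eqn:divisor} and the inequality \eqref{eqn:index}) shows $c_i(\mathcal{P})\in I_{\lfloor i/2\rfloor}$, whence $\Delta_{\mathcal{M}^{\st}}=c_{10}(\mathcal{P})\in I_5$.

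Next I would globalize from $\mathcal{M}^{\st}$ to a crepant resolution $\widetilde{\mathcal{M}}$, following \cite[Section~4]{FFZ19}. Since birational hyper-K\"ahler varieties have isomorphic Chow motives \cite{Riess-Manuscripta}, it suffices to treat the Li--Pertusi--Zhao resolution (the non-commutative analogue of O'Grady's and Lehn--Sorger's), for which there is a further blow-up $\widehat{\mathcal{M}}\to\widetilde{\mathcal{M}}$ with boundary $\partial\widehat{\mathcal{M}}=\widehat{\Omega}\cup\widehat{\Sigma}$ a union of two divisors. Taking closures of the correspondences from $I_5$, the relation $\Delta_{\mathcal{M}^{\st}}\in I_5$ yields cycles $\widehat{\alpha_i}\in\CH^*(\widehat{\mathcal{M}}\times Y^5)$, $\widehat{\beta_i}\in\CH^*(Y^5\times\widehat{\mathcal{M}})$ with $\Delta_{\widehat{\mathcal{M}}}-\sum_i\widehat{\beta_i}\circ\widehat{\alpha_i}$ supported on $\partial\widehat{\mathcal{M}}\times\widehat{\mathcal{M}}\cup\widehat{\mathcal{M}}\times\partial\widehat{\mathcal{M}}$, hence a split injection
$$\h(\widehat{\mathcal{M}})\hookrightarrow \bigoplus_i\h(Y^5)(l_i)\oplus\h(\widehat{\Omega})\oplus\h(\widehat{\Omega})(-1)\oplus\h(\widehat{\Sigma})\oplus\h(\widehat{\Sigma})(-1).$$
It then remains to identify $\h(\widehat{\Omega})$ and $\h(\widehat{\Sigma})$ as direct summands of sums of Tate twists of $\h(Y^5)$. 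As in \cite[Lemmas~4.2 and~4.3]{FFZ19}, $\h(\widehat{\Omega})$ splits into a sum of Tate twists of $\h(\mathcal{M}_\sigma(\mathcal{A}_Y,v_0))$ and $\h(\widehat{\Sigma})$ into a sum of Tate twists of $\h(\mathcal{M}_\sigma(\mathcal{A}_Y,v_0)^2)$; since $v_0^2=2$, the moduli space $\mathcal{M}_\sigma(\mathcal{A}_Y,v_0)$ is a smooth projective hyper-K\"ahler fourfold, so Theorem~\ref{thm:NCK3} gives that $\h(\mathcal{M}_\sigma(\mathcal{A}_Y,v_0))$ is a summand of a sum of Tate twists of $\h(Y^2)$ and $\h(\mathcal{M}_\sigma(\mathcal{A}_Y,v_0)^2)$ a summand of a sum of Tate twists of $\h(Y^4)$. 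Feeding these into the display above finishes the proof.

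The main obstacle is not the formal motivic bookkeeping (which is essentially identical to the commutative case) but rather checking that each geometric input used by \cite{FFZ19} in the K3 setting is available for the Kuznetsov component: namely that the local structure of the singularities of $\mathcal{M}_\sigma(\mathcal{A}_Y,v)$ near the strictly semistable locus matches the classical O'Grady model, that a crepant resolution exists with the same two-divisor boundary description, and that the fibrations identifying $\widehat{\Omega}$ and $\widehat{\Sigma}$ over $\mathcal{M}_\sigma(\mathcal{A}_Y,v_0)$ and its square carry over. All of these are provided by Li--Pertusi--Zhao \cite[Section~3]{LPZ19}, whose analysis of the local model and adaptation of the Lehn--Sorger construction (via \cite{AHR-Annals}) applies verbatim; the arguments of \cite[Section~4]{FFZ19} only use these structural facts, so they transfer without change, with the power of $Y$ bounded by $5$ throughout thanks to Theorem~\ref{thm:NCK3}.
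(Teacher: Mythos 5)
Your proposal is correct and follows essentially the same route as the paper: the paper's proof of this theorem simply invokes the arguments of \cite[Section~5]{FFZ19} and declares the proof identical to that of Theorem~\ref{thm:K3OG} with $S$ replaced by $Y$, which is exactly the adaptation you carry out (including the bounds via Theorem~\ref{thm:NCK3} applied to $\mathcal{M}_\sigma(\mathcal{A}_Y,v_0)$ and its square, and the geometric inputs from \cite[Section~3]{LPZ19}).
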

	The existence of crepant resolutions for  $\mathcal{M}:=\mathcal{M}_{\sigma}(\mathcal{A}_Y, v)$ was proved in \cite[Theorem~3.1]{LPZ19}.
The novelty of the above theorem compared to \cite[Theorem 1.8]{FFZ19} consists in bounding the power of $Y$ by 5.
	\begin{proof}
	Using the arguments of \cite[Section~5]{FFZ19}, one sees that the proof of the theorem is the same as that of Theorem~\ref{thm:K3OG} by replacing $S$ by $Y$.
	\end{proof}

\begin{rmk}
	By way of example, let $V=V(Y)$ be the compactification of the ``twisted intermediate jacobian filtration'' constructed in \cite{voisintwisted}\,; this $V$ is a hyper-K\"ahler tenfold of O'Grady-10 type. There exists a tenfold $\wt{\mathcal{M}}$ as in Theorem \ref{thm:NCK3OG} that is birational to $V$ \cite[Theorem 1.3]{LPZ19}, and so one obtains a split injection of Chow motives
  \[ \h(V)\simeq \h(\wt{\mathcal{M}})\ \hookrightarrow\ \bigoplus_{i=1}^{r}\h(Y^5)(l_{i})   \] 
  (here the isomorphism follows from \cite{Riess-Manuscripta}). In particular, this implies that $V$ satisfies the standard conjectures, by Arapura \cite[Lemma 4.2]{Arapura06}.
\end{rmk}

	\section{The Franchetta property for fourth powers of cubic hypersurfaces} \label{S:cubic}
	For a morphism $\mathcal Y \to B$ to a smooth scheme $B$ of finite
	type over a field and for $Y$ a fiber over a closed point of $B$, we define, for
	all positive integers~$m$, 
\begin{equation}\label{E:Fm}
\operatorname{GDCH}_B^*(Y^m) := \operatorname{Im}\big(\CH^*(\mathcal{Y}_{/B}^m) \to \CH^*(Y^m) \big),
\end{equation}
 where
	$\CH^*(\mathcal{Y}_{/B}^m) \to \CH^*(Y^m)$ is the Gysin restriction map. Focusing on smooth projective
	complex morphisms $\mathcal Y \to B$, we say that $\mathcal Y \to B$ (or by
	abuse $Y$, if the family it fits in is clear from the context) satisfies the
	\emph{Franchetta property} for $m$-th powers if the restriction of
	the cycle class map $\operatorname{GDCH}_B^*(Y^m) \to \HH^*(Y^m, \QQ)$ is injective.
	
	Our aim is to establish the Franchetta property for fourth powers
	of cubic hypersurfaces\,; see Theorem \ref{thm2:Fcubic} in the introduction.

	\subsection{Generically defined cycles and tautological cycles} We adapt the stratification argument \cite[Proposition 5.7]{FLV} (which was for Mukai models of K3 surfaces) to its natural generality.
	We first record the following standard fact.
	\begin{lem}\label{trivial}
	Let $P$ be a smooth projective variety. The following conditions are equivalent\,:
	\begin{enumerate}[$(i)$]
	\item The Chow motive of $P$ is of Lefschetz type\,:
	$$\h(P)\simeq \bigoplus_{i=1}^{r}\1(l_{i}),$$
	for some integers $r\geq 1$, $l_{1}, \dots, l_{r}$. 
	\item The cycle class map $\CH^{*}(P)\to \HH^{*}(P, \QQ)$ is an isomorphism.
	\item The cycle class map $\CH^{*}(P)\to \HH^{*}(P, \QQ)$ is injective.
	\item The Chow group $\CH^{*}(P)$ is a finite-dimensional $\QQ$-vector space.
	\item The Chow groups of powers of $P$ satisfy the K\"unneth formula: for any $m\in \NN$, $\CH^{*}(P^{m})\cong \CH^{*}(P)^{\otimes m}$.
	\end{enumerate}
	\end{lem}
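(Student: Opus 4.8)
The plan is to prove Lemma~\ref{trivial} by a cycle of implications, say $(i) \Rightarrow (v) \Rightarrow (iv) \Rightarrow (iii) \Rightarrow (ii) \Rightarrow (i)$, with the slightly subtle point being the return from the cohomological conditions to the motivic one. For $(i) \Rightarrow (v)$: if $\h(P) \simeq \bigoplus_i \1(l_i)$, then $\h(P^m) \simeq \h(P)^{\otimes m} \simeq \bigoplus \1(\text{shifts})$ as well, and since $\CH^*(\1(l)) = \QQ$ in the appropriate degree, one reads off $\CH^*(P^m) = \bigoplus_{i_1,\dots,i_m} \QQ$, which is exactly $\CH^*(P)^{\otimes m}$; the K\"unneth isomorphism here is just bookkeeping of summands. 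The implication $(v) \Rightarrow (iv)$ is the case $m=1$ together with the observation that $\CH^*(P)$, being $\QQ$-linearly a tensor factor of $\CH^*(P^2) = \CH^*(P)^{\otimes 2}$, must be finite-dimensional (an infinite-dimensional $\QQ$-vector space $V$ has $V \otimes V$ strictly larger, but more simply: $\dim \CH^*(P^2) = (\dim\CH^*(P))^2$ forces finiteness of one side iff the other). Actually the cleanest route for $(v)\Rightarrow(iv)$ is simply to note $(v)$ with $m=1$ is vacuous, so one should instead deduce $(iv)$ from $(iii)$ via finiteness of $\HH^*(P,\QQ)$; I would reorganize so that $(iv)$ follows from $(iii)$.

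Let me restructure: $(i) \Rightarrow (ii)$ is immediate since $\CH^*(\1(l)) \to \HH^*(\1(l))$ is an isomorphism of one-dimensional spaces in each relevant degree and both functors are additive. $(ii) \Rightarrow (iii)$ is trivial. $(iii) \Rightarrow (iv)$: the target $\HH^*(P,\QQ)$ is finite-dimensional since $P$ is a compact complex manifold, so an injection forces $\CH^*(P)$ finite-dimensional. $(iv) \Rightarrow (v)$: here I would use that the exterior product map $\CH^*(P)^{\otimes m} \to \CH^*(P^m)$ is always defined, and show it is an isomorphism under finite-dimensionality --- but this requires knowing the Chow groups of $P^m$, which is not automatic from finiteness of $\CH^*(P)$ alone in general. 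So the genuine content, and the main obstacle, lies in $(iv) \Rightarrow (i)$ (or equivalently $(iii) \Rightarrow (i)$): finite-dimensionality of the Chow group of a single smooth projective variety must be promoted to a splitting of its motive into Lefschetz summands.

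The key input for $(iii) \Rightarrow (i)$ is Jannsen's theorem that numerical equivalence is the only adequate equivalence relation giving a semisimple category, combined with the following: if $\CH^*(P) \to \HH^*(P,\QQ)$ is injective then in particular $\CH^*(P) \to \CH^*(P)_{\mathrm{num}}$ is injective, so rational and numerical equivalence coincide on $P$; bootstrapping this to $P \times P$ (using that $\CH^*(P\times P)$ injects into $\HH^*(P\times P,\QQ)$ --- which one gets because $\HH^*(P)$ is generated by algebraic classes, hence so is $\HH^*(P\times P)$ by K\"unneth, and the hard Lefschetz/standard conjectures hold trivially for such $P$) shows the ring of self-correspondences of $P$ modulo rational equivalence equals the semisimple ring of numerical correspondences. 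Then one can lift the K\"unneth--Chow--K\"unneth projectors: since $\HH^*(P,\QQ)$ is a direct sum of Tate classes $\QQ(-j)^{\oplus b_{2j}}$, one produces orthogonal idempotents in $\CH^*(P\times P)$ projecting onto each piece, realizing $\h(P) \simeq \bigoplus_j \1(-j)^{\oplus b_{2j}}$. I expect the cleanest writeup invokes a known packaged statement (e.g. Kimura/Vial-style: a variety whose cohomology is algebraic and whose cycle map is injective has Lefschetz motive), so the proof reduces to citing this; the honest obstacle is that $(iv)$ alone, without knowing $\CH^*(P^2)$ is also injective into cohomology, does not obviously give $(i)$ --- but $\HH^*(P,\QQ)$ being all algebraic (forced by $(ii)$) does propagate to all powers, and then finite-dimensionality of $\CH^*(P^2)$ follows, closing the loop. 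I would therefore route the subtle implication as $(iv) \Rightarrow (ii)$ using that $(iv)$ forces the cycle class map to be injective after one checks no kernel can be numerically trivial yet nonzero --- which is where Jannsen's semisimplicity enters --- and then $(ii) \Rightarrow (i)$ as sketched.

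In summary, the order I would carry out: first dispatch the easy chain $(i)\Rightarrow(ii)\Rightarrow(iii)\Rightarrow(iv)$; then prove $(i)\Rightarrow(v)$ and $(v)\Rightarrow(iv)$ by the exterior-product bookkeeping; and finally invest the real work in $(iv)\Rightarrow(i)$, via: finite-dimensionality $\Rightarrow$ cohomology of $P$ is purely algebraic (Tate type) $\Rightarrow$ (Jannsen semisimplicity + standard conjectures, automatic here) the Chow-K\"unneth projectors exist and split $\h(P)$ into $\bigoplus \1(-j)^{\oplus b_{2j}}$. The main obstacle is ensuring the passage from a numerical splitting of the motive to a splitting modulo rational equivalence, which is exactly where one needs injectivity of the cycle class map to hold not just for $P$ but compatibly on $P\times P$; this is guaranteed because once $\HH^*(P)$ is algebraic, so is $\HH^*(P^m)$ for all $m$, and then condition $(iv)$ (applied to $P^m$, which inherits finite-dimensional Chow groups from the exterior product description) upgrades to $(iii)$ for $P^m$, breaking the apparent circularity.
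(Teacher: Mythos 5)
There is a genuine gap, and it sits exactly where you waver: the implication from $(v)$ back to the other conditions is never validly established. Your first attempt (``$\CH^{*}(P)$ is a tensor factor of $\CH^{*}(P^{2})=\CH^{*}(P)^{\otimes 2}$, hence finite-dimensional'') proves nothing: an infinite-dimensional $\QQ$-vector space $V$ satisfies $V\otimes V\cong V^{\otimes 2}$ with no contradiction, so the mere K\"unneth isomorphism of $(v)$ does not by itself bound dimensions. You then concede the point and ``reorganize'' so that $(iv)$ is deduced from $(iii)$ instead --- but that removes $(v)$ from your cycle of implications entirely, and yet the summary still claims ``$(v)\Rightarrow(iv)$ by exterior-product bookkeeping,'' which was never carried out. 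The paper's argument here is short but is the actual content: under $(v)$ (for $m=2$, via the exterior product) the diagonal decomposes, $\Delta_{P}=\sum_{i=1}^{r}\alpha_{i}\times\beta_{i}$ in $\CH^{*}(P\times P)$; since $\Delta_{P}$ acts as the identity correspondence on $\CH^{*}(P)$, the identity map factors through an $r$-dimensional $\QQ$-vector space, so $\CH^{*}(P)$ is finite-dimensional and $(v)\Rightarrow(iv)$. Without this (or some substitute), your lemma is not proved: $(v)$ could a priori hold while $(i)$--$(iv)$ fail.

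The second problem is your treatment of the hard implication $(iv)\Rightarrow(i)$ (equivalently $(iii)\Rightarrow(i)$). Your mechanism is circular: you use that $\HH^{*}(P,\QQ)$ is generated by algebraic classes to propagate injectivity to $P\times P$, but algebraicity of cohomology is a consequence of $(ii)$, which is precisely what you are trying to reach from $(iv)$; and the claim that Jannsen's semisimplicity of numerical motives shows ``no kernel can be numerically trivial yet nonzero'' under $(iv)$ is not an argument --- semisimplicity of $\MM_{\mathrm{num}}$ says nothing about the kernel of $\CH^{*}(P)\to\HH^{*}(P,\QQ)$. This implication is a genuine theorem (Kimura, Vial: representable/finite-dimensional Chow groups force a motive of Lefschetz type), proved by Bloch--Srinivas-type decomposition of the diagonal and induction on dimension, and the paper handles it by citation. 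You do gesture at citing a ``packaged statement,'' but the one you name presupposes injectivity of the cycle class map, which is exactly what is at stake; the fix is to cite the Kimura/Vial result in the form ``$\CH^{*}(P)$ finite-dimensional $\Rightarrow$ $\h(P)$ of Lefschetz type,'' after which $(i)\Rightarrow(ii)\Rightarrow(iii)\Rightarrow(iv)$ and $(i)+(ii)\Rightarrow(v)$ are the easy steps you already have.
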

	\begin{proof}
	The implications $(i)\Longrightarrow(ii)\Longrightarrow(iii)\Longrightarrow(iv)$ are obvious, while the implication
	$(iv)\Longrightarrow(i)$ is in \cite{KimuraRep} and \cite{VialRep}.
The implication
	$(i)+(ii)\Longrightarrow(v)$ is also clear by the K\"unneth formula for cohomology. It remains to show $(v)\Longrightarrow(iv)$. Suppose $\CH^{*}(P^{2})\cong \CH^{*}(P)\otimes \CH^{*}(P)$. Then there exist $\alpha_{i}, \beta_{i}\in \CH^{*}(P)$ such that $$\Delta_{P}=\sum_{i=1}^{r}\alpha_{i}\times \beta_{i}.$$ In particular, the identity morphism of $\CH^{*}(P)$ factors through an $r$-dimensional $\QQ$-vector space. Therefore $\CH^{*}(P)$ is finite dimensional. 
	\end{proof}
	\begin{defn} \label{def:trivialChow}
	We say a smooth projective variety $P$ has \emph{trivial} Chow groups if $P$ satisfies one of the equivalent conditions in Lemma \ref{trivial}.
	\end{defn}
	Examples of varieties with trivial Chow groups include homogeneous varieties, toric varieties, and varieties whose bounded derived category of coherent sheaves admits a full exceptional collection \cite{MR3331729}. Conjecturally, a smooth projective complex variety has trivial Chow groups if and only if its Hodge numbers $h^{i,j}$ vanish for all $i\neq j$.

	\begin{defn}[Tautological rings]\label{def:tauto}
	 Let $P$ be a smooth projective variety with trivial Chow groups and $Y$ a smooth subvariety. The \emph{tautological ring} of $Y$ is by definition the $\QQ$-subalgebra
	$$\mathrm{R}^{*}(Y):=\langle \operatorname{Im}(\CH^{*}(P)\to \CH^{*}(Y)), c_{i}(T_{Y}) \rangle \subset\ \CH^{*}(Y)$$
	generated by the restrictions of cycles of $P$ and the Chern classes of the tangent bundle~$T_{Y}$. Note that if $Y$ is the zero locus of a dimensionally transverse section of a vector bundle on $P$, the Chern classes of $T_{Y}$ automatically come from $P$.
	More generally, for any $m\in\N$, we define the {\em tautological ring} of $Y^{m}$ as the $\QQ$-subalgebra
	  \[ \mathrm{R}^\ast(Y^m):=\langle  p_i^*\mathrm{R}^{*}(Y), p_{j,k}^*\Delta_Y\rangle\ \ \subset\ \CH^\ast(Y^m)\]
	  generated by pull-backs of tautological classes on factors and pull-backs of the diagonal $\Delta_Y\subset Y\times Y$. Here, $p_i$ and $p_{j,k}$ denote the various projections from $Y^m$ to
	  $Y$ and to $Y^2$. Note that by Lemma \ref{trivial} $(v)$, the cycles coming from the ambient space are all tautological\,: $\operatorname{Im}(\CH^{*}(P^{m})\to \CH^{*}(Y^{m}))\subset \mathrm{R}^{*}(Y^{m})$.
	  \end{defn}
	  Similar subrings are studied for hyperelliptic curves by Tavakol \cite{Tavakol}, for K3 surfaces  by Voisin \cite{V17} and Yin \cite{YinK3}, and for cubic hypersurfaces by Diaz \cite{Diaz}.
	  
	  Given an equivalence relation $\sim$ on $\{1, \dots, m\}$, we define the corresponding \emph{partial diagonal} of $Y^{m}$ by $\{(y_{1}, \dots, y_{m})\in Y^{m}~\mid~ y_{i}=y_{j} \text{ if } i
	  \sim j\}$. 
	  Natural projections and inclusions along partial diagonals between powers of $Y$ preserve the tautological rings. More generally, we have the following fact, which implies that the system of tautological cycles in Definition \ref{def:tauto} is the smallest one that is preserved by natural functorialities and contains Chern classes and cycles restricted from the ambient spaces.
	  \begin{lem}[Functoriality]\label{preserved} Notation is as before.	
	Let $\phi :I\to J$ be a map between two finite sets and let $f: Y^{J}\to Y^{I}$ be the corresponding morphism. Then
	  \[  f_\ast \mathrm{R}^\ast(Y^{J})\ \ \subset\ \mathrm{R}^\ast(Y^I)\quad  \text{ and } \quad   f^{*} \mathrm{R}^\ast(Y^{I})\ \ \subset\ \mathrm{R}^\ast(Y^J)\ .\] 
	  \end{lem}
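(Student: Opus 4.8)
The plan is to reduce the statement to a few generators, using the fact that the tautological rings are stable under the basic operations indexed by injections and surjections of finite index sets, since any map $\phi:I\to J$ factors as a surjection followed by an injection (equivalently, a section of a partial-diagonal projection composed with a partial-diagonal inclusion). Concretely, writing $f:Y^J\to Y^I$ for the morphism induced by $\phi$, I would first treat the two extreme cases separately and then combine them.

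\smallskip
\textbf{Injections.} Suppose $\phi:I\hookrightarrow J$ is injective, so $f:Y^J\to Y^I$ is, up to reindexing, a projection $Y^{I}\times Y^{J\setminus \phi(I)}\to Y^I$ (if $\phi$ is a bijection $f$ is an isomorphism and there is nothing to do). For $f^*$, a generator $p_i^*x$ (with $x\in \mathrm{R}^*(Y)$) or $p_{j,k}^*\Delta_Y$ of $\mathrm{R}^*(Y^I)$ pulls back to the corresponding generator of $\mathrm{R}^*(Y^J)$ indexed by $\phi(i)$ resp.\ $\phi(j),\phi(k)$, so $f^*\mathrm{R}^*(Y^I)\subset \mathrm{R}^*(Y^J)$; this also uses that $f^*$ is a ring homomorphism so it suffices to check it on algebra generators. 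For $f_*$, the projection formula gives $f_*(\xi)=f_*(1)\cdot$ stuff only when $\xi$ is pulled back, so instead I would argue directly: $f_*$ of a product of generators on $Y^J$ equals, by the projection formula applied to the factors involving indices in $J\setminus\phi(I)$, a class of the form (pushforward along $Y^{J\setminus\phi(I)}\to \mathrm{pt}$ of the part supported there) times (the part pulled back from $Y^I$); the first factor lands in $\mathrm{R}^*(Y^I)$ because a class in $\mathrm{R}^*(Y)$ has a rational number as its degree-$\dim Y$ component and the partial-diagonal classes $\Delta_Y$ push forward along one projection to the class of the (tautological) diagonal restricted, i.e.\ to $c_{\mathrm{top}}$-type classes, which lie in $\mathrm{R}^*(Y)$ since $\Delta_Y\in \mathrm{R}^*(Y^2)$ and $\mathrm{R}^*$ is closed under the two partial-diagonal projections $Y^2\to Y$ by definition. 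The point is that any monomial in the generators of $\mathrm{R}^*(Y^J)$ can be rewritten, using $p_{j,k}^*\Delta_Y$ to identify coordinates, so that each factor in $J\setminus\phi(I)$ carries a class pulled back from a single $Y$; then push it forward factorwise.

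\smallskip
\textbf{Surjections.} Suppose $\phi:I\twoheadrightarrow J$; then $f:Y^J\to Y^I$ is the inclusion of the partial diagonal $\delta_\phi$ determined by the equivalence relation ``same $\phi$-fiber'' on $I$, followed by the isomorphism $Y^J\cong \delta_\phi$. For $f^*=\delta_\phi^*$, restricting a generator $p_i^*x$ of $\mathrm{R}^*(Y^I)$ to the partial diagonal gives $p_{\phi(i)}^*x$, a generator of $\mathrm{R}^*(Y^J)$; restricting $p_{i,i'}^*\Delta_Y$ gives either $p_{\phi(i),\phi(i')}^*\Delta_Y$ (if $\phi(i)\neq\phi(i')$) or the class $p_{\phi(i)}^*(\delta^*\Delta_Y)=p_{\phi(i)}^*c_n(T_Y)$ (if $\phi(i)=\phi(i')$, using the self-intersection formula for the diagonal), which is again tautological. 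For $f_*=(\delta_\phi)_*$, I use that $\delta_\phi$ is a regular embedding with known normal bundle (a sum of copies of $T_Y$ along the collapsed factors), so by excess intersection $(\delta_\phi)_*\xi$ is $(\delta_\phi)_*$ of $\xi$, and one computes this by first writing $(\delta_\phi)_*1$ as a polynomial in the classes $p_{j,k}^*\Delta_Y$ (the class of a partial diagonal in $Y^I$ is a product of diagonal classes, hence tautological) and then applying the projection formula to reduce $(\delta_\phi)_*(\xi\cdot \delta_\phi^* \eta)$ to $(\delta_\phi)_*(\xi)\cdot\eta$; since $\xi\in\mathrm{R}^*(Y^J)$ is a polynomial in the restricted-from-$Y^I$ generators by the computation of $f^*$ just done, this reduces to the case $\xi=1$, which we have handled.

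\smallskip
\textbf{Combining and the main obstacle.} Factoring a general $\phi$ as a surjection followed by an injection and composing the inclusions above finishes the proof; one should note that $f^*$ for a composite is the composite of the $f^*$'s in the opposite order, and likewise $f_*$, so closure under each piece gives closure for the composite. I expect the genuinely fiddly step to be the pushforward along injections (equivalently, along the structure maps $Y^{J\setminus\phi(I)}\to \mathrm{pt}$ applied fiberwise): one must check that after using the partial-diagonal classes to ``untangle'' a monomial so that the factors over $J\setminus\phi(I)$ carry only classes pulled back from individual copies of $Y$, the factorwise pushforward of those classes stays inside $\mathrm{R}^*(Y)$ and, crucially, that the remaining diagonal classes $p_{j,k}^*\Delta_Y$ connecting a collapsed index $j\notin\phi(I)$ to a surviving index $k\in\phi(I)$ push forward to $p_k^*(\text{something in }\mathrm{R}^*(Y))$ — this is exactly the statement that $\mathrm{R}^*(Y^2)$ is stable under the projections $Y^2\to Y$, which is built into Definition~\ref{def:tauto}. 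Everything else is bookkeeping with the projection formula and the self-intersection formula for (partial) diagonals.
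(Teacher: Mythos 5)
Your proof is correct and essentially the same as the paper's: $f^*$ is immediate on generators, and for $f_*$ you factor $\phi$ into a surjection followed by an injection, treating the partial-diagonal embedding by writing each tautological class on $Y^J$ as the restriction of a tautological class on $Y^I$ (the paper does this via a section of $\phi$) and then applying the projection formula, using that the partial-diagonal fundamental class is a product of big-diagonal classes. The only differences are cosmetic: you additionally sketch the pushforward along projections, which the paper explicitly leaves to the reader as not needed later, and your remark that stability under the projections $Y^2\to Y$ is ``built into'' Definition~\ref{def:tauto} is a slight misattribution --- it is an easy instance of the lemma itself, \emph{e.g.} $(p_1)_*(\Delta_Y\cdot p_2^*x)=x$ --- but this does not affect the argument.
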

	  
	  \begin{proof}
	  The fact that tautological rings are preserved by $f^{*}$ is clear from the definition. Let us show that they are preserved by $f_{*}$. By writing $\phi$ as a composition of a surjective map and an injective map, it is enough to show the lemma in these two cases separately.
	  
	  When $\phi$ is surjective, $f: Y^{J}\hookrightarrow Y^{I}$ is a partial diagonal embedding. Choosing a section of $\phi$ gives rise to a projection $p: Y^{I}\to Y^{J}$ such that $p\circ f=\id_{Y^{J}}$. For any $\alpha\in \mathrm{R}^{*}(Y^{J})$, we have 
	  $$f_{*}(\alpha)=f_{*}(f^{*}(p^{*}(\alpha)))=p^{*}(\alpha)\cdot f_{*}(1_{Y^{J}})$$
	  by the projection formula, where $1_{Y^{J}}$ is the fundamental class of $Y^{J}$. It is clear that $p^{*}(\alpha)$ and $f_{*}(1_{Y^{J}})$ are both in $\mathrm{R}^{*}(Y^{I})$.
	  
	  We leave to the reader the proof in the case where $\phi$ is injective ($f: Y^{J}\twoheadrightarrow Y^{I}$ is then a projection), which is not needed later.
	    \end{proof}

	\begin{defn} [Condition $(\star_r)$\,; {\cite[Definition~5.6]{FLV}}]\label{defn:star} Let $E$ be a vector bundle on a variety~$P$. Given an integer $r\in\N$, we say that the pair $(P, E)$ satisfies condition $(\star_r)$ if for any $r$ distinct points
$x_1,\ldots,x_r\in P$, the evaluation map
  $$\HH^0(P, E) \rightarrow  \bigoplus_{i=1}^r E({x_i})$$
  is surjective, where $E(x)$ denotes the fiber of $E$ at $x$\,; or equivalently, $\HH^{0}(P, E\otimes I_{x_{1}}\otimes\dots\otimes I_{x_{r}})$ is of codimension $r\cdot \operatorname{rank}(E)$ in $\HH^{0}(P, E)$. Clearly, $(\star_r)$ implies $(\star_k)$ for all $k<r$. Note that condition $(\star_1)$  is exactly the global generation of $E$.
\end{defn}			
		
\begin{prop}[Generic \emph{vs.}~Tautological]\label{strat}
Let $P$ be a smooth projective variety with trivial Chow groups and $E$ a globally generated vector bundle on $P$. Denote $\bar{B}:=\PP \HH^0(P, E)$. Let $B$ be a Zariski open subset of $\bar{B}$ parameterizing smooth zero loci of sections of $E$ of dimension $\dim(P)-\operatorname{rank}(E)$. Let $\YY\to \bar{B}$ be the universal family. Assume $(P, E)$ satisfies condition $(\star_r)$. Then
  \[ \operatorname{GDCH}^\ast_B(Y^r) = \mathrm{R}^\ast(Y^r) \]
  for any fiber $Y$ of $\YY\to \bar{B}$ over a closed point of $B$.	Here, $ \operatorname{GDCH}^\ast_B(Y^r)$ is as in~\eqref{E:Fm}.
\end{prop}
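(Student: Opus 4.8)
The plan is to prove both inclusions $\operatorname{GDCH}^\ast_B(Y^r) \subseteq \mathrm{R}^\ast(Y^r)$ and $\mathrm{R}^\ast(Y^r) \subseteq \operatorname{GDCH}^\ast_B(Y^r)$. The second inclusion is the easy one: the relative diagonals $\Delta_{\YY/\bar B} \subset \YY \times_{\bar B} \YY$ and, more generally, the relative partial diagonals in $\YY^r_{/\bar B}$ are globally defined cycles restricting fiberwise to the partial diagonals $\Delta_Y$; the cycles pulled back from $\CH^\ast(P^r)$ are globally defined since $P^r \times \bar B \to \bar B$ sits over the family and $Y^r \hookrightarrow P^r$; and the Chern classes of $T_Y$ are restrictions of classes on $P$ (as noted in Definition~\ref{def:tauto}, since $Y$ is a dimensionally transverse zero locus of $E$). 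Hence every generator of $\mathrm{R}^\ast(Y^r)$ lies in $\operatorname{GDCH}^\ast_B(Y^r)$, and since the latter is a subring, $\mathrm{R}^\ast(Y^r) \subseteq \operatorname{GDCH}^\ast_B(Y^r)$.

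The main work is the inclusion $\operatorname{GDCH}^\ast_B(Y^r) \subseteq \mathrm{R}^\ast(Y^r)$, and here I would follow the stratification argument of \cite[Proposition~5.7]{FLV}. The idea: take a cycle $Z \in \CH^\ast(\YY^r_{/\bar B})$ and restrict it to a very general fiber $Y^r$; we must show the restriction is tautological. Consider the incidence variety $\widetilde{\YY^r} \subset \bar B \times P^r$ parameterizing a section $s$ together with an $r$-tuple of points in the zero locus $Y_s$; equivalently, stratify $P^r$ by the partial-diagonal type of the $r$-tuple $(x_1,\dots,x_r)$, i.e.\ by which of the points coincide. Over the open stratum $U \subset P^r$ where the $x_i$ are pairwise distinct, condition $(\star_r)$ guarantees that the evaluation map $\HH^0(P,E) \to \bigoplus_i E(x_i)$ is surjective, so the fibration $\{(s, x_1,\dots,x_r) : x_i \in Y_s \text{ for all } i\} \to U$ is a projective bundle (a sub-projective-bundle of the trivial $\bar B$-bundle, cut out by $r\cdot\operatorname{rank}(E)$ linear conditions of constant rank). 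Since $P$ has trivial Chow groups, so does $P^r$ and so do the strata and their closures; and the Chow groups of a projective bundle over such a base are generated by the Chow groups of the base together with the relative $\mathcal{O}(1)$. One then identifies $\CH^\ast$ of this incidence variety — hence, by the localization exact sequence applied stratum by stratum, the image of $\CH^\ast(\widetilde{\YY^r})$ — in terms of tautological classes: classes pulled back from $P^r$, the hyperplane class (which corresponds to the universal section and thus matches the relative $\mathcal{O}(1)$), and the classes supported on the closed strata, which are pushforwards of tautological classes along partial diagonal embeddings and hence tautological by Lemma~\ref{preserved}.

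Concretely, the induction is on the stratification of $P^r$ by partial diagonals, ordered by refinement. For the deepest stratum (full diagonal, $P \hookrightarrow P^r$) the incidence variety is, after using $(\star_1)$, a projective bundle over $P$, so its Chow group maps onto a subring of $\mathrm{R}^\ast(Y)$ (embedded diagonally), which is tautological. For a general stratum $P^{J} \hookrightarrow P^r$ indexed by a partition of $\{1,\dots,r\}$ of size $|J| \le r$, condition $(\star_r)$ — which implies $(\star_{|J|})$ — makes the restricted incidence variety a projective bundle over (an open subset of) $P^{|J|}$; its Chow classes are generated by pullbacks from $P^{|J|}$ and the universal hyperplane, all of which push forward to tautological classes on $Y^r$ under the partial diagonal inclusion $Y^{J} \hookrightarrow Y^r$ by Lemma~\ref{preserved}. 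Gluing the strata via the localization sequence $\CH^\ast(\text{closed}) \to \CH^\ast(\text{total}) \to \CH^\ast(\text{open}) \to 0$ and restricting to a very general fiber (where specialization does not matter since everything in sight is defined over $\bar B$) gives $\operatorname{GDCH}^\ast_B(Y^r) \subseteq \mathrm{R}^\ast(Y^r)$.

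The main obstacle — really the only subtle point — is the bookkeeping of the stratification: one must verify that on each stratum the relevant evaluation map has \emph{constant} rank $r\cdot\operatorname{rank}(E)$ (for the open stratum) or the appropriate smaller constant rank (for deeper strata, where coincident points impose fewer independent conditions), so that the incidence variety is genuinely a projective (sub)bundle and the Chow-group computation goes through; and then that the localization sequences glue compatibly so that no non-tautological class can sneak in through a boundary stratum. Condition $(\star_r)$ is exactly tailored to make the open-stratum case work, and $(\star_k)$ for $k < r$ (which follows from $(\star_r)$) handles the deeper strata; the functoriality Lemma~\ref{preserved} is what ensures the boundary contributions remain tautological. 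This is a faithful adaptation of \cite[Proposition~5.7]{FLV} from Mukai models of K3 surfaces to the present generality, and no genuinely new idea is needed beyond replacing "K3 surface" by "dimensionally transverse zero locus of $E$ on $P$" and tracking the powers.
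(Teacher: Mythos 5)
Your proposal takes essentially the same route as the paper: the paper's proof is exactly this stratification-by-coincidence-type argument adapted from \cite[Proposition~5.7]{FLV}, with the open stratum handled by $(\star_r)$ and triviality of $\CH^*(P^r)$, the boundary (partial-diagonal) strata handled by induction together with the functoriality Lemma~\ref{preserved}, and the easy inclusion $\mathrm{R}^\ast(Y^r)\subseteq\operatorname{GDCH}^\ast_B(Y^r)$ left implicit. The one ``subtle point'' you flag --- that the stratum-by-stratum projective-bundle/localization analysis glues so as to control the image of $\CH^\ast(\YY^r_{/\bar B})$ in the Chow ring of an arbitrary (not just very general) fiber --- is precisely what the paper delegates to the stratified projective bundle result \cite[Proposition~5.2]{FLV}, so your outline is correct and matches the published argument.
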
	  

\begin{proof} This is an adaptation of \cite[Proposition 5.7]{FLV}, which relies on the notion of {\em stratified projective bundle}~\cite[Definition 5.1]{FLV}. Let $q\colon \YY^{r}_{/\bar{B}}\to P^r$ be the natural projection. The morphism $q$ is a stratified projective bundle, where the strata of $P^r$ are defined by the different types of incidence relations of $r$
points in $P$\,:
  \[ \xymatrix{
\YY_{m}=\YY\ar@{^{(}->}[r]\ar[d]^{q_{m}}\cart& \ \cdots \  \ar@{^{(}->}[r] \cart& \YY_{1}\ar@{^{(}->}[r] \ar[d]^{q_{1}} \cart& \YY_{0}=\YY^{r}_{/\bar{B}} \ar[d]^{q_{0}=q} \ar[r]& \bar{B}\\
T_{m}=P \ar@{^{(}->}[r]& \ \cdots \ \ar@{^{(}->}[r]& T_{1}\ar@{^{(}->}[r]& T_{0}=P^{r}&
.}\]
By definition, this means that $q_i : \YY_i \backslash \YY_{i+1} \to T_i\backslash T_{i+1}$, the restriction of $q$ to $T_i\backslash T_{i+1}$, is a projective bundle.
Let us write $\YY_i^\prime$ for the Zariski closure of $\YY_i\setminus\YY_{i+1}$.
Let $Y_i$ (resp. $Y$) denote the fiber of the morphism $\YY_i^\prime\to \bar{B}$ (resp. $\YY\to \bar{B}$) over a closed point $b\in B$, hence $Y_{0}=Y^{r}$. Let $\iota_i\colon Y_i\hookrightarrow Y^r$ denote the natural inclusion.
An application of \cite[Proposition 5.2]{FLV} (which holds for any stratified projective bundle) gives that the generically defined cycles $\operatorname{GDCH}^\ast_B(Y^r):=\ima\bigl(\CH^\ast(\YY^{r}_{/\bar{B}})\to \CH^\ast(Y^r)\bigr)$ can be expressed as follows\,:
  \begin{equation}\label{stratacontribute} \operatorname{GDCH}^\ast_B(Y^r)=\sum_{i=0}^{m}(\iota_{i})_\ast\ima\left({q_{i}|_{Y_{i}}}^{*}: \CH^\ast(T_{i})\to \CH^\ast(Y_{i})\right)\ .
  \end{equation} 
 We proceed to show inductively (just as in \cite[Proof of Proposition 5.7]{FLV}) that each term on the right-hand side of (\ref{stratacontribute}) is in the tautological ring $\mathrm{R}^\ast(Y^r)$\,:

 \noindent $\bullet$ For $i=0$, this follows simply from the fact that $\CH^{*}(T_0)=\CH^{*}(P^r)\cong\CH^{*}(P)^{\otimes r}$.

\noindent $\bullet$ Assume a general point of $T_{i}$ parameterizes $r$ points of $P$ with at least two of them coinciding. Then the contribution of the $i$-th summand of (\ref{stratacontribute}) factors through $\operatorname{GDCH}^{*}_B(Y^{r-1})$ \emph{via} the diagonal push-forward. By the induction hypothesis, this is contained in the diagonal push-forward of $\mathrm{R}^\ast(Y^{r-1})$, hence by Lemma~\ref{preserved} is contained in $\mathrm{R}^{*}(Y^{r})$.

\noindent $\bullet$ Assume a general point of $T_{i}$ parameterizes $r$ distinct points of $P$. In that case, the condition $(\star_r)$ guarantees that the codimension of
 $\YY^\prime_i$ in  $\YY_{i-1}$ is equal to $\codim_{T_{i-1}} (T_{i})$.
  The excess intersection formula (\cite[\S 6.3]{MR1644323}), applied to the cartesian square
\begin{equation*}
\xymatrix{
\YY_{i}=\YY_{i+1}\cup \YY^\prime_{i}\ar@{^{(}->}[r] \ar[d]  \cart& \YY_{i-1}\ar[d]\\
T_{i} \ar@{^{(}->}[r]& T_{i-1}\ ,
}
\end{equation*}
tells us that modulo the $(i+1)$-th term of \eqref{stratacontribute}, the contribution of the $i$-th term is contained in the $(i-1)$-th term. 

\noindent $\bullet$ Finally, the contribution of the $i=m$ term of (\ref{stratacontribute}) is the push-forward of $\mathrm{R}^{*}(Y)$, \emph{via} the small diagonal embedding $Y\hookrightarrow Y^{m}$. This is contained in $\mathrm{R}^{*}(Y^{m})$ by Lemma~\ref{preserved}.
\end{proof}

	\subsection{Multiplicative Chow--K\"unneth decomposition for smooth cubic hypersurfaces}\label{SS:MCK}
	Let  $Y\subset \PP^{n+1}$ be a smooth hypersurface of degree $d$. Recall that if $h := c_1(\mathcal{O}_{\PP^{n+1}}(1)|_{Y}) \in \CH^1(Y)$ denotes the hyperplane section, then the correspondences
		\begin{equation}\label{eq:CKabsolute}
	\pi^{2i} := \frac{1}{d} h^{n-i} \times h^i, \quad 2i\neq n, 	\quad	
	\text{and} \quad \pi^n := \Delta_{Y} - 	\sum_{2i\neq n} \pi^{2i}
	\end{equation}
	in $\CH^n(Y\times Y)$ define a Chow--K\"unneth decomposition, \emph{i.e.}, a decomposition of the Chow motive of $Y$ as a direct sum of summands $\h^i(Y):=(Y,\pi^i)$ with cohomology of pure weight $i$\,; see \emph{e.g.}~\cite[Appendix~C]{MNP}. 
	\medskip
	
The following theorem is stated explicitly in~\cite[Theorem~4.4]{FLV-MCK}, although it can be deduced from a result of~\cite{Diaz} (see \cite[Remark~4.13]{FLV-MCK})\,:

	\begin{thm}\label{thm:MCKcubic} 
	Suppose $Y$ is a smooth cubic $n$-fold.
Then the Chow--K\"unneth decomposition \eqref{eq:CKabsolute} is
multiplicative\,; \emph{i.e.}, the cycles $\pi^k \circ \delta_Y \circ (\pi^i \otimes \pi^j)$ vanish in $\CH^{2n}(Y\times Y \times Y)$ for all $k\neq i+j$.
	\end{thm}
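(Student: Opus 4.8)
The plan is to verify the multiplicativity of the Chow–Künneth decomposition \eqref{eq:CKabsolute} by a direct computation of the triple composition $\pi^k \circ \delta_Y \circ (\pi^i \otimes \pi^j)$, reducing everything to the structure of the small diagonal $\delta_Y \in \CH^{2n}(Y^3)$. Recall that by definition $\delta_Y \circ (\pi^i\otimes\pi^j)$, composed further with $\pi^k$, computes the component of $\delta_Y$ in the Künneth piece $\h^i(Y)\otimes\h^j(Y)\otimes\h^{2n-k}(Y)$ (up to the usual identification of correspondences), so the assertion is equivalent to showing that $\delta_Y$, viewed via the decomposition $\h(Y)^{\otimes 3} = \bigoplus_{a,b,c}\h^a(Y)\otimes\h^b(Y)\otimes\h^c(Y)$, has no component in any summand with $a+b+c \neq 2n$ other than what is forced — more precisely, that the "mixed" components where the third index does not match the sum of the first two vanish. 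The key input is an explicit formula for $\delta_Y$ in $\CH^{2n}(Y\times Y\times Y)$ in terms of $h$ and the (absolute and relative) primitive parts: for a smooth cubic hypersurface this is exactly the content of the computations in \cite{Diaz} and \cite[\S4]{FLV-MCK}, where the class of the small diagonal is written as a sum of a "totally tautological" part built from powers of $h$ on the three factors, plus terms involving the primitive diagonal $\pi^n$ on pairs of factors, and one knows these terms are supported in the expected Künneth bidegrees.

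First I would set up notation: decompose $\Delta_Y = \sum_i \pi^{2i} + \pi^n$ and note that the $\pi^{2i}$ for $2i\neq n$ are products of powers of $h$, for which multiplicativity-type relations follow immediately from $h^{n+1} = 0$ on $Y$ together with the projection formula — any composition involving two or more of these "Lefschetz" projectors lands in the correct piece automatically, because $h^a\times h^b$ composed through $\delta_Y$ just multiplies classes on the diagonal. So the only compositions that require genuine work are those involving $\pi^n$ in at least one (and the interesting case, two or three) of the slots. Second, I would invoke the explicit description of $\delta_Y$: for cubic hypersurfaces, the excess/key relation is that the restriction of $\delta_Y$ to $\h^n(Y)^{\otimes 3}$ vanishes (there is no degree-$(n,n,n)$ primitive component in $\CH$ when $3n \neq 2n$, i.e., always, for $n\geq 1$), and the restriction to $\h^n(Y)^{\otimes 2}\otimes \h^{2i}(Y)$-type pieces is controlled by the fact that on a cubic the Fano correspondence / the incidence relation expresses $\pi^n\circ(\text{something involving }h)$ in closed form. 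Concretely, $\delta_Y$ restricted to $\pi^n\otimes\pi^n\otimes\pi^{2i}$ is computed by pushing forward $(\pi^n\otimes\pi^n)$-correspondences, and one checks this equals the component of $\delta_Y$ of codimension matching $2n$ only when $i = n - n = 0$... — this is the arithmetic bookkeeping that the theorem encodes, and it is exactly Diaz's computation.

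The main obstacle, and the step where the real geometry of cubics enters, is establishing the vanishing of the genuinely mixed primitive component of $\delta_Y$ — equivalently, proving that $\pi^n \circ \delta_Y \circ (\pi^n \otimes \pi^j) = 0$ for $j$ even (so $2n - k \neq n$ forces $k$ odd, etc.) and the triple-primitive relation $\pi^n\circ\delta_Y\circ(\pi^n\otimes\pi^n)$ has the right weight. Here the point special to cubics is that the motive $\h^n(Y)_{\mathrm{prim}}$ behaves, for the purposes of the intersection product, like the $H^2$ of a "K3-type" object: the cup product $H^n_{\mathrm{prim}}\otimes H^n_{\mathrm{prim}} \to H^{2n}$ factors through the algebraic part, and the refined statement is that this already holds at the level of Chow groups for generically defined cycles — which is precisely where one uses that a cubic hypersurface is the zero locus of a section of a bundle on $\PP^{n+1}$ and Proposition~\ref{strat}, or alternatively the self-intersection computations of \cite{FLV-MCK}. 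Since the statement is quoted from \cite[Theorem~4.4]{FLV-MCK} (with the alternative derivation from \cite{Diaz} noted), I would in practice simply cite that computation rather than redo it; the contribution of the present paragraph is to organize it as: (1) Lefschetz pieces are automatic, (2) the interesting compositions reduce to the primitive-diagonal relations for cubics, (3) those are exactly \cite{Diaz}/\cite[\S4]{FLV-MCK}.
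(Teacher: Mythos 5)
Your proposal matches the paper's own treatment: the paper does not reprove this theorem but quotes it from \cite[Theorem~4.4]{FLV-MCK} (where it is proved via the Franchetta property for the universal Fano variety of lines and its relative square), noting the alternative derivation from \cite{Diaz} — precisely the sources you defer to for the key primitive-diagonal relation. Your preliminary reduction (compositions involving the algebraic projectors are automatic for any smooth hypersurface, so only the relation involving $\pi^n\otimes\pi^n$ requires the cubic geometry) is exactly the content of the paper's Proposition~\ref{prop:MCKcubic2}, so the approach is essentially the same.
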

	
	The notion of \emph{multiplicative Chow--K\"unneth (MCK) decomposition} was introduced by Shen--Vial~\cite{SV}.
	 While the existence of a Chow--K\"unneth decomposition is expected for all smooth projective varieties --- in which case the cycles $\pi^k \circ \delta_Y \circ (\pi^i \otimes \pi^j)$ vanish in $\HH^{4n}(Y\times Y \times Y)$ for all $k\neq i+j$ due to the fact that the cohomology ring $\HH^*(Y,\QQ)$ is graded, there are examples of varieties that do not admit such a \emph{multiplicative} decomposition\,; it was however conjectured~\cite[Conjecture~4]{SV}, following the seminal work of Beauville and Voisin \cite{BVK3}, that all hyper-K\"ahler varieties admit such a decomposition. We refer to \cite{FLV-MCK} for a list of hyper-K\"ahler varieties for which the conjecture has been established, as well as for some evidence that Fano varieties of cohomological K3 type (\emph{e.g.}\ smooth cubic fourfolds) should also admit such a decomposition, but also for examples of varieties not admitting such a decomposition.
	
	A new proof of Theorem~\ref{thm:MCKcubic} is given in \cite{FLV-MCK}. The strategy consists in reducing to the Franchetta property for
	the universal family $\FF\to B$ 
	of Fano varieties of lines in smooth cubic hypersurfaces and its relative square. Note that, in that reduction step, we in fact
	established the Franchetta property for $\YY\to B$ and for $\YY\times_B \YY \to
	B$. \medskip
	
	That the canonical Chow--K\"unneth decomposition~\eqref{eq:CKabsolute} of a hypersurface be multiplicative can be spelled out explicitly as follows. (Note that the expression \eqref{E:mck} with $d=3$ corrects the expression for the cycle~$\gamma_3$ in \cite[\S 2]{Diaz}.)
	
	\begin{prop}[MCK relation]\label{prop:MCKcubic2}
		Let $Y$ be a smooth  hypersurface in $\PP^{n+1}$ of degree $d$ and let $h:=
		c_1(\mathcal{O}_{\PP^{n+1}}(1)|_Y)$. Then the Chow--K\"unneth decomposition~\eqref{eq:CKabsolute} is multiplicative if and only if we have the following identity in $\CH^{2n}(Y\times Y\times Y)$, which will be subsequently referred to as the \emph{MCK relation}\,:
		\begin{align}\label{E:mck}
		\delta_Y = & \  \frac{1}{d} \big( p_{1,2}^*\Delta_Y\cdot p_3^*h^n +
		p_{1,3}^*\Delta_Y\cdot p_2^*h^n + p_{2,3}^*\Delta_Y \cdot p_1^*h^n \big) \nonumber \\
		& - \frac{1}{d^2} \big( p_{1}^*h^n\cdot p_2^*h^n + p_{1}^*h^n\cdot p_3^*h^n 
		+p_{2}^*h^n\cdot p_3^*h^n \big) \\
		& + \frac{1}{d^2} \sum_{\substack{i+j+k = 2n\\0<i,j,k<n}} p_{1}^*h^i\cdot p_2^*h^j\cdot p_3^*h^k,\nonumber
		\end{align}
		where $p_i$ and $p_{j,k}$ denote the various projections from $Y^3$ to
		$Y$
		and to $Y^2$.
	\end{prop}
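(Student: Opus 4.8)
The plan is to unwind the definition of multiplicativity for the canonical Chow--K\"unneth decomposition and carry out the computation on both sides. Recall that, by definition, \eqref{eq:CKabsolute} is multiplicative if and only if $\pi^k \circ \delta_Y \circ (\pi^i \otimes \pi^j) = 0$ in $\CH^{2n}(Y^3)$ for all $k \neq i+j$. Summing over all triples $(i,j,k)$ with $k \neq i+j$ and using that $\sum_i \pi^i = \Delta_Y$, this is equivalent to the single identity $\delta_Y = \sum_{i+j+k=2n}\pi^k \circ \delta_Y \circ (\pi^i \otimes \pi^j)$, where now the sum is over all triples with $i+j+k=2n$ (the ``diagonal'' triples), since the total sum over all $(i,j,k)$ of these compositions gives back $\delta_Y$. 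So the first step is to compute, for each triple $(i,j,k)$ with $i+j+k=2n$, the cycle $\pi^k \circ \delta_Y \circ (\pi^i \otimes \pi^j)$ explicitly.

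The second step is the actual computation of these compositions. Here one splits into cases according to whether the indices are equal to $n$ or not. If $2i \neq n$, then $\pi^{2i} = \frac1d h^{n-i}\times h^i$ acts on $\h(Y)$ by sending a class $\alpha$ to $\frac1d (\deg(\alpha \cdot h^{i})) \cdot h^{n-i}$ (after the appropriate Poincar\'e-type pairing), so these summands produce the terms involving only powers of $h$ pulled back to the various factors; tracking the combinatorics of which factor carries which power of $h$ produces exactly the third line of \eqref{E:mck}, and the cross-terms with one index equal to $n$ and using $\pi^n = \Delta_Y - \sum_{2i\neq n}\pi^{2i}$ produce the mixed terms in the first two lines. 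The term with $i=j=k$ impossible unless $3\mid 2n$; the genuinely ``transcendental'' contribution is $\pi^n \circ \delta_Y \circ (\pi^n \otimes \pi^n)$ together with the terms where exactly one or two of the indices equal $n$. One substitutes $\pi^n = \Delta_Y - \frac1d\sum_{2i\neq n} h^{n-i}\times h^i$ into $\pi^k \circ \delta_Y \circ (\pi^i\otimes\pi^j)$ and expands; the $\delta_Y$ against $\delta_Y$ piece gives back $\delta_Y$ on the left, and the correction terms, after applying Lieberman's formula for composition of correspondences and the projection formula, reassemble into precisely the right-hand side of \eqref{E:mck}. Keeping careful track of the factor $\frac1d$ coming from each $\pi^{2i}$ with $2i \neq n$, and the $\frac1{d^2}$ coming from pairs of such, yields the stated coefficients; this is the place where the correction to Diaz's formula for $\gamma_3$ (the sign/coefficient fix noted in the parenthetical remark) enters.

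The main obstacle is purely bookkeeping: organizing the sum over all triples $(i,j,k)$ with $i+j+k = 2n$ and $0 < i,j,k$ not all forced, distinguishing the subcases where one, two, or three of the indices equal $n$, and verifying that the over-counting / inclusion--exclusion from expanding each $\pi^n = \Delta_Y - \sum\pi^{2i}$ collapses to exactly the three groups of terms in \eqref{E:mck} with the correct rational coefficients. There is no conceptual difficulty once the convention for composition of correspondences (say, $\beta \circ \alpha = p_{1,3,*}(p_{1,2}^*\alpha \cdot p_{2,3}^*\beta)$) and the identities $h^{n-i}\cdot h^{i'} $ restricted appropriately are fixed; one should also note that $h^{n+1} = 0$ on $Y$ is never needed since all indices stay in range $0 \le i \le n$, but $\deg(h^n) = d$ on a degree-$d$ hypersurface is used repeatedly and accounts for every factor of $d$. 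Finally, since each step is an equivalence (the multiplicativity condition $\iff$ the displayed reassembly $\iff$ the MCK relation \eqref{E:mck}), one obtains the ``if and only if'' as claimed, with Theorem~\ref{thm:MCKcubic} then being the assertion that this identity actually holds when $d = 3$.
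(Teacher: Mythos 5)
Your pivotal reduction step is incorrect: for the decomposition \eqref{eq:CKabsolute}, multiplicativity means $\pi^k\circ\delta_Y\circ(\pi^i\otimes\pi^j)=0$ for all $k\neq i+j$, so the ``diagonal'' triples are those with $k=i+j$, \emph{not} those with $i+j+k=2n$. The two conditions get interchanged exactly when one confuses the composition $\pi^k\circ\delta_Y\circ(\pi^i\otimes\pi^j)$ with the K\"unneth component $(\pi^i\otimes\pi^j\otimes\pi^k)_*\delta_Y$: since ${}^t\pi^i=\pi^{2n-i}$ here, the components of $\delta_Y$ in total degree $4n$ correspond to compositions with $k=i+j$. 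Your remark that ``$i=j=k$ is impossible unless $3\mid 2n$'' shows this is not a mere typo. And with your indexing the claimed equivalence is simply false: applying the operator $\gamma\mapsto\pi^0\circ\gamma\circ(\pi^0\otimes\pi^0)$ to your proposed identity $\delta_Y=\sum_{i+j+k=2n}\pi^k\circ\delta_Y\circ(\pi^i\otimes\pi^j)$ gives $\frac{1}{d^2}\,p_1^*h^n\cdot p_2^*h^n$ on the left (this triple is diagonal in the correct sense) and $0$ on the right (the triple $(0,0,0)$ is excluded by $i+j+k=2n$, and all other summands die by orthogonality of the projectors), a contradiction for every smooth hypersurface. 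So the identity you propose to expand is not equivalent to multiplicativity, and no bookkeeping can reassemble it into \eqref{E:mck}.

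The underlying strategy can be repaired: multiplicativity is equivalent to $\delta_Y=\sum_{k=i+j}\pi^k\circ\delta_Y\circ(\pi^i\otimes\pi^j)$, where for the converse direction you must add the (easy but necessary) observation that the operators $\gamma\mapsto\pi^k\circ\gamma\circ(\pi^i\otimes\pi^j)$ are mutually orthogonal idempotents, so the vanishing of the sum of off-diagonal pieces forces each to vanish. A brute-force expansion of the corrected sum, substituting $\pi^n=\Delta_Y-\sum_{2i\neq n}\pi^{2i}$, does then lead to \eqref{E:mck}, but the reassembly is more delicate than you describe: the all-algebraic diagonal terms do not give only the third line (they also produce monomials with an exponent equal to $0$ or $n$, which must cancel against terms from the expansion of $\pi^n$, and this cancellation is what creates the minus signs in the second line). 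Note also that this is not the paper's route: there, multiplicativity (for $Y$ with nonzero primitive cohomology) is first reduced to the single identity $\pi^{2n}\circ\delta_Y\circ(\pi^n\otimes\pi^n)=\delta_Y\circ(\pi^n\otimes\pi^n)$, the coefficient of the $\Delta_Y$-terms is pinned down by projecting to $Y^2$ and using that $\Delta_Y$ is not a polynomial in $h_1,h_2$, and the remaining coefficients are computed using the excess-intersection identity \eqref{eqn:RelationX2}, with the case of hypersurfaces without primitive cohomology treated separately; your expansion, once based on the correct diagonal condition, would be a more computational but legitimate alternative.
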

	\begin{proof} 
		 Let us define $\pi^{2i}_{\mathrm{alg}} := \frac{1}{d} h^{n-i}\times h^i$ for $0 \leq i \leq n$\,; these coincide with $\pi^{2i}$ if $2i\neq n$. A direct calculation shows that 
		 	\begin{equation}\label{E:algMCK}
		\pi^{2k}_{\mathrm{alg}}\circ \delta_Y \circ (\pi^{2i}_{\mathrm{alg}} \otimes \pi^{2j}_{\mathrm{alg}}) := \left\{
		 \begin{array}{ll}
		0 \qquad \qquad \qquad \qquad \qquad \text{if} \ k\neq i+j\\
		\frac{1}{d^2} \,  p_1^*h^{n-i}\cdot  p_2^*h^{n-j} \cdot p_3^*h^{k}\quad \, \text{if} \ k=i+j.
		 \end{array}
		 \right.
		 \end{equation}
		 In addition, if $k\neq i+j$, we have
		 $$\pi^{2k}_{\mathrm{alg}}\circ \delta_Y \circ (\pi^{2i}_{\mathrm{alg}} \otimes \pi^{2j}_{\mathrm{alg}}) = \Delta_Y \circ \delta_Y \circ(\pi^{2i}_{\mathrm{alg}} \otimes \pi^{2j}_{\mathrm{alg}}) = \pi^{2k}_{\mathrm{alg}}\circ \delta_Y \circ (\Delta_Y \otimes \pi^{2j}_{\mathrm{alg}})= \pi^{2k}_{\mathrm{alg}}\circ \delta_Y \circ (\pi^{2i}_{\mathrm{alg}} \otimes \Delta_Y).$$ 
		Assume temporarily that $Y$ has no primitive cohomology, \emph{i.e.}, $Y$ has degree 1 or $Y$ is an odd-dimensional quadric. In that case,  we have $\Delta_Y = \sum_i \pi^{2i}_{\mathrm{alg}}$\,; in particular by~\eqref{E:algMCK} the Chow--K\"unneth decomposition~\eqref{eq:CKabsolute} is multiplicative. Furthermore, from $$\Delta_Y =  \sum_i \pi^{2i}_{\mathrm{alg}} = \frac{1}{d}\sum_{i+j=n} p_1^*h^i\cdot p_2^*h^j,$$ we obtain $$\delta_Y = p_{1,2}^*\Delta_Y \cdot p_{1,3}^*\Delta_Y = \frac{1}{d^2}\, \sum_{i+j+k=2n} p_1^*h^i\cdot p_2^*h^j\cdot p_3^*h^k.$$ Combining the above two expressions for $\Delta_Y$ and $\delta_Y$ yields the relation~\eqref{E:mck}.
	
	\noindent	 From now on, we therefore assume that $Y$ has non-trivial primitive cohomology. In that case, the above relations imply that the Chow--K\"unneth decomposition~\eqref{eq:CKabsolute} is multiplicative if and only if 	$$\pi^{2n}\circ \delta_Y \circ (\pi^n \otimes \pi^n) =  \delta_Y \circ (\pi^n
		\otimes \pi^n).$$
		Substituting $\pi^n = \Delta_Y - \sum_{i\neq n} \pi^i$ into the identity
		$\pi^{2n}\circ \delta_Y \circ (\pi^n \otimes \pi^n) =  \delta_Y \circ (\pi^n
		\otimes \pi^n)$, and developing, yields an identity of the form 
		$$\delta_Y = \lambda\, \big( p_{1,2}^*\Delta_Y\cdot p_3^*h^n +
		p_{1,3}^*\Delta_Y\cdot p_2^*h^n + p_{2,3}^*\Delta_Y \cdot p_1^*h^n \big) +
		Q(p_1^*h, p_2^*h,p_3^*h)$$ 
		in $\CH^{2n}(Y\times Y\times Y)$, where $\lambda$ is a rational number to be
		determined and $Q$ is a symmetric rational polynomial in 3 variables to be
		determined. Projecting on the first two factors yields an identity in
		$\CH^n(Y\times Y)$ 
		$$\Delta_Y = d\lambda\, \Delta_Y + R(p_1^*h,p_2^*h)$$ for some symmetric
		rational polynomial $R$ in 2 variables. Since $\Delta_Y$ is not of the form
		$S(p_1^*h,p_2^*h)$ for some symmetric rational polynomial $S$ in 2 variables
		(otherwise, the cohomology ring of $Y$ would be generated by $h$),
		 we find that
		$\lambda = 1/d$.
		The coefficients of the polynomial $Q$ are then obtained by successively applying $(p_{1,2})_*((-)\cdot p_3^*h^{n-k})$ for $k\geq 0$ and by
		symmetrizing.
		Note that in the above we use the identity
		\begin{equation}\label{eqn:RelationX2}
		\Delta_Y\cdot p_1^*h = \Delta_Y\cdot p_2^*h= \frac{1}{d} \sum_{i+j=n+1}
		p_1^*h^i \cdot p_2^*h^j,
		\end{equation} 
		which is obtained by applying the excess intersection formula  \cite[Theorem
		6.3]{MR1644323} 
		to the following cartesian diagram,
		with excess normal bundle $\mathcal{O}_{Y}(d)$,
		\begin{equation*}
		\xymatrix{
			Y \ar[d]\ar[r]^-{\Delta_{Y}}& Y\times Y\ar[d]\\
			\PP^{n+1} \ar[r]& \PP^{n+1}\times \PP^{n+1}
		}
		\end{equation*}
		together with the relation $\Delta_{\PP^{n+1}}=\sum_{i+j=n+1}p_1^*h^i \cdot
		p_2^*h^j$ in
		$\CH^{n+1}(\PP^{n+1}\times \PP^{n+1})$, where by abuse we have denoted $h$ a
		hyperplane section of both $\PP^{n+1}$ and $Y$.
	\end{proof}
	
	\begin{rmk} \label{R:MCK}
		Using the above-mentioned fact that the cycles $\pi^k \circ \delta_Y \circ (\pi^i \otimes \pi^j)$ vanish in $\HH^{4n}(Y\times Y \times Y)$ for all $k\neq i+j$,
		the proof of Proposition~\ref{prop:MCKcubic2} also establishes that the MCK relation~\eqref{E:mck} holds in $\HH^{4n}(Y\times Y\times Y,\QQ)$ for all smooth hypersurfaces $Y$ in $\PP^{n+1}$. 
	\end{rmk}

	\subsection{On the tautological ring of smooth cubic hypersurfaces}
	\label{SS:cohotauto}
	We first consider a smooth hypersurface $Y\subset \PP^{n+1}$ of any degree $d>0$.
	In what follows, $b_{\mathrm{pr}}(Y)$ denotes the dimension of the middle primitive cohomology 
	$\HH^n(Y,\QQ)_{\prim}$
	of $Y$, \emph{i.e.}, the dimension of the orthogonal complement in $\HH^*(Y, \QQ)$ of the subalgebra generated by the hyperplane section. In this paragraph we are interested in understanding the intersection theory of \emph{tautological cycles} on powers of~$Y$\,:
	
		\begin{ques}\label{conjpowers2} 
  Let $(Y,h)$ be a polarized smooth projective variety\,; \emph{i.e.}~a smooth projective variety equipped with the class $h:= c_1(\mathcal L) \in \CH^1(Y)$ of an ample line bundle $\mathcal L$ on $Y$.
	For $m\in\N$, denote ${\mathrm{R}}^\ast(Y^m)$ the $\QQ$-subalgebra
\[ {\mathrm{R}}^\ast(Y^m):=\langle p_i^\ast h, \ 
p_{k,l}^\ast
\Delta_Y
\rangle\ \ \ \subset\ \CH^\ast(Y^m),\ \]
where $p_i$ and $p_{k,l}$ denote the various projections from $Y^m$ to
$Y$
and to $Y^2$. (In case $Y$ is a hypersurface in $\PP^{n+1}$, this is the tautological ring of $Y^m$ in the sense of Definition~\ref{def:tauto}).
For which integers
$m\in \N$ does ${\mathrm{R}}^*(Y^m)$ inject into $\HH^{2*}(Y^m,\QQ)$ \emph{via} the cycle class
map\,?
	\end{ques}

We note that if $(Y,h)$ satisfies the conclusion of Question~\ref{conjpowers2} for $m=2$, then $\Delta_Y\cdot p_1^*h^{\dim Y}$ must be a rational multiple of $p_1^*h^{\dim Y} \cdot p_2^*h^{\dim Y}$ in $\CH^{2\dim Y}(Y\times Y)$. This fails for a very general curve of genus $\geq 4$ where $h$ is taken to be the canonical divisor\,; see \cite{GG} and \cite{Yin-FP}.  Furthermore, the MCK relation~\eqref{E:mck} does not hold for a very general curve of genus~$\geq 3$ (see \cite[Proposition~7.2]{MR4028066})
so that the conclusion of Question~\ref{conjpowers2} for $m=3$ fails  for a very general curve of genus~$\geq 3$. Note however that Tavakol~\cite{Tavakol2} has showed that any hyperelliptic curve (in particular any curve of genus 1 or 2) satisfies the conclusion of Question~\ref{conjpowers2} for all~$m\geq 2$.

Proposition~\ref{proppowers2} below, which parallels \cite{Tavakol2} in the case of hyperelliptic curves and \cite{YinK3} in the case of K3 surfaces, shows that for a Fano or Calabi--Yau hypersurface $Y$ the only non-trivial relations among tautological cycles in powers of $Y$ are given by the MCK relation~\eqref{E:mck} and the finite-dimensionality relation. As a consequence, we obtain in Corollary~\ref{cor:TautoCubic} that any Kimura--O'Sullivan finite-dimensional cubic hypersurface (\emph{e.g.}, cubic threefolds and cubic fivefolds -- conjecturally all cubic hypersurfaces in any positive dimension since smooth projective varieties are conjecturally Kimura--O'Sullivan finite-dimensional, \emph{cf.}~\cite{KimuraFD}) satisfies the conclusion of Question~\ref{conjpowers2} for all $m\geq 2$.

	\begin{prop}\label{proppowers2} Let $Y$ be a Fano or Calabi--Yau smooth hypersurface in $\PP^{n+1}$\,; \emph{i.e.}, a hypersurface in $\PP^{n+1}$ of degree $\leq n+2$.
		 Then
		\begin{enumerate}[(i)]
\item 	Question~\ref{conjpowers2} has a positive answer for $m \leq 2$\,;
\item 	Question~\ref{conjpowers2} has a positive answer for all $m \leq 2b_{\mathrm{pr}}(Y)+1$ in case $n$ even and for all $m \leq b_{\mathrm{pr}}(Y)+1$ in case $n$ odd if and only if the Chow--K\"unneth decomposition~\eqref{eq:CKabsolute} is multiplicative\,;
\item Question~\ref{conjpowers2} has a positive answer for all $m$ if and only if the Chow--K\"unneth decomposition~\eqref{eq:CKabsolute} is multiplicative and the Chow motive of $Y$ is  Kimura--O'Sullivan finite-dimensional.
		\end{enumerate}
	\end{prop}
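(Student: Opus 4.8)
The plan is to follow the template of Yin's analysis of the tautological ring of K3 surfaces \cite{YinK3}, itself a refinement of Tavakol's treatment of hyperelliptic curves \cite{Tavakol2}: manufacture an explicit combinatorial ``model'' of the tautological ring, then determine which relations among the generators $p_i^{\ast}h$ and $p_{j,k}^{\ast}\Delta_Y$ are forced by passing to cohomology; the inputs special to hypersurfaces are just \eqref{eqn:RelationX2} and the fact, recorded in Proposition~\ref{prop:MCKcubic2}, that multiplicativity of \eqref{eq:CKabsolute} is encoded by the concrete identity \eqref{E:mck}. Concretely, set $\mathfrak{o}:=\tfrac1d h^n\in\CH^n(Y)$, $h_i:=p_i^{\ast}h$, and let $\tau\in\CH^n(Y\times Y)$ be the primitive part of the diagonal: $\tau:=\pi^n$ if $n$ is odd, and $\tau:=\pi^n-\tfrac1d h^{n/2}\times h^{n/2}$ if $n$ is even, with $\pi^n$ as in \eqref{eq:CKabsolute}; then $p_{j,k}^{\ast}\Delta_Y$ differs from $\tau_{j,k}:=p_{j,k}^{\ast}\tau$ by an explicit polynomial in $h_j,h_k$, so that $\mathrm{R}^{\ast}(Y^m)$ is generated by the $h_i$ and the $\tau_{j,k}$. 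The first step is to record the relations valid in $\CH^{\ast}(Y^m)$ for \emph{every} smooth hypersurface: $(\mathrm a)$ $h_i^{n+1}=0$; $(\mathrm b)$ $h_i\cdot\tau_{i,j}=0$, which follows from \eqref{eqn:RelationX2} exactly as in the proof of Proposition~\ref{prop:MCKcubic2}; and $(\mathrm c)$ $\tau_{i,j}\cdot\tau_{i,j}$ is a rational multiple of $\mathfrak{o}_i\mathfrak{o}_j$ (equal, as one reads off in cohomology, to $\pm b_{\mathrm{pr}}(Y)$ times it), which follows from $\Delta_Y\cdot\Delta_Y=(\Delta_Y)_{\ast}\big(c_n(T_Y)\big)$, the fact that $c_n(T_Y)$ is a polynomial in $h$, and iterated use of \eqref{eqn:RelationX2}. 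For $m\le 2$ these already settle Question~\ref{conjpowers2}: using $(\mathrm a)$ and $(\mathrm b)$ one sees that $\mathrm{R}^{\ast}(Y^2)$ is spanned as a $\QQ$-vector space by $\{h_1^ih_2^j\}_{0\le i,j\le n}$ together with $\tau_{12}$, and these are manifestly linearly independent in $\HH^{\ast}(Y^2,\QQ)$ (the last one lies in $\HH^n(Y)_{\mathrm{prim}}\otimes\HH^n(Y)_{\mathrm{prim}}$, transverse to the algebraic part), which proves $(i)$.

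\textbf{Part $(ii)$.} Introduce the model ring $\mathrm{G}^{\ast}(m)$: the free graded-commutative $\QQ$-algebra on symbols $h_i$ (codimension $1$) and $\tau_{i,j}=\tau_{j,i}$ (codimension $n$), modulo the ideal generated by $(\mathrm a)$--$(\mathrm c)$ and by the \emph{edge-contraction relation} $\tau_{i,j}\cdot\tau_{j,k}=\mathfrak{o}_j\cdot\tau_{i,k}$ for pairwise distinct $i,j,k$. A direct rewriting shows that, modulo $(\mathrm a)$--$(\mathrm c)$, edge-contraction in $\CH^{2n}(Y^3)$ is equivalent to the MCK relation \eqref{E:mck}; hence, by Proposition~\ref{prop:MCKcubic2}, when \eqref{eq:CKabsolute} is multiplicative there is a surjective ring map $\mathrm{G}^{\ast}(m)\twoheadrightarrow\mathrm{R}^{\ast}(Y^m)$ whose composite with the cycle class map is the tautological map $\mathrm{G}^{\ast}(m)\to\HH^{\ast}(Y^m,\QQ)$. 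Thus the ``if'' direction of $(ii)$ reduces to the assertion that this tautological map is \emph{injective} for $m\le 2b_{\mathrm{pr}}(Y)+1$ ($n$ even), resp. $m\le b_{\mathrm{pr}}(Y)+1$ ($n$ odd): injectivity forces the surjection to be an isomorphism and $\mathrm{R}^{\ast}(Y^m)\hookrightarrow\HH^{\ast}(Y^m,\QQ)$. This injectivity is obtained just as in \cite[\S2]{YinK3} (the symmetric-form case, $n$ even) and \cite{Tavakol2} (the alternating-form case, $n$ odd): writing $\HH^{\ast}(Y,\QQ)$ as the algebraic subalgebra $\bigoplus_i\QQ h^i$ plus the $b_{\mathrm{pr}}(Y)$-dimensional summand $\HH^n(Y)_{\mathrm{prim}}$ with its nondegenerate (symmetric, resp. alternating) pairing, one verifies that the monomials in the $h_i,\tau_{j,k}$ surviving $(\mathrm a)$--$(\mathrm c)$ and edge-contraction stay linearly independent in cohomology until $m$ reaches the stated bound, at which point the first new relation is exactly the ``determinant'', resp. ``Pfaffian'', relation encoding $\wedge^{b_{\mathrm{pr}}(Y)+1}\HH^n(Y)_{\mathrm{prim}}=0$, resp. $\Sym^{b_{\mathrm{pr}}(Y)+1}\HH^n(Y)_{\mathrm{prim}}=0$. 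The ``only if'' direction is immediate: \eqref{E:mck} is an identity between tautological cycles on $Y^3$ (its left side $\delta_Y$ equals $p_{1,2}^{\ast}\Delta_Y\cdot p_{2,3}^{\ast}\Delta_Y$) which holds in cohomology for every smooth hypersurface by Remark~\ref{R:MCK}, so if $\mathrm{R}^{\ast}(Y^3)\hookrightarrow\HH^{\ast}(Y^3,\QQ)$ — which happens within the asserted range whenever $b_{\mathrm{pr}}(Y)\ne0$, the case $b_{\mathrm{pr}}(Y)=0$ being trivial — then \eqref{E:mck} holds in $\CH^{\ast}$, i.e. \eqref{eq:CKabsolute} is multiplicative.

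\textbf{Part $(iii)$.} Enlarge the model to $\mathrm{G}^{+}_{\ast}(m)$ by additionally imposing the determinant/Pfaffian relations. By construction $\mathrm{G}^{+}_{\ast}(m)\to\HH^{\ast}(Y^m,\QQ)$ is injective for \emph{all} $m$ — this is the ``full'' form of the combinatorial statement of \cite{YinK3,Tavakol2}, and, like the injectivity above, it does not require any finite-dimensionality. On the other hand, since $\tau$ is an idempotent correspondence, the determinant/Pfaffian relation holds in $\CH^{\ast}(Y^m)$ if and only if the motive $(\h(Y),\tau)$ is Kimura--O'Sullivan finite-dimensional \cite{KimuraFD} — equivalently, $\h(Y)$ is finite-dimensional, its complementary summand being a sum of Lefschetz motives. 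Therefore: if \eqref{eq:CKabsolute} is multiplicative and $\h(Y)$ is finite-dimensional, then $\mathrm{G}^{+}_{\ast}(m)\twoheadrightarrow\mathrm{R}^{\ast}(Y^m)$ and hence $\mathrm{R}^{\ast}(Y^m)\hookrightarrow\HH^{\ast}(Y^m,\QQ)$ for all $m$; conversely, a positive answer to Question~\ref{conjpowers2} for all $m$ yields (from $m=3$) the multiplicativity of \eqref{eq:CKabsolute} and (from $m$ large enough) the determinant/Pfaffian relation in $\CH^{\ast}$, hence the finite-dimensionality of $\h(Y)$.

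\textbf{Main obstacle.} The genuinely delicate step is the linear-independence count that pins down the exact thresholds $2b_{\mathrm{pr}}(Y)+1$ and $b_{\mathrm{pr}}(Y)+1$: one must show that $(\mathrm a)$--$(\mathrm c)$ together with the edge-contraction relation already account for \emph{all} cohomological relations below the threshold, and that the first relation beyond it is precisely the determinant/Pfaffian one. This is exactly the combinatorial input to be transported — with the quadratic (resp. symplectic) form on $\HH^2$ of a K3 surface (resp. $\HH^1$ of a curve) replaced by the form on $\HH^n(Y)_{\mathrm{prim}}$ — from the analyses of \cite{YinK3} and \cite{Tavakol2}; everything else is formal, given \eqref{eqn:RelationX2} and Proposition~\ref{prop:MCKcubic2}.
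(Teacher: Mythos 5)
Your proposal is correct and takes essentially the same route as the paper: your model ring with relations $(\mathrm a)$--$(\mathrm c)$, the edge-contraction relation and the determinant/Pfaffian relation is precisely the presentation of the cohomological tautological ring established in Lemma~\ref{L:Yin'} (relations \eqref{E:X'}--\eqref{E:X4'}), with edge-contraction identified with the MCK relation \eqref{E:mck} via Proposition~\ref{prop:MCKcubic2} and the determinant/Pfaffian relation identified with Kimura--O'Sullivan finite-dimensionality, and with the same appeal to Yin and Tavakol for the combinatorial independence below the stated thresholds. Since the deferred ``main obstacle'' is exactly what the paper handles in the proof of Lemma~\ref{L:Yin'}, there is nothing further to add.
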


	Before giving the proof of the proposition, let us introduce some notations. Let $h$ be the hyperplane section class. We denote $o$
	 the class of $\frac{1}{\deg(Y)}h^{n}\in\CH_{0}(Y)$. 
	 If $Y$ is Fano, $o$ is represented by any point\,; if $Y$ is Calabi--Yau, then $o$ is the canonical 0-cycle studied in \cite{MR2916291}. 
	 For ease of notation, we write $o_i$ and $h_i$ for $p_i^*o$ and $p_i^*h$ respectively, where $p_i
	: Y^m \to Y$ is the projection on the $i$-th factor. Finally we define the following correspondence in $\CH^n(Y\times Y)$\,:
	\begin{equation}\label{E:tau}
\tau := \left\{
\begin{array}{ll}
\pi^n - \frac{1} {\deg(Y)}  h_1^{n/2}\cdot h_2^{n/2} \quad \text{if} \ n \ \text{even}\\
\pi^n \ \ \  \qquad \qquad \qquad\qquad \text{if} \ n \ \text{odd} 
\end{array}
\right.
	\end{equation}
	where $\pi^n$ is the Chow--K\"unneth projector  defined in~\eqref{eq:CKabsolute},	
	and we set $\tau_{i,j} := p_{i,j}^*\tau$, where $p_{i,j} : Y^m \to Y\times Y$ is the
	projection on the product of the $i$-th and $j$-th factors. Note that $\tau$ is
	an idempotent correspondence that commutes with $\pi^n$, and that cohomologically it is nothing but the
	orthogonal projector on the primitive cohomology $\HH^n(Y,\QQ)_{\prim}$ of $Y$.
	We write $$\h^n(Y)_{\mathrm{prim}} := \tau \h(Y) := (Y,\tau)$$ for the direct summand of $\h(Y)$ cut out by $\tau$ and call it the \emph{primitive summand} of $\h(Y)$.\medskip

	We note that Proposition~\ref{proppowers2} is trivial in the case $b_{\mathrm{pr}}(Y) =0$. Indeed in that case $Y$ is either a hyperplane (hence isomorphic to projective space) or an odd-dimensional quadric. In both cases, the Chow motive of $Y$ is known to be of Lefschetz type, so
	 that it is finite-dimensional and any Chow--K\"unneth decomposition is multiplicative (see~\cite[Theorem~2]{SV2}). From now on, we will therefore assume that $b_{\mathrm{pr}}(Y) \neq 0$.
	In order to prove Proposition~\ref{proppowers2}, we first determine as in 
	\cite[Lemma~2.3]{YinK3} the cohomological relations among the cycles introduced
	above. 
	
	\begin{lem}\label{L:Yin'}  
		The $\QQ$-subalgebra $\overline{\mathrm{R}}\,^*(Y^m)$ of the cohomology algebra ${\mathrm{H}}^*(Y^m,\QQ)$ generated by $o_i, h_i, \tau_{j,k}$, $1\leq i \leq m$, $1\leq j \neq k \leq m$, is isomorphic to the free graded $\QQ$-algebra generated by $o_i, h_i, \tau_{j,k}$, modulo the following relations\,:
			 	\begin{equation}\label{E:X'}
			h_i^n =\deg(Y)\,o_i, \quad h_i \cdot o_i = 0\,;
			\end{equation}
			\begin{equation}\label{E:X2'}
		\tau_{i,j} = 	\tau_{j,i}\,,\quad
				 \tau_{i,j} \cdot h_i = 0, \quad \tau_{i,j} \cdot \tau_{i,j} = (-1)^nb_{\mathrm{pr}}\, o_i\cdot o_j
			\,;
			\end{equation}
			\begin{equation}\label{E:X3'}
			\tau_{i,j} \cdot \tau_{i,k} = \tau_{j,k} \cdot o_i  \quad \mbox{for $\{i,j\} \neq \{i,k\}$}\,;
			\end{equation}
			\begin{equation}\label{E:X4'}
			\left\{
			\begin{array}{ll}
		\sum\limits_{\sigma \in \mathfrak{S}_{b_{\mathrm{pr}}+1}} \mathrm{sgn}(\sigma) \prod\limits_{i=1}^{b_{\mathrm{pr}}+1} \tau_{i,b_{\mathrm{pr}}+1 +\sigma(i)} = 0 \mbox{ and its permutations under $\mathfrak{S}_{2b_{\mathrm{pr}}+2}$}& \mbox{if $n$ is even}\,;\\
		
		\sum\limits_{\sigma \in \mathfrak{S}_{b_{\mathrm{pr}}+2}}  \prod\limits_{i=1}^{\frac{1}{2}b_{\mathrm{pr}}+1} \tau_{\sigma(2i-1),\sigma(2i)} = 0 & \mbox{if $n$ is odd}\,;
			\end{array}
			\right.
			\end{equation}
			where $b_{\mathrm{pr}}:=b_{\mathrm{pr}}(Y) := b^n_{\mathrm{prim}}(Y)$ is the rank of the primitive part of $\HH^n(Y, \QQ)$ (which is even if $n$ is odd).
	\end{lem}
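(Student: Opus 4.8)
The plan is to compute $\overline{\mathrm{R}}^*(Y^m)$ directly inside $\mathrm{H}^*(Y^m,\QQ)\cong\mathrm{H}^*(Y,\QQ)^{\otimes m}$, following the strategy of Yin's \cite[Lemma~2.3]{YinK3} in the K3 case. Recall first the structure of $\mathrm{H}^*(Y,\QQ)$: it is the orthogonal direct sum of the ambient subring $\QQ[h]/(h^{n+1})$, where $h^n=\deg(Y)\,o$ and $o$ is the class of a point, and the primitive part $V:=\mathrm{H}^n(Y,\QQ)_{\mathrm{prim}}$, of dimension $b_{\mathrm{pr}}$, carrying the $(-1)^n$-symmetric non-degenerate intersection form $\langle\,,\,\rangle$; moreover $V$ is precisely the kernel of multiplication by $h$ in middle degree. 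Choosing a basis $(e_a)$ of $V$ with dual basis $(e^a)$, the class of the correspondence $\tau$ is $\sum_a e_a\otimes e^a$ under Künneth. A short computation in the graded tensor algebra then verifies all of \eqref{E:X'}, \eqref{E:X2'}, \eqref{E:X3'}: relation \eqref{E:X'} is $h^{n+1}=0$ together with $o=h^n/\deg(Y)$; the identities $\tau_{i,j}=\tau_{j,i}$ and $\tau_{i,j}\cdot h_i=0$ follow, respectively, from the $(-1)^n$-symmetry of $\langle\,,\,\rangle$ and from $V$ being the kernel of multiplication by $h$; and $\tau_{i,j}^2=(-1)^n b_{\mathrm{pr}}\,o_io_j$, $\tau_{i,j}\tau_{i,k}=\tau_{j,k}\,o_i$ encode, respectively, $\operatorname{tr}(\mathrm{id}_V)=b_{\mathrm{pr}}$ and the fact that $\tau$ is an idempotent symmetric self-correspondence of $Y$. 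Finally \eqref{E:X4'} expresses the vanishing of $\Lambda^{b_{\mathrm{pr}}+1}V$ when $n$ is even, and of the $(b_{\mathrm{pr}}/2+1)$-st exterior power of the bivector $\sum_a e_a\wedge e^a\in\Lambda^2 V$ when $n$ is odd; substituting $\tau=\sum_a e_a\otimes e^a$ and expanding the relevant (anti)symmetrizer turns these vanishings into the displayed combinatorial identities. This yields a surjection from the free graded $\QQ$-algebra on the $o_i$, $h_i$, $\tau_{j,k}$ modulo \eqref{E:X'}--\eqref{E:X4'} onto $\overline{\mathrm{R}}^*(Y^m)$.

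For injectivity one exhibits a spanning set of the quotient ring in standard form and shows that these standard monomials have linearly independent images in $\mathrm{H}^*(Y^m,\QQ)$. Using \eqref{E:X'} one bounds the power of each $h_i$; using \eqref{E:X3'} together with $\tau_{i,j}^2=(-1)^n b_{\mathrm{pr}}o_io_j$ one rewrites any product of $\tau$'s as a product of $o$-classes times a product of $\tau_{i,j}$'s whose index pairs are pairwise disjoint, i.e.\ range over a partial matching $M$ of $\{1,\dots,m\}$; and from $\tau_{i,j}h_i=0=\tau_{i,j}o_i$ one sees that a factor occurring in $M$ carries nothing else. Hence the quotient ring is spanned by the monomials $\prod_{\{i,j\}\in M}\tau_{i,j}\cdot\prod_{k\notin M}\xi_k$ with $\xi_k\in\{1,h_k,\dots,h_k^{n-1},o_k\}$, further cut down by \eqref{E:X4'}, which eliminates the matchings $M$ that are too large relative to $b_{\mathrm{pr}}$. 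Under the Künneth identification $\mathrm{H}^*(Y^m,\QQ)\cong\bigotimes_{k=1}^m\big(\QQ[h]/(h^{n+1})\oplus V\big)$, the linear independence of these classes reduces to the assertion that, for each subset $S\subseteq\{1,\dots,m\}$ and each matching $M$ of $S$ surviving \eqref{E:X4'}, the tensors $\prod_{\{i,j\}\in M}[\tau_{i,j}]$ in $V^{\otimes|S|}$ are linearly independent. This is exactly the conjunction of the first and second fundamental theorems of invariant theory, for the orthogonal group $\mathrm{O}(V)$ when $n$ is even and for the symplectic group $\mathrm{Sp}(V)$ when $n$ is odd: the invariant tensors in $V^{\otimes|S|}$ are spanned by the perfect-matching contractions, and all relations among them are generated by the single extra (anti)symmetrizer relation, namely \eqref{E:X4'}. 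A dimension count then upgrades the surjection above to an isomorphism.

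The main obstacle is this last linear-independence step, and in particular checking that the relation appearing in the second fundamental theorem matches \eqref{E:X4'} exactly, signs included, in both parities of $n$. For $n$ even everything runs parallel to \cite[Lemma~2.3]{YinK3} (with $b_{\mathrm{pr}}$ playing the role there of the rank of the transcendental lattice of a K3 surface), and the genuinely new point is the $n$ odd case, where $V$ carries a symplectic form and one must invoke the $\mathrm{Sp}(V)$-analogue of the orthogonal fundamental theorems; keeping track of the Koszul signs in the graded tensor product throughout is the one place where real care is needed.
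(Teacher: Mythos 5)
Your proposal is correct and follows essentially the same route as the paper: one first verifies the relations \eqref{E:X'}--\eqref{E:X4'} in cohomology (the paper does this via \eqref{eqn:RelationX2}, the Euler characteristic, the grading, and the vanishing of $\Lambda^{b_{\mathrm{pr}}+1}\HH^n_{\mathrm{prim}}$ resp.\ $\operatorname{Sym}^{b_{\mathrm{pr}}+2}$ in the super sense, exactly as you do via the K\"unneth expression for $\tau$), and then reduces completeness of the relations to the statement that all linear relations among the matching monomials in the $\tau_{i,j}$ are generated by \eqref{E:X4'}. Your appeal to the first and second fundamental theorems for $\mathrm{O}(V)$ resp.\ $\mathrm{Sp}(V)$ is the same invariant-theoretic input the paper invokes through \cite[Theorem~3.1]{HW}, \cite[\S\S 3.5--3.7]{YinK3} and \cite[Proposition~3.7]{Tavakol2} (the paper phrases it as non-degeneracy of the intersection pairing on the formal algebra, \`a la Yin, rather than as linear independence of standard monomials, but this is only a presentational difference).
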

	\begin{proof}
		The proof is directly inspired by, and closely follows, \cite[\S 3.1]{Tavakol2} and \cite[\S 3]{YinK3}. 
		
	First, we check that the above relations hold in $\HH^*(Y^m,\QQ)$.
		The relations \eqref{E:X'} take place in~$Y$ and are clear. The relations \eqref{E:X2'} take place in $Y^2$\,: the relation $	\tau_{i,j} = \tau_{j,i}$ is clear, the
		relation $ \tau_{i,j}
		\cdot h_i = 0$ follows directly from \eqref{eqn:RelationX2}, 
		while the relation
		$\tau_{i,j} \cdot \tau_{i,j} = (-1)^nb_{\mathrm{pr}}\, o_i\cdot o_j $ follows 
		from the additional general fact that $\deg(\Delta_Y\cdot \Delta_Y) = \chi (Y)$, the topological Euler
		characteristic of $Y$ which in our case is $\chi(Y) = n+1 + (-1)^nb_{\mathrm{pr}}$.
		The relation~\eqref{E:X3'} takes place in $Y^3$ and follows from the
		relations~\eqref{E:X'} and~\eqref{E:X2'} together with the fact that the cohomology algebra $\HH^*(Y^m,\QQ)$ is graded (see also Remark~\ref{R:MCK}). Finally, if $n$ is even, the relation \eqref{E:X4'} takes place in $Y^{2(b_{\mathrm{pr}}+1)}$ and expresses the fact that the $(b_{\mathrm{pr}}+1)$-th exterior power of $\HH^n(Y, \QQ)_{\mathrm{prim}}$ vanishes\,; while if $n$ is odd, 
		the relation \eqref{E:X4'} takes place in $Y^{b_{\mathrm{pr}}+2}$ and follows from the vanishing of the $(b_{\mathrm{pr}}+2)$-th symmetric power of $\HH^n(Y,\QQ)_{\mathrm{prim}}$ viewed as a super-vector space.
		
		Second, we check that these relations generate all relations among $o_i, h_i, \tau_{j,k}$, $1\leq i \leq m$, $1\leq j < k \leq m$. To that end, let $\mathcal{R}^\ast(Y^m)$ denote
		the formally defined graded $\QQ$-algebra generated by symbols $\tau_{i,j}, o_i, h_i$ (placed in appropriate degree) modulo relations \eqref{E:X'}, \eqref{E:X2'}, \eqref{E:X3'}, \eqref{E:X4'}.		
	We claim that the pairing
		 $${\mathcal{R}}\,^e(Y^m) \times {\mathcal{R}}\,^{mn-e}(Y^m) \to {\mathcal{R}}\,^{mn}(Y^m) \simeq \QQ\{\,o_1\cdot o_2 \cdots o_m\}$$ 
		 (which is formally defined by relations \eqref{E:X'}, \eqref{E:X2'}, \eqref{E:X3'}, \eqref{E:X4'}) is non-degenerate for all $0\leq e \leq mn$. Arguing as in~\cite[\S 3.1]{YinK3}
		 this claim implies that the natural homomorphism from $\mathcal{R}^\ast(Y^m)$ to $\overline{\mathrm{R}}\,^*(Y^m)$ is an isomorphism, which establishes the lemma.		 	 
To prove the claim, we observe as in \cite[\S 3.2]{YinK3} (or as in \cite[p.~2047]{Tavakol2}) that the relations \eqref{E:X'}, \eqref{E:X2'} and \eqref{E:X3'} imply that ${\mathcal{R}}^*(Y^m)$ is linearly spanned by monomials in $\{o_i\}, \{h_i^k, 0\leq k <n\}$ and $\{\tau_{i,j}\}$ with no repeated index (\emph{i.e.}~each index $i \in \{1,\ldots, m\}$ appears at most once).  Denote then (as in \cite[\S 3.4]{YinK3}) by $\Q\mathrm{Mon}_\tau^{nk}(2k) $ the free $\Q$-vector space spanned by the (formal) monomials (of degree $nk$) of the form $\tau_{i_1,i_2}\cdots \tau_{i_{2k-1},i_{2k}}$, where $\{1,\ldots,2k\} = \{i_1,i_2\}\sqcup \cdots \sqcup \{i_{2k-1},i_{2k}\}$ is a partition of $\{1,\ldots,2k\}$ into subsets of order 2.
Using a direct analogue
 of \cite[Lemma~3.3]{YinK3}, the claim reduces to showing that the kernel of the pairing  
$$\Q\mathrm{Mon}_\tau^{nk}(2k) \times \Q\mathrm{Mon}_\tau^{nk}(2k)  \longrightarrow \Q\{o_1\cdots o_{2k}\}$$
formally defined by the relations \eqref{E:X'}, \eqref{E:X2'} and \eqref{E:X3'}
 is generated by the relation~\eqref{E:X4'}. This is derived from \cite[Theorem~3.1]{HW}, and is the content of \cite[\S \S 3.5-3.7]{YinK3} in the case $n$ even and of \cite[Proposition~3.7]{Tavakol2} in the case $n$ odd. The lemma is proven.
	\end{proof}

	\begin{proof}[Proof of Proposition~\ref{proppowers2}] In view of
		Lemma~\ref{L:Yin'}, the Proposition will follow if we can establish the relations \eqref{E:X'}, \eqref{E:X2'}, \eqref{E:X3'} and \eqref{E:X4'} in $\mathrm{R}^*(Y^m)$. 
		For that purpose, let us denote as before $o= h^n/\deg(Y) \in \CH^n(Y)$, and $\tau$ as in \eqref{E:tau}.
		 The relations \eqref{E:X'} hold true  in $\mathrm{R}^*(Y)$ for any smooth hypersurface $Y$. The  relations \eqref{E:X2'} hold true in $\mathrm{R}^*(Y^2)$ for any smooth hypersurface~$Y$\,; this is a combination of \eqref{eqn:RelationX2} and of the identity $\Delta_Y\cdot \Delta_Y = \Delta_Y\cdot p_1^*c_n(Y) = \frac{\chi(Y)}{\deg(Y)} \, \Delta_Y\cdot h_1^n$.
		 The relation \eqref{E:X3'}  takes place in $\mathrm{R}^*(Y^3)$ and, given the
		 relations~\eqref{E:X'} and~\eqref{E:X2'} modulo rational equivalence, it holds if and only~if  the MCK relation~\eqref{E:mck} of
		 Proposition~\ref{prop:MCKcubic2} holds.
	In case $n$ even, the relation~\eqref{E:X4'}, which takes place in $\mathrm{R}^*(Y^{2(b_{\mathrm{pr}}+1)})$, holds if and only~if the motive of $Y$ is Kimura--O'Sullivan finite-dimensional.
	Finally, in case $n$ odd, if the motive  of $Y$ is Kimura--O'Sullivan finite-dimensional, then as a direct summand of $\h(Y)$ the motive $\h^n(Y)_{\mathrm{prim}}$ is also Kimura--O'Sullivan finite-dimensional and it follows that the $(b_{\mathrm{pr}}+1)$-th symmetric power, and hence the  $(b_{\mathrm{pr}}+2)$-th symmetric power, of~$\h^n(Y)_{\mathrm{prim}}$ vanishes. Since the cycle $\sum_{\sigma \in \mathfrak{S}_{b_{\mathrm{pr}}+2}}  \prod_{i=1}^{\frac{1}{2}b_{\mathrm{pr}}+1} \tau_{\sigma(2i-1),\sigma(2i)}$ clearly defines a cycle on $\operatorname{Sym}^{b_{\mathrm{pr}}+2}\h^n(Y)_{\mathrm{prim}}$, it vanishes. Conversely, if  $\R^{n(b_{\mathrm{pr}}+1)}(Y^{2(b_{\mathrm{pr}}+1)})$ injects in cohomology \emph{via} the cycle class map, then the tautological cycle $\sum_{\sigma \in \mathfrak{S}_{b_{\mathrm{pr}}+1}}  \prod_{i=1}^{b_{\mathrm{pr}}+1} \tau_{i,b_{\mathrm{pr}}+1 +\sigma(i)}$, which defines an idempotent correspondence on $(\h^n(Y)_{\mathrm{prim}})^{\otimes (b_{\mathrm{pr}}+1)}$ with image $\operatorname{Sym}^{b_{\mathrm{pr}}+1}\h^n(Y)_{\mathrm{prim}}$, is rationally trivial since $\operatorname{Sym}^{b_{\mathrm{pr}}+1}\HH^n(Y,\QQ)_{\mathrm{prim}}=0$.
	\end{proof}

Finally, by combining Proposition~\ref{proppowers2}$(ii)$ with Theorem~\ref{thm:MCKcubic},  we have the following result in the case of cubic hypersurfaces\,:
\begin{cor}\label{cor:TautoCubic}
Let $Y$ be a smooth cubic hypersurface in $\PP^{n+1}$. Then the tautological ring
${\mathrm{R}}^\ast(Y^m):=\langle p_i^\ast h, \ 
		p_{k,l}^\ast
		\Delta_Y
		\rangle\ \subset\ \CH^\ast(Y^m)$
		injects into $\HH^{2*}(Y^m,\QQ)$ \emph{via} the cycle class
		map for all $m \leq 2b_{\mathrm{pr}}(Y)+1$ if $n$ is even and  for all $m \leq b_{\mathrm{pr}}(Y)+1$ if $n$ is odd.
		 Moreover, ${\mathrm{R}}^\ast(Y^m)$ injects into $\HH^{2*}(Y^m,\QQ)$ \emph{via} the cycle class
		map for all $m$ if and only if $Y$ is Kimura--O'Sullivan finite dimensional.
		\qed
\end{cor}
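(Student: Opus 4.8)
The plan is to obtain the statement as a direct combination of Theorem~\ref{thm:MCKcubic} with Proposition~\ref{proppowers2}; there is no genuinely new content to establish here, only a couple of conventions to reconcile.

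First I would observe that a smooth cubic hypersurface $Y\subset\PP^{n+1}$ has degree $3\le n+2$ for every $n\ge 1$, hence is Fano when $n\ge 2$ (and Calabi--Yau when $n=1$), so that the hypotheses of Proposition~\ref{proppowers2} are met. I would also note that the ring $\langle p_i^\ast h,\ p_{k,l}^\ast\Delta_Y\rangle$ appearing in the statement is exactly the ring $\mathrm{R}^\ast(Y^m)$ of Question~\ref{conjpowers2} that Proposition~\ref{proppowers2} is concerned with, and that, as recorded in Definition~\ref{def:tauto}, for a hypersurface it agrees with the full tautological ring, since the Chern classes of $T_Y$ and the restrictions of cycles from $\PP^{n+1}$ are all polynomials in $h$.

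Next I would invoke Theorem~\ref{thm:MCKcubic}: the canonical Chow--K\"unneth decomposition~\eqref{eq:CKabsolute} of a smooth cubic hypersurface is multiplicative. Plugging this into Proposition~\ref{proppowers2}$(ii)$ immediately gives that $\mathrm{R}^\ast(Y^m)$ injects into $\HH^{2*}(Y^m,\QQ)$ \emph{via} the cycle class map for all $m\le 2b_{\mathrm{pr}}(Y)+1$ when $n$ is even and for all $m\le b_{\mathrm{pr}}(Y)+1$ when $n$ is odd, which is the first assertion. For the ``moreover'' part, Proposition~\ref{proppowers2}$(iii)$ states that $\mathrm{R}^\ast(Y^m)$ injects into cohomology for all $m$ if and only if the decomposition~\eqref{eq:CKabsolute} is multiplicative \emph{and} $\h(Y)$ is Kimura--O'Sullivan finite-dimensional; since multiplicativity holds unconditionally for cubics by Theorem~\ref{thm:MCKcubic}, this reduces to the single condition that $\h(Y)$ be Kimura--O'Sullivan finite-dimensional, as claimed.

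Since the substantive work is carried out in Theorem~\ref{thm:MCKcubic} and Proposition~\ref{proppowers2}, there is no serious obstacle; the only minor points are the bookkeeping just described and the harmless remark that $b_{\mathrm{pr}}(Y)\ne 0$ for every smooth cubic hypersurface (a cubic being neither a projective space nor an odd-dimensional quadric), so that the bounds $2b_{\mathrm{pr}}(Y)+1$ and $b_{\mathrm{pr}}(Y)+1$ are always at least $3$ and the degenerate case $b_{\mathrm{pr}}=0$ of Proposition~\ref{proppowers2} never arises here.
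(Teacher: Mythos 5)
Your proposal is correct and matches the paper's own argument: the corollary is stated with an immediate proof obtained by combining Theorem~\ref{thm:MCKcubic} (multiplicativity of the Chow--K\"unneth decomposition for cubics) with Proposition~\ref{proppowers2}$(ii)$ and $(iii)$, exactly as you do. Your additional bookkeeping remarks (degree $3\le n+2$, identification of the tautological ring, $b_{\mathrm{pr}}(Y)\neq 0$) are harmless and consistent with the paper.
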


\begin{rmk}\label{rmk:Cubic3and5folds}
	 We note that all smooth projective curves, and in particular cubic curves, are Kimura--O'Sullivan finite-dimensional, and that cubic surfaces have trivial Chow groups (Definition~\ref{def:trivialChow}) and hence are Kimura--O'Sullivan finite-dimensional by Lemma~\ref{trivial} and by the fact that Lefschetz motives are finite-dimensional.
	 In addition, cubic threefolds and fivefolds are known to be Kimura--O'Sullivan finite dimensional\,; see \cite{MR2877438} and \cite[Example~4.12]{MR3141813}, respectively. Hence, for $n=1, 2, 3,5$, Corollary \ref{cor:TautoCubic} gives the injectivity
${\mathrm{R}}^\ast(Y^m)\hookrightarrow \HH^{2*}(Y^m,\QQ)$ for all~$m$. We note further that in case $n=1$, after fixing a closed point $O$ in $Y$, the pair $(Y,O)$ defines an elliptic curve; in this case the injectivity
${\mathrm{R}}^\ast(Y^m)\hookrightarrow \HH^{2*}(Y^m,\QQ)$ for all~$m$ is due to Tavakol~\cite{Tavakol3}, and can also be deduced directly from O'Sullivan's theory of symmetrically distinguished cycles~\cite{OSul}.
\end{rmk}

	\subsection{On the extended tautological ring of smooth  cubic fourfolds} This paragraph is not needed in the rest of the paper\,; its aim is to show how the arguments of \S \ref{SS:cohotauto} can be refined to establish analogues of \cite[Theorem]{YinK3} concerning K3 surfaces in the case of cubic fourfolds.
	
	For a K3 surface $S$, let $\tilde{\mathrm{R}}^\ast(S^m)\subset \CH^\ast(S^m)$ be
	the
	$\QQ$-subalgebra generated by $\CH^1(S)$ and the diagonal $\Delta_S\in
	\CH^2(S\times
	S)$. Voisin conjectures \cite[Conjecture 1.6]{V17} that
	$\tilde{\mathrm{R}}^\ast(S^m)$ injects
	into cohomology (by~\cite{YinK3} this is equivalent to Kimura--O'Sullivan finite-dimensionality
	of $S$). Here is a version of Voisin's conjecture for cubic fourfolds that refines Question~\ref{conjpowers2}\,:
	
	\begin{conj}\label{conjpowers} Let $Y$ be a smooth cubic fourfold, and
		$m\in\N$. Let $\tilde{\mathrm{R}}^\ast(Y^m)$ be the $\QQ$-subalgebra
		\[ \tilde{\mathrm{R}}^\ast(Y^m):=\langle p_i^\ast h, \ p_j^\ast \CH^2(Y), \
		p_{k,l}^\ast
		\Delta_Y
		\rangle\ \ \ \subset\ \CH^\ast(Y^m),\ \]
		where $p_i$, $p_j$ and $p_{k,l}$ denote the various projections from $Y^m$ to
		$Y$
		and to $Y^2$.
		Then $\tilde{\mathrm{R}}^*(Y^m)$ injects into $\HH^{2*}(Y^m,\QQ)$ \emph{via} the cycle class
		map.
	\end{conj}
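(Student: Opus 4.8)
The plan is to reduce Conjecture~\ref{conjpowers} to a single missing ingredient --- the Kimura--O'Sullivan finite-dimensionality of $Y$ --- and then to attack that ingredient. The reduction follows the pattern of \S\ref{SS:cohotauto} (it is, in substance, the proof of the refined statement recorded as Proposition~\ref{proppowers}). First I would cut down the generators: using the multiplicative Chow--K\"unneth decomposition (Theorem~\ref{thm:MCKcubic}) and the vanishing of $\CH^6(Y)$, one gets $\CH^2(Y)=\QQ h^2\oplus\CH^2(Y)_{\mathrm{prim}}$ with $\CH^2(Y)_{\mathrm{prim}}:=\tau_*\CH^2(Y)$ carried by $\h^4(Y)_{\mathrm{prim}}(1)$, where $\tau$ is as in~\eqref{E:tau}; fixing a basis $\{\gamma_\alpha\}$ of $\CH^2(Y)_{\mathrm{prim}}$ and writing $\gamma_\alpha^{(i)}:=p_i^*\gamma_\alpha$, one has $\tilde{\mathrm R}^*(Y^m)=\langle h_i,\gamma_\alpha^{(i)},\tau_{j,k}\rangle$, with $o_i=\tfrac13 h_i^4$. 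Next I would extend Lemma~\ref{L:Yin'}: the cohomology ring generated by the $o_i,h_i,\tau_{j,k},\gamma_\alpha^{(i)}$ is free on these modulo~\eqref{E:X'}--\eqref{E:X4'} together with the extra relations $\gamma_\alpha^{(i)}\cdot h_i=0$, $\gamma_\alpha^{(i)}\cdot\gamma_\beta^{(i)}=\langle\gamma_\alpha,\gamma_\beta\rangle\,o_i$ (the intersection form on $\HH^4(Y)_{\mathrm{prim}}$) and $\gamma_\alpha^{(i)}\cdot\tau_{i,j}=\gamma_\alpha^{(j)}\cdot o_i$ (as $\tau$ restricts to the identity on the primitive part); that these generate all relations is the cubic-fourfold analogue of the full \cite[Theorem]{YinK3}, and I expect it to follow \emph{mutatis mutandis} via \cite[Theorem~3.1]{HW}, the $\gamma_\alpha$ only enlarging the Lefschetz-type block of $\h^4(Y)_{\mathrm{prim}}$. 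Finally, relations~\eqref{E:X'}--\eqref{E:X3'} lift to $\CH^*(Y^m)$ by Theorem~\ref{thm:MCKcubic} exactly as in Proposition~\ref{proppowers2}, and the three $\gamma$-relations lift by the same mechanism together with the (unconditional) $m=2$ case, in which no instance of~\eqref{E:X4'} intervenes. The net effect is Proposition~\ref{proppowers}: $\tilde{\mathrm R}^*(Y^m)$ injects into cohomology for all $m\leq 2b_{\mathrm{tr}}(Y)+1$ unconditionally, and for all $m$ --- i.e.\ Conjecture~\ref{conjpowers} holds --- if and only if $\h^4(Y)_{\mathrm{prim}}$, equivalently $\h(Y)$, equivalently the K3-surface-like motive $M$ of~\eqref{eqn:FanoCubicFourfold}, equivalently the Fano variety of lines $F$, is Kimura--O'Sullivan finite-dimensional.

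It thus remains to prove that smooth cubic fourfolds are Kimura--O'Sullivan finite-dimensional; since the algebraic part of $\h^4(Y)_{\mathrm{prim}}$ is of Lefschetz type and hence automatically finite-dimensional, this amounts to proving that the transcendental motive $\h^4(Y)_{\mathrm{tr}}$ --- a non-commutative K3-type motive --- is finite-dimensional. The plan here would be to exploit the Hassett-special loci: for $Y$ in a divisor $\mathcal C_d$ of special cubic fourfolds with an associated K3 surface $S$, the motive $\h^4(Y)_{\mathrm{tr}}$ is (a Tate twist of) the transcendental motive $t_2(S)$, so finite-dimensionality of $Y$ reduces to that of $S$; for suitable $d$ one can take $S$ to be, say, a Kummer surface or of sufficiently large Picard rank, and then $S$ is known to be finite-dimensional. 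One would then need a propagation step: a specialization or deformation argument transporting finite-dimensionality across the $20$-dimensional moduli space of cubic fourfolds, or, alternatively, a transfer through the derived-categorical picture, using that the hyper-K\"ahler moduli spaces attached to the Kuznetsov component $\mathcal A_Y$ (Theorem~\ref{thm:NCK3}) have motives cut out of $\h(Y^m)$, so that finite-dimensionality of any one of them would suffice.

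The hard part --- and the reason the statement is posed as a conjecture --- is precisely this propagation step. Kimura--O'Sullivan finite-dimensionality is not known to be a deformation-invariant property, nor to specialize within families; the motives of a \emph{general} cubic fourfold, and of its Fano variety of lines, are not known to be finite-dimensional; and for a cubic fourfold in a Hassett divisor the associated K3 surface may have Picard rank~$1$, for which finite-dimensionality is itself open. So, pending progress on the Kimura--O'Sullivan conjecture for cubic fourfolds (equivalently for the hyper-K\"ahler fourfolds $F$, equivalently for the non-commutative K3 motives $M$), Conjecture~\ref{conjpowers} remains, in the generality stated, out of reach with present techniques --- while being unconditionally true in the range $m\leq 2b_{\mathrm{tr}}(Y)+1$, and true in full for every cubic fourfold possessing a finite-dimensional associated K3 surface.
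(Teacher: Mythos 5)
The statement you are addressing is posed in the paper as a conjecture, and your assessment matches the paper's: what you actually establish is, in substance, the paper's own proof of Proposition~\ref{proppowers}, namely injectivity of $\tilde{\mathrm{R}}^*(Y^m)$ into cohomology for $m\leq 2b_{\mathrm{tr}}(Y)+1$ and the equivalence of the full conjecture with Kimura--O'Sullivan finite-dimensionality of $Y$, obtained via the MCK relation~\eqref{E:mck} (Theorem~\ref{thm:MCKcubic}) together with a Yin-type presentation of the cohomological tautological ring. The only difference is bookkeeping: you keep the primitive projector~\eqref{E:tau} and adjoin the primitive algebraic classes $\gamma_\alpha$ as extra generators, whereas the paper absorbs them into the transcendental projector $\tau$ and proves the corresponding Chow-theoretic relations in Lemma~\ref{L:Yin}; the two presentations are equivalent. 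Your final attempt to supply the missing ingredient (finite-dimensionality) via Hassett divisors and associated K3 surfaces is, as you yourself concede, blocked at the propagation/specialization step, so the conjecture in full generality remains open --- consistent with its status in the paper.
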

	
	In what follows, $b_{\mathrm{tr}}(Y)$ denotes the dimension of the transcendental cohomology of the smooth cubic fourfold $Y$, \emph{i.e.}, the dimension of the orthogonal complement in $\HH^*(Y, \QQ)$ of the subspace consisting of Hodge classes.
	Using the multiplicative Chow--K\"unneth relation \eqref{E:mck} for cubic hypersurfaces,  we can adapt the method of Yin concerning K3 surfaces~\cite{YinK3} and prove the following result.
	
	\begin{prop}\label{proppowers} Let $Y$ be a smooth cubic fourfold. Then
		Conjecture \ref{conjpowers} is true  for $Y$ for all $m \leq 2b_{\mathrm{tr}}(Y)+1$, in particular for all $m\leq 5$.
Moreover, Conjecture~\ref{conjpowers} is true for $Y$ for all $m$ if and only~if $Y$ is Kimura--O'Sullivan finite-dimensional.
	\end{prop}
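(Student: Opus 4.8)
The plan is to adapt the proof of Proposition~\ref{proppowers2}, the new feature being that the extra generators $p_j^\ast\CH^2(Y)$ must be incorporated; the model to follow is Yin's treatment of K3 surfaces in \cite{YinK3}, where $\CH^1(S)$ plays the r\^ole that $\CH^2(Y)$ plays here. First I would record the relevant structure of a smooth cubic fourfold $Y$. Since $Y$ is rationally connected, $\CH^4(Y)_\QQ=\QQ\cdot o$ with $o=\tfrac13 h^4$ (and, likewise, $\CH_0(Y^k)_\QQ=\QQ$ for all $k$), and one knows that $\CH^2(Y)_{\mathrm{hom},\QQ}=0$ --- this vanishing is in any case \emph{forced} by the statement, as $p_1^\ast$ is split injective on Chow groups --- so that $\CH^2(Y)$ is a finite-dimensional $\QQ$-vector space mapping isomorphically onto the space $N^2\HH^4(Y,\QQ)$ of algebraic classes, of dimension $23-b_{\mathrm{tr}}(Y)$. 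Fix lifts $\lambda_1,\dots,\lambda_\rho\in\CH^2(Y)$, $\rho:=22-b_{\mathrm{tr}}(Y)$, of an orthogonal basis of $N^2\HH^4(Y,\QQ)_{\prim}$, with nonzero self-intersection numbers $c_a:=\deg(\lambda_a^2)$. Using the multiplicativity of the Chow--K\"unneth decomposition~\eqref{eq:CKabsolute} (Theorem~\ref{thm:MCKcubic}) and $\CH^4(Y)_\QQ=\QQ o$, a direct check shows that $$\pi^4_{\mathrm{alg}}:=\tfrac13\,h^2\times h^2+\sum_{a=1}^\rho\tfrac1{c_a}\,\lambda_a\times\lambda_a\ \in\ \CH^4(Y\times Y)$$ is an idempotent correspondence with $\pi^4_{\mathrm{alg}}\circ\pi^4=\pi^4\circ\pi^4_{\mathrm{alg}}=\pi^4_{\mathrm{alg}}$, so that $\tau:=\pi^4-\pi^4_{\mathrm{alg}}$ (refining the primitive projector of~\eqref{E:tau}) is an idempotent and one gets a decomposition of Chow motives $$\h(Y)\ \simeq\ \1\oplus\1(-1)\oplus\1(-2)^{\oplus(\rho+1)}\oplus t(Y)\oplus\1(-3)\oplus\1(-4),\qquad t(Y):=(Y,\tau),$$ in which, cohomologically, $\tau$ is the orthogonal projector onto $\HH^4(Y,\QQ)_{\mathrm{tr}}$ of rank $b_{\mathrm{tr}}(Y)$, and $\CH^2(t(Y))=\tau_\ast\CH^2(Y)=0$ by the vanishing above. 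Thus $t(Y)$ behaves exactly like the transcendental motive of a weight-$2$ K3 surface, twisted once.

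Next, exactly as in \S\ref{SS:cohotauto}, set $o_i=p_i^\ast o$, $h_i=p_i^\ast h$, $(\lambda_a)_i=p_i^\ast\lambda_a$, $\tau_{i,j}=p_{i,j}^\ast\tau$, and observe that $\tilde{\mathrm{R}}^\ast(Y^m)$ is generated as a $\QQ$-algebra by these classes: indeed $\CH^2(Y)$ is spanned by $h^2$ and the $\lambda_a$, while $\Delta_Y=\pi^0+\pi^2+\tfrac13h^2\times h^2+\sum_a\tfrac1{c_a}\lambda_a\times\lambda_a+\tau+\pi^6+\pi^8$, all of whose summands other than $\tau$ are polynomials in the $h_i$ and $(\lambda_a)_i$. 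I would then prove the analogue of Lemma~\ref{L:Yin'}: the subalgebra $\overline{\tilde{\mathrm{R}}}^\ast(Y^m)\subset\HH^\ast(Y^m,\QQ)$ generated by the $o_i,h_i,(\lambda_a)_i,\tau_{i,j}$ is the free graded-commutative $\QQ$-algebra on these generators modulo the relations in one factor ($h_i^4=3o_i$, $h_io_i=0$, $(\lambda_a)_ih_i=0$, $(\lambda_a)_i(\lambda_b)_i=c_a\delta_{ab}o_i$, $(\lambda_a)_io_i=0$), the relations in two factors ($\tau_{i,j}=\tau_{j,i}$, $\tau_{i,j}h_i=0$, $\tau_{i,j}(\lambda_a)_i=0$, $\tau_{i,j}^2=b_{\mathrm{tr}}(Y)\,o_io_j$ --- with a plus sign since $n=4$ is even), the relation in three factors ($\tau_{i,j}\tau_{i,k}=\tau_{j,k}o_i$ for $\{i,j\}\neq\{i,k\}$), and the \emph{finite-dimensionality relation}, an antisymmetrised product of $b_{\mathrm{tr}}(Y)+1$ of the $\tau_{i,j}$ living on $Y^{2(b_{\mathrm{tr}}(Y)+1)}$ and expressing $\wedge^{\,b_{\mathrm{tr}}(Y)+1}\HH^4(Y,\QQ)_{\mathrm{tr}}=0$. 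That these generate \emph{all} relations follows verbatim as in Lemma~\ref{L:Yin'}: one reduces to the monomials in the $\tau_{i,j}$ with no repeated index and applies \cite[Theorem~3.1]{HW} exactly as in the even-weight case of \cite{YinK3}; the Lefschetz-type generators $(\lambda_a)_i$ contribute only the displayed bilinear relations and do not interact with $\tau$.

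Finally I would lift these relations to $\CH^\ast(Y^m)$. The one- and two-factor relations hold rationally for the same reasons as in the proof of Proposition~\ref{proppowers2} --- via~\eqref{eqn:RelationX2}, via $\Delta_Y\cdot\Delta_Y=\tfrac{\chi(Y)}{3}\Delta_Y\cdot h_1^4$, and via $\CH_0(Y^k)_\QQ=\QQ$ --- once one also knows $\lambda_a\cdot h=0$ in $\CH^3(Y)$ and $\tau_{i,j}(\lambda_a)_i=0$ in $\CH^\ast(Y^2)$; both follow from the multiplicativity of~\eqref{eq:CKabsolute}, which forces the relevant products (e.g.\ $h\cdot\lambda_a$) into Lefschetz summands where the cycle class map is injective, together with $\CH^2(Y)_{\mathrm{hom},\QQ}=0$, precisely as the analogous relations are handled in \cite{YinK3}. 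The three-factor relation $\tau_{i,j}\tau_{i,k}=\tau_{j,k}o_i$ holds rationally because, given the preceding relations, it is equivalent to the MCK relation~\eqref{E:mck}, which is valid by Theorem~\ref{thm:MCKcubic}. Consequently, for $m\leq 2b_{\mathrm{tr}}(Y)+1$ --- the range in which the finite-dimensionality relation involves no cycle on $Y^m$ at all, since it genuinely requires $2(b_{\mathrm{tr}}(Y)+1)=2b_{\mathrm{tr}}(Y)+2$ factors --- all the relations that occur on $Y^m$ hold rationally, so the surjection $\tilde{\mathrm{R}}^\ast(Y^m)\twoheadrightarrow\overline{\tilde{\mathrm{R}}}^\ast(Y^m)$ is an isomorphism, i.e.\ $\tilde{\mathrm{R}}^\ast(Y^m)$ injects into $\HH^{2\ast}(Y^m,\QQ)$; and since $b_{\mathrm{tr}}(Y)=23-\dim N^2\HH^4(Y,\QQ)\geq 23-21=2$, this covers all $m\leq 5$. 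For the equivalence over all $m$: if $Y$ is Kimura--O'Sullivan finite-dimensional~\cite{KimuraFD}, then $t(Y)$, carrying cohomology only in even degree, is evenly finite-dimensional, so $\wedge^{\,b_{\mathrm{tr}}(Y)+1}t(Y)=0$ as a Chow motive and the finite-dimensionality relation also holds rationally; conversely, if $\tilde{\mathrm{R}}^\ast(Y^m)$ injects for all $m$, the tautological, cohomologically trivial cycle on $Y^{2(b_{\mathrm{tr}}(Y)+1)}$ representing the antisymmetriser of $t(Y)^{\otimes(b_{\mathrm{tr}}(Y)+1)}$ must be rationally trivial, forcing $\wedge^{\,b_{\mathrm{tr}}(Y)+1}t(Y)=0$ and hence the finite-dimensionality of $t(Y)$, and so of $\h(Y)$ since all its other summands are of Lefschetz type.

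The main obstacle is the combinatorial heart of the argument, namely establishing the analogue of Lemma~\ref{L:Yin'} --- that the listed cohomological relations generate \emph{all} relations among the $o_i,h_i,(\lambda_a)_i,\tau_{i,j}$ --- but this is dealt with exactly as in \cite{YinK3} and \cite{HW} once the motivic input (the refined decomposition $\h^4(Y)=\1(-2)^{\oplus(\rho+1)}\oplus t(Y)$ with $\CH^2(t(Y))=0$) is in place. The genuinely new points --- constructing that refined Chow-motivic decomposition and bookkeeping the extra generators $(\lambda_a)_i$ --- are routine, resting only on $\CH_0(Y^k)_\QQ=\QQ$, on $\CH^2(Y)_{\mathrm{hom},\QQ}=0$, and on Theorem~\ref{thm:MCKcubic}.
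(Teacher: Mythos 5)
Your proposal is correct and follows essentially the same route as the paper: the paper likewise introduces the refined projector $\tau = \pi^4 - \tfrac13 h_1^2h_2^2 - \sum_s \tfrac{l^s_1\cdot l^s_2}{\deg(l^s\cdot l^s)}$, establishes exactly your one-, two- and three-factor relations in $\CH^*(Y^m)$ (Lemma~\ref{L:Yin}), with the crucial relations $\tau_{i,j}\cdot l_i^s=0$ and $\tau_{i,j}\cdot\tau_{i,k}=\tau_{j,k}\cdot o_i$ deduced from the MCK relation~\eqref{E:mck} of Theorem~\ref{thm:MCKcubic}, and then argues as in Proposition~\ref{proppowers2}/Yin that the only further cohomological relation is the finite-dimensionality relation on $Y^{2(b_{\mathrm{tr}}+1)}$, which yields the bound $m\le 2b_{\mathrm{tr}}(Y)+1$ and the equivalence with Kimura--O'Sullivan finite-dimensionality. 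Your explicit Chow--motivic decomposition with the summand $t(Y)$ is a harmless repackaging of the same projector, and your sketched justification of $\tau_{i,j}\cdot(\lambda_a)_i=0$ is precisely the computation the paper carries out via~\eqref{E:mck}.
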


	Before giving the proof of Proposition~\ref{proppowers}, let us introduce some notations. We fix a smooth cubic fourfold $Y$. First we note that the cycle class map
	$\CH^2(Y) \to \HH^4(Y, \QQ)$ is injective and an isomorphism on the Hodge classes in
	$\HH^4(Y, \QQ)$. Let $\{l^s\}_s$ be an orthogonal basis of the Hodge classes in
	$\HH^4(Y, \QQ)_{\mathrm{prim}}$, \emph{i.e.}, $\{h^2\} \cup \{l^s\}_s$ forms a basis of
	$\CH^2(Y)$ with the property that $l^s\cdot l^{s'} = 0$ whenever $s\neq s'$ and
	$l^s\cdot h = 0$ for all $s$. Note that the latter property $l^s\cdot h = 0$
	holds cohomologically (by definition of primitive cohomology) and lifts to
	rational equivalence since $l^s\cdot h$ must be a rational multiple of $h^3$ in
	$\CH^3(Y)$. Recall that all points on $Y$ are rationally equivalent and that $o$
	denotes the class of any point on $Y$. For ease of notation, we write $o_i,
	l_i^s$ and $h_i$ for $p_i^*o$, $p_i^*l^s$ and $p_i^*h$ respectively, where $p_i
	: Y^m \to Y$ is the projection on the $i$-th factor. Finally we define 
	$$\tau := \pi^4 - \frac{1}{3}h_1^2\cdot h_2^2 - \sum_s \frac{l^s_1\cdot
		l^s_2}{\deg(l^s\cdot l^s)} \quad \in \CH^4(Y\times Y),$$
	where $\pi^4$ is the  Chow--K\"unneth projector $\pi^4_{Y}$ defined in~\eqref{eq:CKabsolute} (in our case, $n=4$),	
	and we set $\tau_{i,j} = p_{i,j}^*\tau$, where $p_{i,j} : Y^m \to Y\times Y$ is the
	projection on the product of the $i$-th and $j$-th factors. Note that $\tau$ is
	an idempotent correspondence, and that cohomologically it is nothing but the
	orthogonal projector on the transcendental cohomology of $Y$, \emph{i.e.}, on the
	orthogonal complement of the space of Hodge classes in $\HH^*(Y, \QQ)$.\medskip

	In order to prove Proposition~\ref{proppowers}, we establish as in 
	\cite[Lemma~2.3]{YinK3} sufficiently many relations among the cycles introduced
	above. Central is 	the MCK relation~\eqref{E:mck} of Proposition~\ref{prop:MCKcubic2}.
	\begin{lem}\label{L:Yin}
		In $\widetilde{\mathrm{R}}^*(Y^m)$ we have relations
		\begin{equation}\label{E:X}
		h_i \cdot o_i = 0,  \quad
		  l_i^s\cdot h_i = 0, \quad
		h_i^4 = 3o_i, \quad l_i^s\cdot l_i^s  = \deg(l^s\cdot l^s)\,o_i\,;
		\end{equation}
		\begin{equation}\label{E:X2}
			\tau_{i,j} = 	\tau_{j,i}\,,\quad
				 \tau_{i,j} \cdot h_i = 0, \quad \tau_{i,j} \cdot
		l_i^s = 0, \quad \tau_{i,j} \cdot \tau_{i,j} = b_{{\mathrm{tr}}}\, o_i\cdot o_j
		\,;
		\end{equation}
		\begin{equation}\label{E:X3}
		\tau_{i,j} \cdot \tau_{i,k} = \tau_{j,k} \cdot o_i  \quad \mbox{for $\{i,j\} \neq \{i,k\}$,} 
		\end{equation}
		where $b_{{\mathrm{tr}}}:=b_{\mathrm{tr}}(Y)$ is the rank of the transcendental part of $\HH^4(Y, \QQ)$.
	\end{lem}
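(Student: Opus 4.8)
The plan is to follow closely the strategy of \cite[Lemma~2.3]{YinK3} for K3 surfaces, with the Beauville--Voisin relations and the canonical multiplicative Chow--K\"unneth decomposition of a K3 surface replaced by the elementary relations among tautological cycles on a cubic fourfold and by the multiplicative Chow--K\"unneth decomposition of Theorem~\ref{thm:MCKcubic}. I would check the three families of relations in turn, working on $Y$, on $Y\times Y$ and on $Y^3$ respectively; the case of general $m$ then follows by pulling back along projections.

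The relations~\eqref{E:X} live on a single copy of $Y$\,: $h_i\cdot o_i=0$ holds for dimension reasons, $h_i^4=3o_i$ is the definition of $o$, and $l_i^s\cdot h_i=0$ (resp.\ $l_i^s\cdot l_i^s=\deg(l^s\cdot l^s)\,o_i$) follows from the obvious cohomological identity because $l^s\cdot h$ is a rational multiple of $h^3$ in $\CH^3(Y)$ (resp.\ because $\CH_0(Y)=\QQ\,o$, a cubic fourfold being rationally connected), as recalled above. On $Y\times Y$, the symmetry of $\tau$ is immediate from its definition. For $\tau\cdot h_1=0$, substitute $\pi^4=\Delta_Y-\sum_{i\neq2}\pi^{2i}$ and use~\eqref{eqn:RelationX2}\,: a direct calculation yields $\pi^4\cdot h_1=\tfrac{1}{3}h^3\times h^2$, which cancels $\tfrac{1}{3}h_1^2h_2^2\cdot h_1$, the terms involving $l^s$ vanishing by~\eqref{E:X}. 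Finally $\tau\cdot\tau$ lies in $\CH_0(Y\times Y)=\QQ\,o_1o_2$ (again $Y\times Y$ is rationally connected), hence equals $\deg(\tau\cdot\tau)\,o_1o_2$\,; and $\deg(\tau\cdot\tau)=b_{\mathrm{tr}}(Y)$ because $\tau$ is a symmetric idempotent self-correspondence of trace $b_{\mathrm{tr}}(Y)$.

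The only relation in~\eqref{E:X2} requiring an actual argument is $\tau\cdot l_1^s=0$. I would use the identity $\gamma\cdot p_1^*\alpha=\gamma\circ M_\alpha$, valid for a self-correspondence $\gamma$ of $Y$ and a class $\alpha$, where $M_\alpha:=\Delta_Y\cdot p_1^*\alpha$ denotes multiplication by $\alpha$ (checked by the usual partial-diagonal computation), to rewrite $\tau\cdot l_1^s=\tau\circ M_{l^s}$. By the multiplicativity of the Chow--K\"unneth decomposition (Theorem~\ref{thm:MCKcubic}) one has $\pi^k\circ M_{l^s}\circ\pi^j=0$ unless $k=j+4$ (using that $(\pi^4)_*l^s=l^s$, which holds by a dimension count), whence $\tau\circ M_{l^s}=\tau\circ M_{l^s}\circ\pi^0$\,: this depends only on the restriction of $M_{l^s}$ to $\h^0(Y)$, namely the morphism $\h^0(Y)\to\h^4(Y)$ sending $1$ to $l^s$. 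Since $\CH^2(Y)\to\HH^4(Y,\QQ)$ is injective, $l^s$ --- a nonzero Hodge class --- lies in $\CH^2$ of the algebraic summand $\h^4(Y)_{\mathrm{alg}}=(Y,\pi^4_{\mathrm{alg}})$ of $\h^4(Y)$, so this morphism factors through $\h^4(Y)_{\mathrm{alg}}$, which $\tau=\pi^4-\pi^4_{\mathrm{alg}}$ annihilates\,; hence $\tau\circ M_{l^s}=0$. Concretely this is the identity $\Delta_Y\cdot l_1^s=l^s\times o+o\times l^s$ in $\CH^6(Y\times Y)$, lifting its cohomological counterpart.

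Finally, the relation~\eqref{E:X3} takes place on $Y^3$. As in the proof of Proposition~\ref{proppowers2}, expanding $\tau_{i,j}\tau_{i,k}$ and $\tau_{j,k}o_i$ by the definition of $\tau$ and reorganising with~\eqref{eqn:RelationX2} and the relations established above shows that, modulo~\eqref{E:X} and~\eqref{E:X2}, the relation~\eqref{E:X3} is equivalent to the MCK relation~\eqref{E:mck} for $Y$, which holds by Theorem~\ref{thm:MCKcubic} together with Proposition~\ref{prop:MCKcubic2}. I expect the main obstacle to be the relation $\tau\cdot l_1^s=0$\,: it is the one place where a statement about cohomology classes --- the orthogonality of the transcendental part of $\HH^4(Y)$ to the Hodge classes $l^s$ --- has to be lifted to rational equivalence, and this lift genuinely uses both the multiplicative Chow--K\"unneth decomposition and the injectivity of $\CH^2(Y)$ into $\HH^4(Y,\QQ)$.
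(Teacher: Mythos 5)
Your proposal is correct and takes essentially the same approach as the paper: the relations on $Y$ and the relations $\tau_{i,j}\cdot h_i=0$, $\tau_{i,j}\cdot\tau_{i,j}=b_{\mathrm{tr}}\,o_i\cdot o_j$ are handled by the same elementary arguments, and both $\tau_{i,j}\cdot l_i^s=0$ and \eqref{E:X3} are deduced from the MCK relation \eqref{E:mck}. Your projector-style derivation of $\tau\cdot l_1^s=0$ reduces, as you note yourself, to the identity $\Delta_Y\cdot p_1^*l^s=l^s\times o+o\times l^s$, which is precisely the identity the paper uses at that point.
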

	\begin{proof}
		The relations \eqref{E:X} take place in $Y$ and were established in the
		discussion above the lemma. The relations \eqref{E:X2} take place in $Y^2$.
		 The relation $\tau_{i,j} = 	\tau_{j,i}$ is trivial and the relation $ \tau_{i,j}
		\cdot h_i = 0$ follows directly from~\eqref{eqn:RelationX2}. The relation
		$\tau_{i,j} \cdot \tau_{i,j} = b_{{\mathrm{tr}}}\, o_i\cdot o_j $ follows 
		from the general fact that $\deg(\Delta_Y\cdot \Delta_Y) = \chi (Y)$, the topological Euler
		characteristic of $Y$, and the fact that $\CH^8(Y\times Y)$ is 1-dimensional.
		Concerning the relation $\tau_{i,j} \cdot l_i^s =0$, this is a consequence of  
		$\Delta_Y \cdot p_1^*l^s = (\delta_Y)_* l^s = p_1^*l^s\cdot p_2^*o + p_1^*o\cdot
		p_2^*l^s$, where $\delta_Y$ is the small diagonal of $Y^{3}$, seen as a correspondence from $Y$ to $Y\times Y$
		and  where we used the relation $h\cdot l^s = 0$ together with
		the MCK relation~\eqref{E:mck} of Proposition~\ref{prop:MCKcubic2}.
		Finally, the relation \eqref{E:X3} takes place in $Y^3$ and follows from the
		relations~\eqref{E:X} and~\eqref{E:X2} together with the MCK relation~\eqref{E:mck} of
		Proposition~\ref{prop:MCKcubic2}.
	\end{proof}

	\begin{proof}[Proof of Proposition~\ref{proppowers}] 
		Once we have the key
		Lemma~\ref{L:Yin}, the proof of the proposition 
is similar to that of Proposition~\ref{proppowers2}\,:
		 beyond the cohomological relations \eqref{E:X} and \eqref{E:X2}, the only other relations in the $\QQ$-algebra $\HH^*(Y^m, \QQ)$ are the ones given by \eqref{E:X3} (expressing that $\HH^*(Y, \QQ)$ is a graded $\Q$-algebra) and by the finite-dimensionality relation $\Lambda^{b_{\mathrm{tr}}+1}\HH^*(Y, \QQ)_{\mathrm{tr}} = 0$ (expressing that the transcendental cohomology $\HH^*(Y, \QQ)_{\mathrm{tr}}$ has finite dimension $b_{\mathrm{tr}}$).
	\end{proof}

	\subsection{The Franchetta property for powers of cubic hypersurfaces\,: Proof of Theorem~\ref{thm2:Fcubic}} \label{SS:cubic}
	
	Let $Y$ be a smooth cubic hypersurface.
	We start by using Proposition~\ref{strat} to determine generators of
	$\operatorname{GDCH}_B^*(Y^m)$ for $m\leq 4$\,:

	\begin{prop}\label{P:star4} Let $B\subset\PP\HH^0(\PP^{n+1},\mathcal{O}(3))$ be the open subset
		parameterizing smooth cubic hypersurfaces of dimension~$n$, and let $\YY \to B$ be the universal
		family. For all $m\leq 4$, we have
		$$\operatorname{GDCH}_B^*(Y^m) = \langle p_i^*h, p_{j,k}^*\Delta_Y\rangle,$$
		where $p_i : Y^m \to Y$ and $p_{j,k}: Y^m \to Y\times Y$ are the various projections. Here, $ \operatorname{GDCH}^\ast_B(Y^m)$ is as in~\eqref{E:Fm}.
	\end{prop}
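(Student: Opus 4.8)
The plan is to obtain this as an essentially immediate consequence of Proposition~\ref{strat}, applied to the pair $(P,E)=(\PP^{n+1},\mathcal{O}_{\PP^{n+1}}(3))$. Here $P=\PP^{n+1}$ has trivial Chow groups, $E=\mathcal{O}(3)$ is a globally generated line bundle, $\bar B=\PP\HH^0(\PP^{n+1},\mathcal{O}(3))$, and the smooth zero loci of sections of $E$ have dimension $(n+1)-1=n=\dim P-\operatorname{rank}E$, so they are exactly the smooth cubic $n$-folds parameterized by $B$. Thus the only hypothesis to check is that $(\PP^{n+1},\mathcal{O}(3))$ satisfies condition $(\star_4)$\,; granting this, Proposition~\ref{strat} yields $\operatorname{GDCH}^*_B(Y^m)=\mathrm{R}^*(Y^m)$ for all $m\le 4$ (using that $(\star_4)$ implies $(\star_k)$ for $k\le 4$), and it only remains to identify $\mathrm{R}^*(Y^m)$ with the subring in the statement.

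For the latter identification: by Definition~\ref{def:tauto}, $\mathrm{R}^*(Y^m)=\langle p_i^*\mathrm{R}^*(Y),\,p_{j,k}^*\Delta_Y\rangle$ with $\mathrm{R}^*(Y)=\langle\operatorname{Im}(\CH^*(\PP^{n+1})\to\CH^*(Y)),\,c_i(T_Y)\rangle$. Since $Y$ is a smooth hypersurface, the exact sequence $0\to T_Y\to T_{\PP^{n+1}}|_Y\to\mathcal{O}_Y(3)\to 0$ exhibits each $c_i(T_Y)$ as a polynomial in $h:=\mathcal{O}_{\PP^{n+1}}(1)|_Y$ (as already noted after Definition~\ref{def:tauto}), while $\operatorname{Im}(\CH^*(\PP^{n+1})\to\CH^*(Y))=\langle h\rangle$. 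Hence $\mathrm{R}^*(Y)=\langle h\rangle$, and therefore $\mathrm{R}^*(Y^m)=\langle p_i^*h,\,p_{j,k}^*\Delta_Y\rangle$, which is precisely the asserted description of $\operatorname{GDCH}^*_B(Y^m)$.

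It remains to verify $(\star_4)$: for any four distinct points $x_1,\dots,x_4\in\PP^{n+1}$, the evaluation map $\HH^0(\PP^{n+1},\mathcal{O}(3))\to\bigoplus_{i=1}^4\mathcal{O}(3)(x_i)$ is surjective. I would do this by hand: for each $i$ and each $j\ne i$ choose a hyperplane through $x_j$ but not through $x_i$ (possible since the points are distinct), and let $F_i$ be the cubic given by the product of the corresponding three linear forms. Then $F_i$ vanishes at every $x_j$ with $j\ne i$ and is nonzero at $x_i$, so the images of $F_1,\dots,F_4$ span $\bigoplus_{i=1}^4\mathcal{O}(3)(x_i)\cong\C^4$. (Alternatively, one may invoke that $\mathcal{O}_{\PP^{N}}(d)$ is $d$-very ample, so that $\HH^0(\mathcal{O}(3))\to\HH^0(\mathcal{O}(3)|_Z)$ is onto for every length-$\le 4$ subscheme $Z$.) This checks the hypothesis of Proposition~\ref{strat} and finishes the proof.

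The argument carries no serious obstacle: the real work has been done in Proposition~\ref{strat} and in the stratified-projective-bundle and excess-intersection inputs it relies on. The only point deserving a moment's care is that $(\star_4)$ must hold for \emph{every} configuration of four points, including collinear ones, and the hyperplane construction above handles all configurations uniformly.
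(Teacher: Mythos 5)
Your proposal is correct, and it follows the paper's route: both proofs consist of invoking Proposition~\ref{strat} for the pair $(\PP^{n+1},\mathcal{O}(3))$ and then verifying condition $(\star_4)$ (the identification $\mathrm{R}^*(Y)=\langle h\rangle$, which you spell out via the normal bundle sequence, is left implicit in the paper). The only divergence is in how $(\star_4)$ is checked: the paper reduces to quadruples of collinear points by observing that the closed $\operatorname{PGL}_{n+2}$-orbits in the complement of the big diagonals parameterize collinear configurations, and then uses surjectivity of $\HH^0(\PP^{n+1},\mathcal{O}(3))\to\HH^0(\PP^1,\mathcal{O}(3))\to\bigoplus_i\C_{x_i}$ for the spanning line; your product-of-hyperplanes construction (or the appeal to $3$-very ampleness) handles all configurations of four distinct points uniformly and is, if anything, more elementary, avoiding the orbit-closure argument altogether. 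Both verifications are valid, so the proof goes through as written.
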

	
	\begin{proof} In view of Proposition~\ref{strat},
		we simply check that $(\PP^{n+1},\OO(3))$ satisfies the condition $(\star_4)$. 
		Let $\Delta_{i,j}:= p_{i,j}^{-1}(\Delta_{\PP^{n+1}})$ be a big diagonal in $(\mathbb{P}^{n+1})^4$.
		Since all the closed orbits of the natural action of $\operatorname{PGL}_{n+2}$ on $(\PP^{n+1})^{4}\backslash (\bigcup_{i,j} \Delta_{i,j})$ parameterize four collinear points, we only need to check $(\star_4)$ for four collinear points 
		$x_1,\ldots, x_4$. 
		In this case, the needed surjectivity follows from surjectivity of the restriction
		and the evaluation
		$$\HH^0(\PP^{n+1},\mathcal O(3)) \twoheadrightarrow \HH^0(\PP^1 ,\mathcal O(3))
		\twoheadrightarrow \bigoplus_{i=1}^4 \C_{x_i}\ ,$$
		where $\PP^1$
		is the line containing these points. 
	\end{proof}

	\begin{proof}[Proof of Theorem~\ref{thm2:Fcubic}]
		By Proposition \ref{P:star4},  when $m\leq 4$, $\operatorname{GDCH}_B^*(Y^m)$ is generated by tautological cycles, \emph{i.e.}~$\operatorname{GDCH}_B^*(Y^m) = \mathrm{R}^*(Y^m)$.  We then note that Corollary~\ref{cor:TautoCubic} and Remark~\ref{rmk:Cubic3and5folds} give the injectivity of ${\mathrm{R}}^\ast(Y^m)\to \HH^{2*}(Y^m,\QQ)$	for all $m$ if $n=1, 2, 3, 5$, for $m\leq 45$ if $n$ is even and for $m\leq 171$ if $n$ is odd (indeed, for $n>2$ even we have $b_{\mathrm{pr}}(Y)\ge 22$ and for $n>5$ odd we have $b_{\mathrm{pr}}(Y)\ge 170$, see \emph{e.g.}~\cite[Corollary 1.11]{Huy}).
	\end{proof}

\subsection{The Franchetta property and the cancellation property for Chow motives}
Due to Theorem~\ref{thm2:Fcubic}, the following motivic proposition applies to cubic hypersurfaces.

\begin{prop}\label{P:cancellation}
Let $\mathcal X \to B$ be a smooth projective family parameterized by a smooth quasi-projective variety $B$. Let $X :=\mathcal X_b$ be a closed fiber and consider the additive thick subcategory $\MM_X$ of $\MM_{\mathrm{rat}}$ generated by the Tate twists $\h(X)(n)$, $n\in \mathbb Z$, with morphisms given by generically defined correspondences.
 Assume that $X$ satisfies the standard conjectures and that $\mathcal X^2_{/B} \to B$ has the Franchetta property.
Then $\MM_X$ is semi-simple. In particular, cancellation holds, \emph{i.e.}, if we have $A\oplus A_{1} \simeq A\oplus A_{2}$ in $\MM_X$, then $A_{1}\simeq A_{2}$ in $\MM_X$.
\end{prop}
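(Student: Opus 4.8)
\emph{Approach.} The plan is to realise $\MM_X$ faithfully in the category $\MM_{\mathrm{num}}$ of numerical motives and then to deduce semi-simplicity from Jannsen's theorem together with the positivity of the intersection form. First I would record the structure of $\MM_X$: it is $\QQ$-linear, pseudo-abelian (being a \emph{thick} subcategory it is closed under direct summands), and stable under the duality $M\mapsto M^\vee$ of $\MM_{\mathrm{rat}}$ --- the dual of $\h(X)(n)$ being $\h(X)(\dim X - n)$, with evaluation and coevaluation built from powers of $h$ and from the class of $\Delta_X$, which is generically defined as the restriction of $\Delta_{\XX/B}\subset\XX\times_B\XX$. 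The Franchetta property for $\XX^2_{/B}\to B$ says precisely that $\operatorname{GDCH}^\ast_B(X\times X)\to\HH^\ast(X\times X,\QQ)$ is injective; since every $\operatorname{Hom}$-group of $\MM_X$ is a $\QQ$-subspace of a finite direct sum of copies of $\operatorname{GDCH}^\ast_B(X\times X)$, all $\operatorname{Hom}$-groups of $\MM_X$ are finite-dimensional --- so $\MM_X$ is a Krull--Schmidt category --- and the tautological functor $\MM_X\to\MM_{\mathrm{hom}}$ to homological motives is faithful. Using that $X$ satisfies the standard conjectures --- the Lefschetz standard conjecture for $X$, hence for $X\times X$ by Kleiman, whence the standard conjecture $D$ (homological equals numerical equivalence) for $X\times X$ --- homological and numerical equivalence agree on $X\times X$ and on all the $\operatorname{Hom}$-groups in play, giving a faithful $\QQ$-linear functor $R\colon\MM_X\to\MM_{\mathrm{num}}$ compatible with duals and with categorical traces; and $\MM_{\mathrm{num}}$ is semi-simple abelian by Jannsen.

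\emph{Main step.} To conclude that $\MM_X$ itself is semi-simple it suffices, $\MM_X$ being Krull--Schmidt, to show that $\operatorname{End}_{\MM_X}(M)$ is a semi-simple $\QQ$-algebra for every object $M$; replacing $M$ by $M\oplus M^\vee$ one reduces to $M$ self-dual. Then $\operatorname{End}_{\MM_X}(M)$ is a subalgebra of the semi-simple algebra $\operatorname{End}_{\MM_{\mathrm{num}}}(RM)$, stable under the anti-involution $\dagger$ coming from the polarisation (duality composed with the Lefschetz operators, which one can take to be generically defined by spreading out over the family the algebraic cycles representing them supplied by the standard conjectures). By the Hodge index theorem for $X$, $\dagger$ is a \emph{positive} anti-involution on $\operatorname{End}_{\MM_{\mathrm{num}}}(RM)$, so its trace form is positive definite on $\operatorname{End}_{\MM_{\mathrm{num}}}(RM)\otimes\RR$, hence also on the subspace $\operatorname{End}_{\MM_X}(M)\otimes\RR$. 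A finite-dimensional $\QQ$-algebra carrying a positive anti-involution is semi-simple: a nonzero radical element $u$ would give $u\,u^\dagger$ in the radical, hence nilpotent, hence of vanishing categorical trace --- a nilpotent correspondence acts nilpotently on cohomology --- contradicting positivity. Thus $\MM_X$ is semi-simple, and the cancellation statement is the standard Krull--Schmidt consequence.

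\emph{Main obstacle.} The step I expect to be the main obstacle is this last one. Faithfulness of $R$ by itself does not force $\MM_X$ to be semi-simple --- a subalgebra of a semi-simple algebra need not be semi-simple --- so the positivity of the intersection form genuinely enters, and one must be careful to work with the polarisation anti-involution $\dagger$, for which positivity holds (Hodge index theorem), rather than with the naive transpose, whose associated trace form is indefinite once $X$ has non-trivial middle cohomology. This is precisely why it matters that the Lefschetz operators can be realised as generically defined correspondences, a point at which the standard conjectures for $X$ are used once more.
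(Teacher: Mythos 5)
Your first steps (faithfulness of $\MM_X\to\MM_{\mathrm{num}}$ via the Franchetta property for $\XX^2_{/B}$ plus the coincidence of homological and numerical equivalence on $X\times X$, finite-dimensionality of the Hom-groups, pseudo-abelianness, Krull--Schmidt and cancellation) agree with the paper. But at the decisive point you diverge: the paper concludes directly by identifying $\MM_X$ with its image $\overline{\MM}_X$ in $\MM_{\mathrm{num}}$ and invoking Jannsen's theorem for the ambient algebras $\End_{\MM_{\mathrm{num}}}(M)$, with no polarisation or positivity entering at all, whereas you insist (reasonably, as a matter of general algebra) that a subalgebra of a semi-simple algebra need not be semi-simple and therefore bring in a Hodge-index positivity argument. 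Having chosen that route, the burden is on you to make it work, and there the proof has a genuine gap.

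The gap is exactly at the step you flag as the main obstacle: the $\dagger$-stability of $\End_{\MM_X}(M)$. Your justification --- that the Lefschetz operators ``can be taken to be generically defined by spreading out over the family the algebraic cycles representing them supplied by the standard conjectures'' --- does not hold up. The standard conjectures are assumed only for the single closed fibre $X=\XX_b$, so they supply algebraic cycles on $X\times X$; such cycles on a special closed fibre do not spread out over $B$ (spreading out produces families of cycles starting from the generic fibre, over an open subset or a generically finite cover of $B$, not from an arbitrary closed fibre). Without stability of the generically defined subalgebra under the positive anti-involution, positivity of the trace form on the ambient algebra gives nothing: the upper-triangular $2\times 2$ matrices inside the full matrix algebra, which is semi-simple and carries the positive transpose involution, are the standard counterexample, and transpose-stability alone (which your subalgebra does have) is not enough, as an indefinite involution admits involution-stable non-semi-simple subalgebras. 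To close the gap one would need either to know that algebraic representatives of the K\"unneth projectors and of the Lefschetz involution are themselves generically defined (this is what actually happens in the paper's applications, where the standard conjectures for $Y$ and $F$ are verified by universal, hence generically defined, correspondences), or to take $b$ very general and argue that these operators, being monodromy-invariant flat classes that are algebraic on the fibre, have generically defined homological (hence numerical) classes after spreading over a generically finite cover and averaging; neither argument appears in your proposal, and the second does not cover an arbitrary closed fibre as in the statement.
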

\begin{proof}
By definition, the objects of $\MM_X$ are of the form $(Z,p,n)$ 
with $Z = \sqcup_i X\times \PP^{n_i}$ for finitely many $n_i \in \mathbb Z_{\geq 0}$, $p\in \End_{\MM_{\mathrm{rat}}}(\h(Z))$ an idempotent and generically defined correspondence, 
and $n$ an integer\,; and the morphisms $$\operatorname{Hom}_{\MM_X}((Z_1,p_1,n_1), (Z_2,p_2,n_2))\subseteq \operatorname{Hom}_{\MM_{\mathrm{rat}}}((Z_1,p_1,n_1), (Z_2,p_2,n_2))$$  are given by the subspace 
consisting of generically defined correspondences.

By the Franchetta property for $\mathcal X^2_{/B} \to B$ and the coincidence of homological and numerical equivalence on $X\times X$, the restriction of the functor $\MM_{\mathrm{rat}} \to \MM_{\mathrm{num}}$ to $\MM_X$ is fully faithful. Let us denote $\overline{\MM}_X$ its essential image. We have to show that $\overline{\MM}_X$ is semi-simple. 
This follows simply from the fact that, for $M\in \overline{\MM}_X$, $\End_{\overline{\MM}_X}(M) $ is a sub-algebra of the algebra $\End_{\MM_{\mathrm{num}}}(M)$ which is semi-simple by the main theorem of \cite{JannsenNumerical}.
\end{proof}

\subsection{Application to the motive of the Fano variety of lines on a smooth cubic hypersurface}
We start with the observation that the Chow--K\"unneth decomposition~\eqref{eq:CKabsolute} for smooth hypersurfaces is generically defined in the following sense.
Let
$$B\subset\PP\HH^0(\PP^{n+1},\mathcal{O}(d))$$ be the open subset
parameterizing smooth hypersurfaces of degree $d$ in $\PP^{n+1}$, and let $\YY \to B$ be the universal
family.  If $H \in \CH^1(\YY)$ denotes the relative
hyperplane section, then the relative correspondences
\begin{equation}\label{eq:CKcubic}
\pi^{2i}_\YY := \frac{1}{d} H^{n-i} \times_B H^i, \quad 2i\neq n, 	\quad	
\text{and} \quad \pi^n_\YY := \Delta_{\YY/B} - 	\sum_{2i\neq n} \pi^{2i}_\YY
\end{equation}
in $\CH^n(\YY \times_B \YY)$ define a relative Chow--K\"unneth decomposition, in the sense 
that they are relative idempotent correspondences
 and their
specializations to any fiber $\YY_b$ over $b \in B$ gives a Chow--K\"unneth
decomposition of $\YY_b$, and in fact restrict to the Chow--K\"unneth decomposition~\eqref{eq:CKabsolute} for all $b\in B$.
Moreover, the idempotent correspondence $\tau$ of~\eqref{E:tau} defining the direct summand $\h^n(Y)_{\prim}$ is also generically defined with respect to the family $\YY \to B$\,; indeed, the relative correspondence 
	\begin{equation}\label{E:tau'}
\tau_\YY := \left\{
\begin{array}{ll}
\pi_\YY^n - \frac{1} {d}  H_1^{n/2}\cdot H_2^{n/2} \quad \text{if} \ n \ \text{even}\\
\pi_\YY^n \ \ \  \qquad \qquad \qquad\qquad \text{if} \ n \ \text{odd} 
\end{array}
\right.
\end{equation}
defines a relative idempotent correspondence whose specialization to any fiber $\YY_b$ over $b\in B$ is the idempotent correspondence $\tau$ of~\eqref{E:tau}.
\medskip

	Let us now focus on the case where $Y$ is a smooth cubic hypersurface in $\PP^{n+1}$. Let $F$ be its Fano variety of lines, which is known to be smooth projective of dimension $2n-4$. As before, we denote  $B\subset\PP\HH^0(\PP^{n+1},\mathcal{O}(3))$  the Zariski open subset
	parameterizing smooth cubic hypersurfaces of dimension~$n$\,; and we denote $\YY \to B$ and $\mathcal F \to B$ the corresponding universal families.
	The first isomorphism in the following theorem is a motivic lifting of \cite{GalkinShinder}. It refines, and gives a new proof of, the main result of~\cite{Lat-Y-FY}.
\begin{thm}\label{thm:FY2}
Notation is as above. 
\begin{enumerate}[$(i)$]
\item We have an isomorphism of Chow motives
\begin{equation}\label{gs}
\h(F)\simeq \Sym^{2}(\h^{n}(Y)_{\prim}(1))\oplus \bigoplus_{i=1}^{n-1}\h^{n}(Y)_{\prim}(2-i)\oplus \bigoplus_{k=0}^{2n-4}\1(-k)^{\oplus a_{k}}\ ,
\end{equation} 
where 
\[
a_{k}=\begin{cases}
\lfloor \frac{k+2}{2}\rfloor &\text{ if } k<n-2 \\
\lfloor \frac{n-2}{2}\rfloor &\text{ if } k=n-2\\
\lfloor \frac{2n-2-k}{2}\rfloor &\text{ if } k>n-2.\\
\end{cases}
\]
\item Denoting $N$ the Chow motive $(Y,\Delta_Y - o\times Y - Y\times o)$ for any choice of point $o\in Y$, we have an isomorphism of Chow motives
$$\qquad \h(F)\oplus \mathds{1}(2-n) \simeq   \mathrm{Sym}^2(N(1)).$$ 
\item We have an isomorphism of Chow motives $$\h(F)(-2) \oplus \h(Y) \oplus \h(Y)(-n) \simeq \mathrm{Sym}^2\h(Y).$$
\item $F$ and $Y$ have canonical Chow--K\"unneth decompositions, and 
\begin{equation}\label{hn-2}
\h^{n-2}(F)\simeq 
\begin{cases}\h^{n}(Y)_{\prim}(1)\oplus \1(-\frac{n-2}{2})^{\oplus \lfloor\frac{n+2}{4}\rfloor}, &\text{ if } $n$ \text{ is even;}\\
\h^{n}(Y)_{\prim}(1)= \h^{n}(Y)(1)&\text{ if } $n$ \text{ is odd},
\end{cases}
\end{equation} 
where the isomorphism is given by $P^{*}: \h^{n}(Y)_{\prim}(1)\to \h^{n-2}(F)$ and in the even case, for the $i$-th copy, $\cdot g^{\frac{n}{2}+1-2i}c^{i-1}: \1(-\frac{n-2}{2}) \to \h^{n-2}(F)$,
where $1\leq i\leq \lfloor\frac{n+2}{4}\rfloor$, $P = \{(y,\ell) \in Y\times F \ \vert\ y\in \ell\}$ is the incidence correspondence and $g:=-c_{1}(\mathcal{E}|_{F})$, $c:=c_{2}(\mathcal{E}|_{F})$ with $\mathcal{E}$ being the rank-2 tautological bundle on the Grassmannian $\Gr(\PP^{1}, \PP^{n+1})$. 
\item If $n=4$, cup-product induces an isomorphism of Chow motives
$$\mathrm{Sym}^2\h^{2}(F)  \stackrel{\simeq}{\longrightarrow} \h^{4}(F).$$
\end{enumerate}
\end{thm}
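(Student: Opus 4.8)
The plan is to exhibit cup-product as a generically defined morphism in the category $\MM_F$ of Proposition~\ref{P:cancellation} and then to invoke the Franchetta property for $\mathcal F\times_B\mathcal F\to B$ to upgrade the classical cohomological statement to one about Chow motives. Since $n=4$, the variety $F$ is a hyper-K\"ahler fourfold of $\mathrm{K3}^{[2]}$-deformation type. The preliminary observation I would record is that the isomorphisms of parts $(i)$ and $(iv)$ are all generically defined with respect to $\mathcal F\to B$ --- they are assembled from the incidence correspondence $P$, the relative projector $\tau_{\mathcal F}$ of~\eqref{E:tau'}, and the classes $g,c$, all of which are generically defined --- so that there are generically defined isomorphisms
$$\h^2(F)\ \simeq\ \h^4(Y)_{\prim}(1)\oplus\mathds{1}(-1)\qquad\text{and}\qquad\h^4(F)\ \simeq\ \mathrm{Sym}^2\big(\h^4(Y)_{\prim}(1)\big)\oplus\h^4(Y)_{\prim}\oplus\mathds{1}(-2).$$
Applying $\mathrm{Sym}^2$ to the first yields a generically defined isomorphism identifying $\mathrm{Sym}^2\h^2(F)$ with the same motive $\mathrm{Sym}^2(\h^4(Y)_{\prim}(1))\oplus\h^4(Y)_{\prim}\oplus\mathds{1}(-2)$. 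In particular, although $\mathrm{Sym}^2\h^2(F)$ is a priori only a direct summand of $\h(F\times F)$, it is isomorphic --- by a generically defined isomorphism --- to a generically defined direct summand of $\h(F)$; hence both $\mathrm{Sym}^2\h^2(F)$ and $\h^4(F)$ are objects of $\MM_F$, and they are abstractly isomorphic there.

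Next I would set up cup-product on the level of motives: it is the correspondence given by the small diagonal $\delta_F\in\CH^{8}(F^3)$, and, precomposing with $\pi_F^2\otimes\pi_F^2$, postcomposing with $\pi_F^4$, and using that $\delta_F$ is $\mathfrak{S}_3$-invariant, it defines a morphism $\mu\colon\mathrm{Sym}^2\h^2(F)\to\h^4(F)$ in $\MM_F$ (the small diagonal and the canonical Chow--K\"unneth projectors of $F$ being generically defined). Let $\phi\colon\mathrm{Sym}^2\h^2(F)\xrightarrow{\ \sim\ }\h^4(F)$ be the isomorphism of the first paragraph, so that $\mu$ is an isomorphism if and only if $\phi^{-1}\circ\mu\in\End_{\MM_F}(\mathrm{Sym}^2\h^2(F))$ is. Now $\mathcal F\times_B\mathcal F\to B$ has the Franchetta property \cite[Theorem~4.2]{FLV-MCK}, and $F$ satisfies the standard conjectures, its motive being a direct summand of a motive of the form $\bigoplus_i\h(Y^2)(l_i)$ for a smooth cubic fourfold $Y$, so that \cite[Lemma~4.2]{Arapura06} applies as in the Remark following Theorem~\ref{thm:NCK3OG}. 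By Proposition~\ref{P:cancellation}, $\MM_F$ is then semisimple and the Betti realization gives an injection of finite-dimensional $\QQ$-algebras $\End_{\MM_F}(\mathrm{Sym}^2\h^2(F))\hookrightarrow\End_\QQ\!\big(\mathrm{Sym}^2\HH^2(F,\QQ)\big)$. Since an element of a finite-dimensional algebra is invertible as soon as it is invertible in a larger algebra (its inverse being a polynomial in it), and since the Betti realization of $\phi$ is automatically an isomorphism, $\phi^{-1}\circ\mu$, and hence $\mu$, is an isomorphism of Chow motives provided the Betti realization of $\mu$ is an isomorphism.

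Finally, the Betti realization of $\mu$ is the cup-product map $\mathrm{Sym}^2\HH^2(F,\QQ)\to\HH^4(F,\QQ)$, and it remains to see that this is bijective. By Verbitsky's theorem on the cohomology ring of a compact hyper-K\"ahler manifold, the cup-product map $\mathrm{Sym}^k\HH^2(F,\QQ)\to\HH^{2k}(F,\QQ)$ is injective for $k\leq\tfrac12\dim_\C F=2$, so in particular $\mathrm{Sym}^2\HH^2(F,\QQ)\hookrightarrow\HH^4(F,\QQ)$; and since $F$ has the Betti numbers of $\mathrm{K3}^{[2]}$ --- alternatively, counting ranks in \eqref{gs} --- we have $b_2(F)=23$ and $b_4(F)=276=\binom{24}{2}=\dim_\QQ\mathrm{Sym}^2\HH^2(F,\QQ)$, whence this injection is an isomorphism.

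To summarize the structure of the argument: the only genuinely non-formal ingredients are this last cohomological input (the Verbitsky injectivity together with the Betti-number count) and the bookkeeping of the first paragraph, namely checking that the relevant isomorphisms are generically defined and that $\mathrm{Sym}^2\h^2(F)$ really lands inside $\MM_F$ (this is the point where it matters that the $\h^4(Y)_{\prim}^{\otimes 3}$- and $\h^4(Y)_{\prim}^{\otimes 4}$-parts of $\h(F\times F)$ do not occur in $\mathrm{Sym}^2\h^2(F)$). Once those are in hand, the passage from cohomology to Chow motives is automatic --- it is exactly what the Franchetta property for $F\times F$ is there to supply --- and I expect that bookkeeping to be the main, if modest, obstacle.
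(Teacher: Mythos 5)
Your proposal addresses only part $(v)$ of the theorem, and it does so by assuming parts $(i)$ and $(iv)$: the very first step is to ``record that the isomorphisms of parts $(i)$ and $(iv)$ are generically defined'' and to use their degree-$2$ and degree-$4$ pieces to identify $\mathrm{Sym}^2\h^2(F)$ and $\h^4(F)$ with generically defined summands of motives built from $Y$. But those parts are the substance of the statement you were asked to prove. In the paper, $(i)$ is obtained from the generically defined Galkin--Shinder--Voisin isomorphism \eqref{E:isoGSV} by cancelling the $\h(F)$-summands, and the cancellation is only legitimate after Proposition~\ref{P:cancellation} has been applied to $X=Y\times Y\sqcup F$; that application requires verifying the standard conjectures for $F$ and the Franchetta property not just for $\mathcal F\times_B\mathcal F$ but also for $\YY^4_{/B}$ (Theorem~\ref{thm2:Fcubic}) and for $\mathcal F\times_B\YY\times_B\YY$ (deduced via a generically defined split injection $\h(F)\hookrightarrow\h(Y^{[2]})(2)$). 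None of this, nor any argument for $(ii)$, $(iii)$, $(iv)$, appears in your proposal, and since your proof of $(v)$ feeds on \eqref{gs} and \eqref{hn-2} to place $\mathrm{Sym}^2\h^2(F)$ inside a category where the Franchetta property controls Hom-groups, the dependence is essential rather than cosmetic. As it stands the proposal is therefore circular/incomplete as a proof of the theorem.

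For part $(v)$ taken in isolation, your route is essentially the paper's intended one: cup-product is a generically defined morphism, the relevant endomorphism algebra injects into cohomology by the Franchetta property for $\mathcal F\times_B\mathcal F$ together with the standard conjectures for $F$ (full faithfulness into numerical motives), and the cohomological input (Verbitsky's injectivity plus $b_2(F)=23$, $b_4(F)=276$) finishes; your remark that invertibility in a larger finite-dimensional algebra forces invertibility in the subalgebra is a correct way to conclude. One caveat even here: the category $\MM_F$ of Proposition~\ref{P:cancellation} literally consists of summands of sums of twists of $\h(F)$ cut out by generically defined projectors, so $\mathrm{Sym}^2\h^2(F)$, a summand of $\h(F\times F)$, is not an object of it; you must first transport along the generically defined isomorphism onto $\h^4(F)$ (or work in $\MM_X$ with $X=Y\times Y\sqcup F$), and that transporting isomorphism is again exactly parts $(i)$/$(iv)$. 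Supply the cancellation argument and the three Franchetta verifications that give $(i)$--$(iv)$, and your argument for $(v)$ then goes through.
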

\begin{proof} 

Our starting point is the isomorphism of Chow motives
\begin{equation}\label{E:isoGSV}
\bigoplus_{i=0}^n \h(Y)(-i) \oplus \h(F)(-3) \oplus \h(F)(-2)^{\oplus 2} \oplus \h(F)(-1)
  \simeq
\h(Y^{[2]}) \oplus  \h(F)(-3) \oplus \h(F)(-2) \oplus \h(F)(-1),
\end{equation}
which can be obtained by applying the blow-up formula and the projective bundle formula for Chow motives to the construction due to Galkin--Shinder~\cite{GalkinShinder} and Voisin~\cite{VoisinJEMS}\,; see \cite[Diagram (15) and Equation~(16)]{FLV-MCK}.

We wish to apply Proposition \ref{P:cancellation} to
\[ X := Y\times Y \sqcup F ,\] 
seen as a fiber of the family $(\YY\times_B \YY) \sqcup \mathcal F \to B$.
Since the Galkin--Shinder--Voisin construction can be done in a relative setting, all arrows in the blow-up formula and the projective bundle formula are generically defined (with respect to $B$), and the above isomorphism~\eqref{E:isoGSV} therefore takes place in $\MM_{X}$ (as defined in the statement of Proposition~\ref{P:cancellation}).
Let us now check that all conditions of Proposition \ref{P:cancellation} are verified for this $X$.

\noindent First, the standard conjectures hold for $X$. We only need to check them for $Y$ and for $F$\,:
	\begin{itemize}
		\item For $Y$, this is elementary.
		\item For $F$, this is either \cite[Corollary 6]{LatRM}
		or alternatively \cite[Corollary~4.3]{FLV-MCK} for a new self-contained proof.
	\end{itemize}
	\item Second, the Franchetta property holds for $X$. It is enough to show it for $\YY^4_{/B}$,  $\mathcal F\times_B \mathcal F$, and  $\mathcal F\times_B \YY\times_{B} \YY$\,:
	\begin{itemize}
		\item For $\YY^4_{/B}$, this is Theorem \ref{thm2:Fcubic}.
		\item For $\mathcal F\times_B \mathcal F$, this was achieved in our previous work \cite[Theorem~4.2]{FLV-MCK}.
		\item For $\mathcal F\times_B \YY\times_{B} \YY$. As in the proof of Proposition~\ref{P:cancellation}, denote $\overline{\MM}_X$ the semi-simple category that is the essential image of ${\MM}_X$ in ${\MM}_{\mathrm{num}}$. Looking at the reduction modulo numerical equivalence of the isomorphism~\eqref{E:isoGSV} (which takes place in $\MM_X$), and using the semisimplicity of $\MM_{\mathrm{num}}$ \cite{JannsenNumerical}, we obtain a split injective morphism
		\[  \h(F) \to \h(Y^{[2]})(2)\ \ \ \hbox{in}\ \overline{\MM}_X.\]
		Using the standard conjectures for $F$ and $Y$, combined with the Franchetta property for $F\times F$ \cite[Theorem~4.2]{FLV-MCK}, this lifts to a split injection of Chow motives
		\[  \h(F)\to \h(Y^{[2]})(2)\ \ \ \hbox{in}\ \MM_{X} .\]
		Hence $\h(F\times Y^2)$ is a direct summand of  $\h(Y^{[2]}\times Y^2)(2)$ via a generically defined correspondence. It follows 
		that the Franchetta property for $\FF\times_B \YY\times_B \YY$ is implied by that for $\YY^m_{/B}$ with $m\le 4$, which is Theorem \ref{thm2:Fcubic}. 
	\end{itemize}
	With all conditions of 
	Proposition~\ref{P:cancellation} verified for $X$, we deduce that the category $\MM_X$ is semi-simple and in particular, the cancellation property holds.
	We obtain \eqref{gs} by cancelling an isomorphic direct summand from both sides of \eqref{E:isoGSV}.
		
	Statement $(\rom2)$ follows from $(\rom1)$, by writing
	\[ N= \h^n(Y)_{\prim}\oplus \bigoplus_{j=1}^{n-1} \1(-j).\]
	
	Likewise, statement $(\rom3)$ follows from $(\rom1)$ by writing
	\[ \h(Y)= \h^n(Y)_{\prim}\oplus \bigoplus_{j=0}^{n}\1(-j) .\]		
	
	For statement $(\rom4)$, 
	we observe that $F$ has a generically defined K\"unneth decomposition\,; using the Franchetta property for $F\times F$, this is a generically defined Chow--K\"unneth decomposition.
The isomorphism \eqref{hn-2} is generically defined and holds true in cohomology
\cite[Proposition~4.8]{FLV-MCK}. As such, the isomorphism \eqref{hn-2} holds in $\overline{\MM}_X\subset \MM_{\mathrm{num}}$. But $\MM_X\to \overline{\MM}_X $ is fully faithful, proving~$(\rom4)$.	

Statement $(\rom5)$ is proven similarly, using as input the well-known fact that cup-product induces an isomorphism $\HH^4(F,\QQ)\cong\mathrm{Sym}^2 \HH^{2}(F,\QQ)$, and the Franchetta property for $F\times F$.
\end{proof}

	\section{The generalized Franchetta conjecture for LLSS eightfolds}\label{S:proof}
	
	Given a smooth cubic fourfold $Y\subset \PP^{5}$ not containing a plane, Lehn--Lehn--Sorger--van Straten \cite{LLSvS} constructed a hyper-K\"ahler eightfold $Z=Z(Y)$ using the twisted cubic curves in $Y$. 
	Let $B$ be the open subset of $\PP\HH^0(\PP^5,\mathcal{O}(3))$ parameterizing smooth cubic fourfolds and let $B^{\circ}$ be the open subset of $B$ parameterizing those not containing a plane. Let $\YY\to B$ be the universal family of cubic fourfolds and $\mathcal{Z}\to B^{\circ}$ be the universal family of LLSS eightfolds.  The following theorem establishes Theorem~\ref{thm2:FranchettaLLSS}.
	
		\begin{thm}\label{thm:FranchettaLLSS}
		Let $\mathcal{C}^{\circ}$ be the moduli stack of smooth cubic fourfolds not containing a plane. Then the universal family of LLSS hyper-K\"ahler eightfolds \cite{LLSvS}, denoted by $\mathcal{Z}\to \mathcal{C}^{\circ}$, satisfies the Franchetta property. 
	\end{thm}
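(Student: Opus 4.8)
The plan is to combine the results of Sections~\ref{S:motive-moduli} and~\ref{S:cubic} via the relative version of the description of LLSS eightfolds as moduli of Bridgeland semistable objects. First, I would recall the fundamental input from \cite{LLMS, LPZ18}\,: for a smooth cubic fourfold $Y$ not containing a plane, the LLSS eightfold $Z(Y)$ is isomorphic to a moduli space $\mathcal{M}_\sigma(\mathcal{A}_Y, v)$ of $\sigma$-semistable objects in the Kuznetsov component $\mathcal{A}_Y$, for a suitable primitive Mukai vector $v$ with $v^2 = 6$ (so $2m = v^2 + 2 = 8$, i.e.\ $m = 3$) and a $v$-generic stability condition $\sigma \in \Stab^\dagger(\mathcal{A}_Y)$. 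Then Theorem~\ref{thm:NCK3} applies and gives that $\h(Z)$ is a direct summand, via some correspondence $\Gamma$, of $\bigoplus_{i=1}^r \h(Y^3)(l_i)$ — in particular $\h(Z)$ is a direct summand of a Tate twist of $\h(Y^4)$ (absorbing the extra factor and the Tate twists into a product with $\PP^N$'s, using that $\h(\PP^N)$ is a sum of Lefschetz motives and that a summand of $\h(Y^3 \times \PP^N)$ is a summand of $\h(Y^4)$ up to twist when $N$ is large enough, or more simply keeping the bound $Y^3$ which already suffices).

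The crucial point, and the reason the bound $m = 3 \le 4$ in Theorem~\ref{thm2:Fcubic} matters, is that \emph{all} the data involved — the construction of the relative moduli space, the universal object, the Mukai vectors, the correspondence $\Gamma$ realizing $\h(\mathcal{Z}) \hookrightarrow \bigoplus_i \h(\YY^3_{/B^\circ})(l_i)$ and its left inverse — can be carried out in families over $B^\circ$. This is exactly the content of the relative theory of Bayer--Lahoz--Macr\`i--Nuer--Perry--Stellari \cite{BLMNPS}\,: there is a relative Kuznetsov component, a relative stability condition, and a relative good moduli space $\mathcal{M}_\sigma(\mathcal{A}_{\YY/B^\circ}, v) \to B^\circ$ which specializes fiberwise to $\mathcal{M}_\sigma(\mathcal{A}_{\YY_b}, v) \cong Z(\YY_b)$, together with a (twisted) universal object. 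Running the proof of Theorem~\ref{thm:NCK3} relatively — the Grothendieck--Riemann--Roch computation of $\Delta_{\mathcal{M}/B^\circ} = c_8(\mathcal{P})$, the ideal filtration $I_k$, Lieberman's formula and the induction $c_i(\mathcal{P}) \in I_{\lfloor i/2 \rfloor}$ — produces relative correspondences. Hence $\Gamma$ and its section are generically defined with respect to $B^\circ$, so that $\mathrm{GDCH}^*_{B^\circ}(Z)$ is a subquotient (cut out by generically defined idempotents and mapped by generically defined correspondences) of $\mathrm{GDCH}^*_{B}(Y^3) \otimes (\text{Tate data})$, compatibly with the cycle class maps.

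With this reduction in hand, the Franchetta property for $\mathcal{Z} \to \mathcal{C}^\circ$ follows formally\,: a generically defined cycle on $Z$ that is cohomologically trivial is sent by the split injection into $\bigoplus_i \CH^*(Y^3)(l_i)$ to a generically defined, cohomologically trivial cycle, which vanishes by Theorem~\ref{thm2:Fcubic} (the case $m = 3$, a fortiori $m \le 4$)\,; since the map is split injective, the original cycle vanishes. One subtlety to address carefully is the passage between the moduli stack $\mathcal{C}^\circ$ and the base $B^\circ = B^\circ \subset \PP\HH^0(\PP^5, \mathcal{O}(3))$, which is a $\mathrm{PGL}_6$-bundle over (an open substack of) $\mathcal{C}^\circ$\,; since $B^\circ \to \mathcal{C}^\circ$ is dominant, the Franchetta property for the family over $B^\circ$ implies it over $\mathcal{C}^\circ$ by \cite[Remark~2.6]{FLV}. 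Another mild point is the possible presence of a Brauer obstruction to the existence of a genuine (as opposed to twisted) relative universal object\,; but as in B\"ulles' argument and its extensions, the class $\ch(\mathcal{E}^\vee)\cdot \sqrt{\operatorname{td}}$ appearing in $\Delta = c_8(\mathcal{P})$ is insensitive to the twist — the twist cancels in $\pi_{1,2}^*\mathcal{E}^\vee \otimes \pi_{2,3}^*\mathcal{E}$ — so the argument goes through verbatim with a twisted universal family, exactly as noted in \S\ref{S:motive-moduli}. The main obstacle, then, is not any single hard computation but rather assembling the relative package correctly\,: checking that \cite{BLMNPS} indeed provides the relative moduli space and (twisted) universal object over $B^\circ$ with the right specialization, and that the proof of Theorem~\ref{thm:NCK3} is genuinely relative — i.e.\ that Lieberman's formula, GRR, and the manipulation of the ideals $I_k$ all make sense for relative correspondences over the smooth quasi-projective base $B^\circ$.
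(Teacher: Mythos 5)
Your proposal follows essentially the same route as the paper: reduce from the stack $\mathcal{C}^{\circ}$ to $B^{\circ}$ via \cite[Remark~2.6]{FLV}, identify $Z(Y)\cong\mathcal{M}_{\sigma}(\mathcal{A}_Y,2\lambda_1+\lambda_2)$ by Li--Pertusi--Zhao, run Theorem~\ref{thm:NCK3} relatively over $B^{\circ}$ using \cite{BLMNPS} to get a generically defined split injection of motives with generically defined left inverse, and conclude by the Franchetta property for powers of the cubic (Theorem~\ref{thm2:Fcubic}) via the obvious commutative diagram. One numerical correction: since $\dim Z=8=2m$ (with $v^2=6$), Theorem~\ref{thm:NCK3} gives $m=4$, not $m=3$, so the split injection lands in $\bigoplus_i\h(Y^{4})(l_i)$ and the case $m=4$ of Theorem~\ref{thm2:Fcubic} is used at full strength --- this is precisely why the paper proves the Franchetta property for \emph{fourth} powers; your argument still closes because you invoke Theorem~\ref{thm2:Fcubic} for all $m\le 4$, but the claim that ``$Y^{3}$ already suffices'' should be dropped.
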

	\begin{proof}
	By \cite[Remark 2.6]{FLV}, it is sufficient to prove the Franchetta property for the universal family $\mathcal{Z}\to B^{\circ}$. For any cubic fourfold $Y$, let $\mathcal{A}_{Y}:=\langle \mathcal{O}_{Y}, \mathcal{O}_{Y}(1), \mathcal{O}_{Y}(2) \rangle^{\perp}$ be the Kuznetsov component of $\D^{b}(Y)$, which is a K3 category. 
	A natural stability condition $\sigma\in \Stab^{\dagger}(\mathcal{A}_{Y})$ is constructed in \cite{BLMS}. The Mukai lattice of $\mathcal{A}_{Y}$ always contains the $A_{2}$-lattice generated by $\lambda_{1}$ and $\lambda_{2}$, where $\lambda_{i}$ is the (cohomological) Mukai vector of $\delta(\mathcal{O}_{L}(i))$ with $\delta\colon \D^{b}(Y)\to \mathcal{A}_{Y}$ the projection functor and $L$  a (any) line in $Y$.
	
	For any cubic fourfold $Y$ not containing a plane, the result of Li--Pertusi--Zhao \cite[Theorem 1.2]{LPZ18} says that the LLSS hyper-K\"ahler eightfold $Z(Y)$ associated to $Y$ is isomorphic to $\mathcal{M}_{\sigma}(\mathcal{A}_{Y}, 2\lambda_{1}+\lambda_{2})$, the moduli space of $\sigma$-stable objects in $\mathcal{A}_{Y}$ with Mukai vector $2\lambda_{1}+\lambda_{2}$ (alternatively, one can reduce the proof of Theorem \ref{thm:FranchettaLLSS} to the very general cubic fourfold $Y$, for which the modular construction of $Z(Y)$ was already done in \cite[Main Theorem]{LLMS}.)
Theorem \ref{thm:NCK3} applied in this special case implies that there exists a split injection of Chow motives\,:
	\begin{equation}\label{eqn:SplitInj}
	\h(Z(Y))\simeq \h(\mathcal{M}_{\sigma}(\mathcal{A}_{Y}, 2\lambda_{1}+\lambda_{2}))\hookrightarrow \bigoplus_{i=1}^{r}\h(Y^{4})(l_{i})
	\end{equation}
	The theory of stability conditions in family has recently been worked out in \cite{BLMNPS}, and as such the isomorphism of Li--Pertusi--Zhao can be formulated in a relative setting. More precisely, the family $\mathcal{Z}\to B^{\circ}$ is isomorphic, as $B^{\circ}$-scheme, to the relative (smooth and projective) moduli space $\mathcal{M}\to B^{\circ}$, whose fiber over $b$ is $\mathcal{M}_{\sigma}(\mathcal{A}_{Y_{b}}, 2\lambda_{1}+\lambda_{2})$. It is also clear from the proof of Theorem \ref{thm:NCK3} that the injection \eqref{eqn:SplitInj}, as well as its left inverse, is generically defined over $B^{\circ}$ and gives rise to the following morphism of 
	 relative Chow motives over $B^{\circ}$ which is fiberwise a split injection\,:
	\begin{equation}\label{eqn:SplitInjFamily}
	\h(\mathcal{Z})\simeq \h(\mathcal{M}) \rightarrow \bigoplus_{i=1}^{r} \h(\mathcal{Y}^{4/{B^{\circ}}})(l_{i}).
	\end{equation}
 	 As a consequence, for any $b\in B^{\circ}$, we have the following commutative diagram
	 \begin{equation}\label{diag:compare}
	 \xymatrix{
	  \operatorname{GDCH}^{*}(Z_{b}) \ar[r]^{\operatorname{cl}} \ar[d]& \HH^{*}(Z_{b}, \Q)\ar[d]\\
	  \bigoplus_{i=1}^{r}\operatorname{GDCH}^{*}(Y_{b}^{4}) \ar[r]^{\operatorname{cl}}& \bigoplus_{i=1}^{r}\HH^{*}(Y_{b}^{4}, \Q),
	 }
	 \end{equation}
	 where $\operatorname{GDCH}^{*}(Z_{b}):=\ima(\CH^{*}(\mathcal{Z})\to \CH^{*}(Z_{b}))$ is the group of generically defined cycles. 
	 In the above diagram \eqref{diag:compare}, the two vertical arrows are injective by \eqref{eqn:SplitInjFamily}, the bottom arrow is injective by Theorem~\ref{thm2:Fcubic}. Therefore the top arrow is also injective, which is the content of the Franchetta property for the family $\mathcal{Z}\to B^{\circ}$.
	\end{proof}

	\section{Further results}\label{S:moduli}
	
	The aim of this section is to show how the results of Section~\ref{S:motive-moduli} also make it possible to establish the Franchetta property and to deduce Beauville--Voisin type results for certain non locally complete families of hyper-K\"ahler varieties. These include certain moduli spaces of sheaves on K3 surfaces (Corollary~\ref{cor:n(g)}), 
	and certain O'Grady tenfolds (Theorem~\ref{thm:OG10}).
	We also include a Beauville--Voisin type result for Ouchi eightfolds (Corollary~\ref{C:ouchi}).
	
	\subsection{The Franchetta property for some moduli spaces of sheaves on K3 surfaces}
	\label{SS:moduliK3}
	
We show how Theorem~\ref{thm:NCK3}  makes it possible to extend our previous results~\cite[Theorems~1.4 and~1.5]{FLV} on the Franchetta property for certain Hilbert schemes of points on K3 surfaces of small genus to the case of certain moduli spaces of sheaves on K3 surfaces of small genus. 
	
	Let $\mathcal{F}_{g}$ be the moduli stack of polarized K3 surfaces of genus $g$ and let $\mathcal{S}\to \mathcal{F}_g$ be the universal family. Denote by $H$ the universal ample line bundle of fiberwise self-intersection number $2g-2$.
	
	\begin{thm}\label{thm:MvS}
		Let $m$ be a positive integer. If $\mathcal{S}^{m}_{/\mathcal{F}_g}\to \mathcal{F}_g$ satisfies the Franchetta property, then for any $r, d, s\in \N$  such that $0\leq d^{2}(g-1)-rs+1\leq  m$ and $\gcd(r, d, s)=1$, the relative moduli space $\mathcal{M}\to \mathcal{F}_g$ of $H$-stable sheaves with primitive Mukai vector $v =(r, dH, s)$ also satisfies the Franchetta property.
	\end{thm}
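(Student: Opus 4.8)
The plan is to mimic exactly the argument used for LLSS eightfolds in Theorem~\ref{thm:FranchettaLLSS}, replacing the Kuznetsov component $\mathcal{A}_Y$ by the (honest) derived category $\D^b(S)$ of a K3 surface, and replacing Theorem~\ref{thm2:Fcubic} by the hypothesis that $\mathcal S^m_{/\mathcal F_g}\to\mathcal F_g$ has the Franchetta property. First I would record the dimension count: for a primitive Mukai vector $v=(r,dH,s)$ with $v^2 = d^2(2g-2) - 2rs$, the moduli space $M_H(S,v)$ of $H$-stable sheaves is, for $H$ a $v$-generic polarization, a smooth projective hyper-K\"ahler variety of dimension $v^2+2 = 2\big(d^2(g-1)-rs+1\big) =: 2m'$ (Mukai, O'Grady, Yoshioka). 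The hypothesis $0\le d^2(g-1)-rs+1\le m$ says precisely that this moduli space is non-empty (the case $m'=0$ being a point) and that $m'\le m$.

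Next I would invoke Theorem~\ref{thm:NCK3} with $\mathcal{A}=\D^b(S)$ (which is 2-Calabi--Yau, being the derived category of a K3 surface, with $^\perp\mathcal{A}=0$): it yields a split injection of Chow motives
\[
\h(M_H(S,v)) \hookrightarrow \bigoplus_{i=1}^{r'} \h(S^{m'})(l_i),
\]
hence, since $m'\le m$, after adding trivial Tate summands a split injection into $\bigoplus_i \h(S^{m})(l_i)$. The key point — exactly as in the proof of Theorem~\ref{thm:FranchettaLLSS} — is that this injection and its left inverse are \emph{generically defined} over $\mathcal F_g$. This requires a relative version of the construction: the relative moduli space $\mathcal M\to\mathcal F_g$ of $H$-stable sheaves with Mukai vector $v$ exists (Mukai, Yoshioka; or one may work over a suitable \'etale cover / the moduli stack and use \cite[Remark~2.6]{FLV} to descend), carries a (twisted) universal sheaf, and the class $\Delta_{\mathcal M} = c_{2m'}(\mathcal P)$ from Markman's formula together with the ideals $I_k$ of B\"ulles' argument are all defined relatively over $\mathcal F_g$; the GRR computation in the proof of Theorem~\ref{thm:NCK3} then goes through fibrewise-coherently. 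This produces a morphism of relative Chow motives over $\mathcal F_g$ that is fibrewise a split injection
\[
\h(\mathcal M)\ \longrightarrow\ \bigoplus_{i=1}^{r'} \h(\mathcal S^{m}_{/\mathcal F_g})(l_i).
\]

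Finally, for a very general fibre one gets a commutative square as in \eqref{diag:compare}: the vertical maps $\operatorname{GDCH}^*(M)\to \bigoplus_i \operatorname{GDCH}^*(S^m)$ and $\HH^*(M)\to\bigoplus_i\HH^*(S^m)$ are injective by the split injection just constructed, and the bottom horizontal cycle class map $\bigoplus_i\operatorname{GDCH}^*(S^m)\to\bigoplus_i\HH^*(S^m)$ is injective by the Franchetta hypothesis on $\mathcal S^m_{/\mathcal F_g}\to\mathcal F_g$; a diagram chase forces the top map $\operatorname{GDCH}^*(M)\to\HH^*(M)$ to be injective, which is the Franchetta property for $\mathcal M\to\mathcal F_g$. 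I expect the only genuine subtlety — the ``hard part'' — to be the relative/generic-definedness bookkeeping when a universal sheaf exists only as a twisted sheaf (when $\gcd$ of the components of $v$ with the pairings does not give a genuine universal family): one handles this either by twisting by a Brauer class pulled back from the base, which does not affect the generically defined correspondences used, or by passing to an \'etale cover of $\mathcal F_g$ over which a universal family exists and then using dominance (\cite[Remark~2.6]{FLV}) to conclude; since $\gcd(r,d,s)=1$ a universal sheaf in fact exists, so this issue is mild.
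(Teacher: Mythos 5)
Your proposal is correct and follows essentially the same route as the paper: apply Theorem~\ref{thm:NCK3} with $\mathcal{A}=\D^b(S)$ to get a generically defined split injection $\h(M_H(S,v))\hookrightarrow\bigoplus_i\h(\mathcal S^m_{/\mathcal F_g})(l_i)$ fibrewise, then conclude by the same commutative-diagram argument as in Theorem~\ref{thm:FranchettaLLSS}, using the Franchetta hypothesis on $\mathcal S^m_{/\mathcal F_g}$. Your extra care about twisted/quasi-universal families and the passage from $S^{m'}$ to $S^m$ is sound bookkeeping that the paper leaves implicit.
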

	\begin{proof}
		For a given K3 surface $S$ of genus $g$, the moduli space $M_{H}(S, v)$ is a projective hyper-K\"ahler variety of dimension $v^{2}+2=d^{2}(2g-2)-2rs+2\leq 2m$. By Theorem \ref{thm:NCK3}, we have the following split injective morphism of Chow motives
		$$\h(\mathcal{M}_{H}(S, v))\hookrightarrow \bigoplus_{i=1}^{r}\h(S^{m})(l_{i}).$$
		It is clear from the proof of Theorem \ref{thm:NCK3} that the above split injective morphism, as well as its left inverse, is generically defined over $\mathcal{F}_g$. We have therefore a morphism of relative Chow motives (over $\mathcal{F}_g$) that is fiberwise a split injection\,:
		\begin{equation}\label{eqn:SplitInjFamily1}
		\h(\mathcal{M})\hookrightarrow \bigoplus_{i=1}^{r}\h(\mathcal{S}^{m}_{/\mathcal{F}_g})(l_{i}).
		\end{equation}
		As a consequence, for any $b\in \mathcal{F}_g$, we have the following commutative diagram
		\begin{equation}\label{diag:compare1}
		\xymatrix{
			\operatorname{GDCH}^{*}(M_{H}(S_{b}, v)) \ar[r]^{\operatorname{cl}}\ar[d]& \HH^{*}(M_{H}(S_{b}, v), \Q)\ar[d]\\
			\bigoplus_{i=1}^{r}\operatorname{GDCH}^{*}(S_{b}^{m}) \ar[r]^{\operatorname{cl}}& \bigoplus_{i=1}^{r}\HH^{*}(S_{b}^{m}, \Q),
		}
		\end{equation}
		where $\operatorname{GDCH}^{*}(M_{H}(S_{b}, v)):=\ima\bigl(\CH^{*}(\mathcal{M})\to \CH^{*}(M_{H}(S_{b}, v))\bigr)$ is the group of generically defined cycles relative to $\FF_g$.
		In the above diagram \eqref{diag:compare1}, the two vertical arrows are injective by \eqref{eqn:SplitInjFamily1}, the bottom arrow is injective by hypothesis. Therefore the top arrow is also injective, which is the content of the Franchetta property for the family $\mathcal{M}\to \mathcal{F}_g$.
	\end{proof}
	
	Combined with \cite[Theorems 1.4 and 1.5]{FLV}, we get their generalization as follows. 
	
	\begin{cor}\label{cor:n(g)}
		Notation is as above. Assume that $2\leq g\leq 12$ and $g\neq 11$. Set the function
		$$m(g)=\begin{cases}
		3 & g=2, 4, \text{or } 5\\
		5 & g=3\\
		2 & g=6, 7, 8, 9, 10, \text{or } 12.
		\end{cases}$$
		Then for any $r, d, s\in \N$  such that $0\leq d^{2}(g-1)-rs+1\leq  m(g)$ and $\gcd(r, d, s)=1$, the relative moduli space $\mathcal{M}\to \mathcal{F}_g$ of $H$-stable sheaves with primitive Mukai vector $v =(r, dH, s)$ satisfies the Franchetta property.\qed
	\end{cor}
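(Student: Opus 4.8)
The plan is to read Corollary~\ref{cor:n(g)} as a direct specialization of Theorem~\ref{thm:MvS}, with the required input on powers of K3 surfaces supplied by \cite[Theorems~1.4 and~1.5]{FLV}. So the first step is to recall, genus by genus in the range $2\le g\le 12$ with $g\ne 11$, the optimal statement proved there\,: the relative fifth power $\mathcal{S}^5_{/\mathcal{F}_3}\to\mathcal{F}_3$ satisfies the Franchetta property, the relative third power $\mathcal{S}^3_{/\mathcal{F}_g}\to\mathcal{F}_g$ does for $g=2,4,5$, and the relative square $\mathcal{S}^2_{/\mathcal{F}_g}\to\mathcal{F}_g$ does for $g=6,7,8,9,10,12$. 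These thresholds are exactly the values recorded by the function $m(g)$, so for each admissible $g$ the hypothesis of Theorem~\ref{thm:MvS} holds with $m=m(g)$.

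Granting this, no further argument is needed beyond the one already carried out in the proof of Theorem~\ref{thm:MvS}\,: for a triple $(r,d,s)$ with $\gcd(r,d,s)=1$ and $0\le d^2(g-1)-rs+1\le m(g)$, the relative moduli space $\mathcal{M}\to\mathcal{F}_g$ of $H$-stable sheaves with primitive Mukai vector $v=(r,dH,s)$ has fibers $M_H(S,v)$ that are hyper-K\"ahler of dimension $v^2+2=d^2(2g-2)-2rs+2\le 2m(g)$\,; Theorem~\ref{thm:NCK3}, applied in the classical case $\mathcal{A}=\D^b(S)$, produces a fiberwise split injection $\h(\mathcal{M})\hookrightarrow\bigoplus_i\h(\mathcal{S}^{m(g)}_{/\mathcal{F}_g})(l_i)$ of relative Chow motives whose left inverse is again generically defined\,; and the Franchetta property then transfers through the commutative square relating $\operatorname{GDCH}^*$ and singular cohomology, just as in diagram~\eqref{diag:compare1}. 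This yields the conclusion directly.

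The only point requiring attention is the bookkeeping of the first paragraph: one must verify that $m(g)$ never exceeds the power for which \cite{FLV} establishes the Franchetta property at that genus, so that Theorem~\ref{thm:MvS} is genuinely applicable with $m=m(g)$. There is no substantial obstacle; given Theorem~\ref{thm:MvS} and the cited results the corollary is formal, which is why its proof amounts to a citation. Should one wish for a cleaner formulation, one could state it as: $\mathcal{M}\to\mathcal{F}_g$ has the Franchetta property whenever $d^2(g-1)-rs+1$ does not exceed the largest $m$ for which $\mathcal{S}^m_{/\mathcal{F}_g}\to\mathcal{F}_g$ is known to satisfy the property.
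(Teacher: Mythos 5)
Your proposal matches the paper's own argument: Corollary~\ref{cor:n(g)} is stated there as an immediate consequence of Theorem~\ref{thm:MvS} combined with \cite[Theorems~1.4 and~1.5]{FLV}, which supply exactly the thresholds encoded by $m(g)$. Your genus-by-genus bookkeeping and the application of the split injection argument are precisely what the paper intends, so the proof is correct and essentially identical.
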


	\subsection{Franchetta property for certain families of hyper-K\"ahler varieties of O'Grady-10 type}

	\begin{thm}\label{thm:OG10}
		Given any $r, d, s\in \N$  such that $d^{2}(g-1)-rs=1$, let $\mathcal{M}\to \mathcal{F}_g$ be the relative moduli space of $H$-stable sheaves with Mukai vector $v =2(r, dH, s)$ and let $\widetilde{\mathcal{M}}\to \mathcal{M}$ be O'Grady's simultaneous crepant resolution.
		
		\begin{enumerate}[(i)]
			\item 	Assume that $\mathcal{S}^{5}_{/\mathcal{F}_g}\to \mathcal{F}_g$ satisfies the Franchetta property.
			Then  $\widetilde{\mathcal{M}}\to \mathcal{F}_g$ satisfies the Franchetta property.
			\item In case $g=3$ (quartic surface case), the family $\widetilde{\mathcal{M}}\to \mathcal{F}_3$ has the Franchetta property. In particular, the Beauville--Voisin conjecture is true for the very general element of $\widetilde{\mathcal{M}}\to \mathcal{F}_3$.
		\end{enumerate}
	\end{thm}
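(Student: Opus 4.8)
The plan is to argue exactly as in the proofs of Theorems~\ref{thm:FranchettaLLSS} and~\ref{thm:MvS}, with Theorem~\ref{thm:NCK3} replaced by Theorem~\ref{thm:K3OG}, which is designed for the O'Grady-10 situation. First I would record that, writing $v_0 := (r,dH,s)$ so that $v = 2v_0$, the hypothesis $d^2(g-1)-rs = 1$ is equivalent to $v_0^2 = d^2(2g-2)-2rs = 2$ (in particular $v_0$ is automatically primitive). Thus, for every K3 surface $S$ of genus $g$, we are precisely in the setting of Theorem~\ref{thm:K3OG} with trivial Brauer class: $M := M_H(S,v)$ is a $10$-dimensional singular symplectic variety, its crepant resolution $\widetilde M$ is a projective hyper-K\"ahler tenfold of O'Grady-10 type, and Theorem~\ref{thm:K3OG} furnishes a split injection of Chow motives $\h(\widetilde M) \hookrightarrow \bigoplus_{i=1}^r \h(S^5)(l_i)$. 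One small preliminary point: Theorem~\ref{thm:K3OG} is stated for Bridgeland moduli spaces, whereas $\mathcal{M}$ is a Gieseker moduli space; this is bridged either by realizing $M_H(S,v)$ together with its O'Grady resolution as a Bridgeland moduli space for a stability condition in the Gieseker chamber \cite{BM14b}, or by noting that the two crepant resolutions are birational hyper-K\"ahler varieties and invoking \cite{Riess-Manuscripta}.

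For part~$(i)$, the next step is to promote the split injection above to a relative statement. As in the proofs of Theorems~\ref{thm:FranchettaLLSS} and~\ref{thm:MvS}, one checks that the projector cutting out $\h(\widetilde M)$, the split injection, and its left inverse are all generically defined with respect to $\mathcal{S}\to\mathcal{F}_g$, so that they assemble into a morphism of relative Chow motives over $\mathcal{F}_g$
\[ \h(\widetilde{\mathcal{M}}) \longrightarrow \bigoplus_{i=1}^{r} \h(\mathcal{S}^5_{/\mathcal{F}_g})(l_i) \]
which is fiberwise a split injection. Given this, for any $b\in\mathcal{F}_g$ one forms the commutative square with corners $\operatorname{GDCH}^*(\widetilde{\mathcal{M}}_b)$, $\HH^*(\widetilde{\mathcal{M}}_b,\QQ)$, $\bigoplus_i\operatorname{GDCH}^*(S_b^5)$ and $\bigoplus_i\HH^*(S_b^5,\QQ)$, exactly as in diagrams~\eqref{diag:compare} and~\eqref{diag:compare1}; the left vertical arrow is injective by the split injection, and if $\mathcal{S}^5_{/\mathcal{F}_g}\to\mathcal{F}_g$ has the Franchetta property then the bottom (cycle class) arrow is injective, hence so is the top one --- which is precisely the Franchetta property for $\widetilde{\mathcal{M}}\to\mathcal{F}_g$.

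For part~$(ii)$, I would specialize to $g=3$ and quote \cite[Theorems~1.4 and~1.5]{FLV} (the input used for Corollary~\ref{cor:n(g)}): the family $\mathcal{S}^m_{/\mathcal{F}_3}\to\mathcal{F}_3$ satisfies the Franchetta property for all $m\le m(3)=5$, in particular for $m=5$. Part~$(i)$ then yields the Franchetta property for $\widetilde{\mathcal{M}}\to\mathcal{F}_3$. The Beauville--Voisin statement follows formally as in \cite[Proposition~2.5]{FLV}: once $\widetilde M$ has the Franchetta property, any $\QQ$-subalgebra of $\CH^*(\widetilde M)$ generated by generically defined cycles injects into $\HH^*(\widetilde M,\QQ)$, and in particular so does the subalgebra generated by a polarization and the Chern classes, so the very general fibre satisfies the Beauville--Voisin conjecture~\cite{V17}.

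The part I expect to require the most care is showing that the motivic splitting of Theorem~\ref{thm:K3OG} spreads out over $\mathcal{F}_g$ --- that is, that every correspondence entering its proof (the Chern classes of the universal object on the relative stable locus, the contributions of the boundary divisors $\widehat{\Omega}$ and $\widehat{\Sigma}$, the blow-up and excess-intersection manipulations) is generically defined. This is exactly where the existence of O'Grady's \emph{simultaneous} crepant resolution over $\mathcal{F}_g$ and of a relative (possibly twisted) universal sheaf is needed; it plays here the role that the relative theory of stability conditions of \cite{BLMNPS} played in the proof of Theorem~\ref{thm:FranchettaLLSS}.
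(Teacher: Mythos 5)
Your proposal is correct and follows essentially the same route as the paper: deduce a relative, generically defined version of the split injection of Theorem~\ref{thm:K3OG} over $\mathcal{F}_g$, conclude $(i)$ by the same commutative-diagram argument as in Theorems~\ref{thm:FranchettaLLSS} and~\ref{thm:MvS}, and obtain $(ii)$ from the Franchetta property for $\mathcal{S}^5_{/\mathcal{F}_3}\to\mathcal{F}_3$ \cite[Theorem~1.5]{FLV} together with \cite[Proposition~2.5]{FLV}. The paper's proof is just a terser version of this, and your extra remarks (primitivity of $v_0$, the Gieseker--Bridgeland comparison, and which correspondences must be generically defined) are sensible elaborations of the same argument.
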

	
	\begin{proof} Statement $(\rom1)$ follows from Theorem \ref{thm:K3OG}, plus the observation that the construction of the split injection of that theorem can be performed relatively over $\FF_g$. Statement $(\rom2)$ follows from~$(\rom1)$, in view of the fact that $\Ss^{5}_{/\mathcal{F}_3}\to \mathcal{F}_3$ has the Franchetta property \cite[Theorem~1.5]{FLV}.
	\end{proof}

\subsection{The Chow ring of Ouchi's eightfolds}
\label{SS:Ouchi}
In \cite{Ou}, Ouchi has constructed certain hyper-K\"ahler eightfolds,  that we call Ouchi eightfolds, as moduli spaces of stable objects on the Kuznetsov component of a very general cubic fourfold \emph{containing} a plane. These form therefore a 19-dimensional family of projective hyper-K\"ahler eightfolds of $K3^{[4]}$-type.
We provide the following Beauville--Voisin type consequence of Theorem \ref{thm:FranchettaLLSS} for Ouchi eightfolds.

\begin{cor}\label{C:ouchi}
	Let $Y$ be a very general cubic fourfold containing a plane, and let $Z=Z(Y)$ be the associated Ouchi eightfold \cite{Ou}.
	Then the $\QQ$-subalgebra
	\[ \langle h, c_j({Z}), [Y]\rangle\ \ \subset\ \CH^\ast(Z) \]
	injects into cohomology, \emph{via} the cycle class map. Here, $h$ is the natural polarization and $Y\subset Z$ is the lagrangian embedding constructed in \cite{Ou}.
\end{cor}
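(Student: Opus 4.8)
The plan is to realise the Ouchi eightfold as a fibre of the \emph{same} relative family of Bridgeland moduli spaces that produces the LLSS eightfolds in Theorem~\ref{thm:FranchettaLLSS}, and then transport the Franchetta property from the open stratum $B^{\circ}$ of cubics not containing a plane to the Noether--Lefschetz divisor $\mathcal{C}_8$ of cubics containing a plane. By Ouchi's construction \cite{Ou}, $Z=Z(Y)$ is isomorphic to a moduli space $\mathcal{M}_\sigma(\mathcal{A}_Y,v)$ of $\sigma$-stable objects in the Kuznetsov component for a primitive Mukai vector with $v^2=6$; as in the proof of Theorem~\ref{thm:FranchettaLLSS}, and after applying a derived autoequivalence of $\mathcal{A}_Y$ if necessary, we may take $v=2\lambda_1+\lambda_2$. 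Running the stability-in-families machinery of \cite{BLMNPS} exactly as in that proof, the relative moduli space $\mathcal{M}:=\mathcal{M}_\sigma(\mathcal{A}_{\mathcal Y/B},2\lambda_1+\lambda_2)\to B^{\flat}$ is a smooth projective family of hyper-K\"ahler eightfolds over an open subset $B^{\flat}$ of the parameter space $B$ of all smooth cubic fourfolds, with $B^{\flat}\supset B^{\circ}$; by \cite{LPZ18} its fibres over $B^{\circ}$ are the LLSS eightfolds, and (after shrinking $B^{\flat}$ inside the very general locus of \cite{Ou}) its fibres over the dense open subset $\mathcal{C}_8^{\circ}:=\mathcal{C}_8\cap B^{\flat}$ are the Ouchi eightfolds.

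Next I would transport the Franchetta property. Theorem~\ref{thm:FranchettaLLSS} gives it for $\mathcal{M}\to B^{\circ}$. Since $B^{\circ}\hookrightarrow B^{\flat}$ is a dominant (dense open) immersion, \cite[Remark~2.6]{FLV} upgrades this to the Franchetta property for $\mathcal{M}\to B^{\flat}$. In particular, for $Y$ a very general cubic fourfold containing a plane, so that $Z=Z(Y)$ is the fibre of $\mathcal{M}\to B^{\flat}$ over the corresponding point of $\mathcal{C}_8^{\circ}$, the cycle class map
$$\operatorname{GDCH}^{*}_{B^{\flat}}(Z)\ \longrightarrow\ \HH^{*}(Z,\QQ),\qquad \operatorname{GDCH}^{*}_{B^{\flat}}(Z):=\operatorname{Im}\bigl(\CH^{*}(\mathcal{M})\to\CH^{*}(Z)\bigr),$$
is injective.

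It then remains to check that the three generators of the $\QQ$-subalgebra in question lie in $\operatorname{GDCH}^{*}_{B^{\flat}}(Z)$. The natural polarization $h$ and the Chern classes $c_j(Z)$ are the fibrewise restrictions of the relative polarization of $\mathcal{M}/B^{\flat}$ and of $c_j(T_{\mathcal{M}/B^{\flat}})$, hence are generically defined. For the class $[Y]\in\CH^{4}(Z)$ of the Lagrangian cubic fourfold, one uses that the embedding $Y\hookrightarrow Z=\mathcal{M}_\sigma(\mathcal{A}_Y,v)$ is the fibre of a relative Lagrangian embedding $\iota\colon\mathcal{Y}\hookrightarrow\mathcal{M}$ over $B^{\flat}$, given by a recipe $y\mapsto(\text{associated object of }\mathcal{A}_Y)$ that is uniform in $Y$ and restricts over $B^{\circ}$ to the Lagrangian embedding of the cubic fourfold into its LLSS eightfold \cite{LLSvS} and over $\mathcal{C}_8^{\circ}$ to the Lagrangian embedding of \cite{Ou}. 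Thus $[Y]=\iota_{*}[\mathcal{Y}]|_{Z}$ is generically defined, so $\langle h,c_j(Z),[Y]\rangle\subseteq\operatorname{GDCH}^{*}_{B^{\flat}}(Z)$, and the corollary follows from the injectivity above.

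The step I expect to be the crux is the compatibility assertion in the first and third paragraphs: that Ouchi's moduli space and his Lagrangian, built in \cite{Ou} from the quadric-surface-bundle picture of a cubic fourfold containing a plane, genuinely agree with the fibre of $\mathcal{M}\to B^{\flat}$ over $\mathcal{C}_8^{\circ}$ together with the restriction of a \emph{single} relative Lagrangian $\iota$ defined over all of $B^{\flat}$ (one needs $[Y]\in\operatorname{GDCH}^{*}_{B^{\flat}}(Z)$, not merely $\operatorname{GDCH}^{*}_{\mathcal{C}_8^{\circ}}(Z)$, since restricting a family does not preserve the Franchetta property). If one prefers to avoid this identification, an alternative is to argue directly over the $19$-dimensional base $\mathcal{B}=\mathcal{C}_8^{\circ}$: Theorem~\ref{thm:NCK3} yields a split injection of Chow motives $\h(Z)\hookrightarrow\bigoplus_i\h(Y^{4})(l_i)$ which is generically defined over $\mathcal{B}$, reducing the corollary to the Franchetta property for $\mathcal{Y}^{4}_{/\mathcal{B}}\to\mathcal{B}$; the latter should follow by running the stratification argument of Proposition~\ref{strat} with $\PP^{5}$ replaced by its blow-up along the fixed plane (so that $\mathrm{Bl}_{P}Y$, whose motive is $\h(Y)$ plus Lefschetz summands, appears in place of $Y$), giving that generically defined cycles on $Y^{4}$ lie in the extended tautological ring $\widetilde{\mathrm{R}}^{\ast}(Y^{4})$, which injects into cohomology by Proposition~\ref{proppowers} since $4\le 5$.
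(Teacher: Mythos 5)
Your main argument is essentially the paper's own proof: the paper likewise realizes the Ouchi eightfold as the fibre, at a very general point of the plane divisor, of the same relative moduli space $\mathcal{M}_{\sigma}(\mathcal{A}_{Y_b},2\lambda_1+\lambda_2)$ over the full parameter space $B$ (citing \cite[Example~32.6]{BLMNPS} for this identification, so no autoequivalence is needed), notes that $h$, $c_j$ and the Lagrangian class $[Y]$ are generically defined with respect to $B$ (i.e.\ specializations of the corresponding classes on LLSS eightfolds), and concludes from Theorem~\ref{thm:FranchettaLLSS} by specialization --- which is exactly your transport of the Franchetta property from $B^{\circ}$ to the larger base together with the ``all fibres'' form of the definition. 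Your alternative argument over $\mathcal{C}_8$ is only a sketch and is not needed; the compatibility of the relative Lagrangian that you flag as the crux is asserted in the paper in the same way, via \cite{Ou} and \cite{BLMNPS}.
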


\begin{proof} Referring to the notation of the proof of Theorem \ref{thm:FranchettaLLSS}, one considers the relative moduli space $\mathcal{M}\to B$, whose fiber over $b\in B$, denoted $M_{b}$, is $\mathcal{M}_{\sigma}(\mathcal{A}_{Y_{b}}, 2\lambda_{1}+\lambda_{2})$.  For $b\in B^{\circ}$, $M_{b}$ is isomorphic to the LLSS eightfold associated to $Y_{b}$ by \cite{LPZ18}\,; while for a very general point $b\in B\backslash B^{\circ}$, $M_{b}$ is isomorphic to the Ouchi eightfold associated to $Y_b$ by \cite[Example~32.6]{BLMNPS}. Moreover, the classes $h$, $c_{j}(M_{b})$ and $[Y_{b}]$ on Ouchi eightfolds are specializations of the corresponding classes on LLSS eightfolds.
	Thanks to Theorem \ref{thm:FranchettaLLSS}, the $\QQ$-subalgebra generated by $h$, $c_j$ and $[Y]$ (which are generically defined with respect to $B$) injects into cohomology. By specialization, the same holds true for Ouchi eightfolds.
\end{proof}

  \bibliographystyle{amsalpha}
	\bibliography{bib}

\end{document}